\newcommand{\reals}{\mathbb{R}}
\newcommand{\argmin}{\mathop{\rm argmin}}
\DeclareTextFontCommand{\texttt}{\ttfamily\upshape}
\newtheorem{observation}{Observation}
\newtheorem{lem}{Lemma}
\newtheorem{coro}{Corollary}
\newtheorem{rem}{Remark}
\newtheorem{defn}{Definition}
\newtheorem{fact}{Fact}
\newtheorem{sublemma}{Lemma}[lem]
\newcommand{\R}{\reals}
\newcommand{\Z}{\mathbb{Z}}
\newcommand{\ep}{\epsilon}
\newcommand{\su}[2]{\mathlarger{\sum\limits_{#1}^{#2}}}
\newcommand{\f}[2]{\dfrac{#1}{#2}}
\newcommand{\ff}[2]{\tfrac{#1}{#2}}
\newcommand{\lt}{\left(}
\newcommand{\rt}{\right)}
\newcommand{\A}{\alpha}
\newcommand{\B}{\beta}
\newcommand{\M}{\lambda}
\newcommand{\X}{\mathcal{X}}
\newcommand{\G}{\nabla}
\newcommand{\grad}{\nabla}
\newcommand{\I}[2]{\mathlarger{\int}\limits_{#1}^{#2}}
\renewcommand{\k}{\kappa}
\newcommand{\ra}{\rightarrow}
\newcommand{\floor}[1]{\left\lfloor#1\right\rfloor}
\newcommand{\ceil}[1]{\left\lceil#1\right\rceil}
\newcommand{\w}{\omega}
\newcommand{\Ll}{\hat{L}}
\newcommand{\epp}{\tilde{\ep}}
\providecommand{\norm}[1]{\left\lVert#1\right\rVert}
\newcommand{\citen}[1]{[\citenum{#1}]}
\newcommand{\citex}[1]{\citen{#1}}
\newcommand{\defeq}{\triangleq}
\newcommand{\qx}{x^*}
\newcommand*{\centerfloat}{%
  \parindent \z@
  \leftskip \z@ \@plus 1fil \@minus \textwidth
  \rightskip\leftskip
  \parfillskip \z@skip}
\newcommand{\xStar}{x^{*}}
\newcommand{\1}{\gamma}
\newcommand{\2}{\mu}
\newcommand{\ind}[1]{^{(#1)}}
\title{Near-Optimal Methods for Minimizing~ \\ 
Star-Convex Functions and Beyond}
\author{%
  Oliver Hinder ~~~ Aaron Sidford ~~~ Nimit S. Sohoni \\
  \small{\emph{Stanford University}} \\
  \small \texttt{\{ohinder, sidford, nims\}@stanford.edu}
}
\begin{document}

\maketitle

\begin{abstract}
In this paper, we provide near-optimal accelerated first-order methods for minimizing a broad class of smooth nonconvex functions that are unimodal on all lines through a minimizer. This function class, which we call the class of smooth \emph{quasar-convex} functions, is parameterized by a constant $\gamma \in (0,1]$: $\gamma = 1$ encompasses the classes of smooth convex and star-convex functions, and smaller values of $\gamma$ indicate that the function can be ``more nonconvex.'' We develop a variant of accelerated gradient descent that computes an $\epsilon$-approximate minimizer of a smooth $\gamma$-quasar-convex function with at most $O(\gamma^{-1} \epsilon^{-1/2} \log(\gamma^{-1} \epsilon^{-1}))$ total function and gradient evaluations. We also derive a lower bound of $\Omega(\gamma^{-1} \epsilon^{-1/2})$ on the worst-case number of gradient evaluations required by any deterministic first-order method, showing that, up to a logarithmic factor, no deterministic first-order method can improve upon ours.

\end{abstract}

\section{Introduction}
\label{sec:intro}

Acceleration \citep{nemirovski1982, nesterov1983method} is a powerful tool for improving the performance of first-order optimization methods. Accelerated gradient descent (AGD) obtains asymptotically optimal runtimes for smooth convex minimization. Furthermore, acceleration has been used to obtain improved rates for stochastic optimization \citep{JohnsonZh13,allen2017katyusha,ghadimi2016accelerated,woodworth2016tight,xu2018pca}, coordinate descent methods \citep{nesterov2012efficiency,fercoq2015accelerated,hanzely2018acd,shalev2014accelerated}, proximal methods \citep{frostig2015regularizing,li2015prox,lin2015universal}, and higher-order optimization
\citep{bubeck2018near, gasnikov18arxiv, jiang2018arxiv}.
Acceleration has also been successful in a wide variety of practical applications, such as image deblurring \citep{beck2009fast} and neural network training \citep{sutskever2013importance}.
In addition, there has been extensive work giving alternative interpretations of acceleration \citex{allen2014linear,bubeck2015geometric,su2014differential}.

More recently, acceleration techniques have been applied to speed up the computation of $\ep$-stationary points (points where the gradient has norm at most $\ep$) of smooth \emph{nonconvex} functions \citep{agarwal2017finding,carmon2017convex,carmon2018accelerated}. In particular, while gradient descent's $O(\ep^{-2})$ rate for finding $\ep$-stationary points of nonconvex functions with Lipschitz gradients is optimal among first-order methods, if higher-order smoothness assumptions are made accelerated methods can improve this to $O(\ep^{-5/3}\log(\ep^{-1}))$ \citep{carmon2017convex}. Further, \cite{carmon2017lower2} shows that under the same assumptions, any dimension-free deterministic first-order method requires at least $\Omega(\ep^{-8/5})$ iterations to compute an $\ep$-stationary point in the worst case. These bounds are significantly worse than the corresponding $O(\ep^{-1/2})$ bound that AGD achieves for smooth convex functions.

Still, in practice it is often possible to find approximate stationary points, and even approximate global minimizers, of nonconvex functions faster than these lower bounds suggest. This performance gap stems from the fairly weak assumptions underpinning these generic bounds. For example, \cite{carmon2017lower2,carmon2017lower} only assume Lipschitz continuity of the gradient and some higher-order derivatives. However, functions minimized in practice often admit significantly more structure, even if they are not convex. For example, under suitable assumptions on their inputs, several popular nonconvex optimization problems, including matrix completion, deep learning, and phase retrieval, display ``convexity-like'' properties, e.g. that all local minimizers are global \citep{bartlett2019gradient,ge2016matrix}. Much more research is needed to characterize structured sets of functions for which minimizers can be efficiently found; our work is a step in this direction.

The class of ``structured'' nonconvex functions that we focus on in this work is the class of functions we term \textit{quasar-convex}. Informally, quasar-convex functions are unimodal on all lines that pass through a global minimizer. This function class is parameterized by a constant $\1 \in (0,1]$, where $\1 = 1$ implies the function is star-convex \citep{nesterov2006cubic} (itself a generalization of convexity), and smaller values of $\1$ indicate the function can be ``even more nonconvex.''\footnote{An example of a practical problem that is
quasar-convex but \emph{not} star-convex is the objective for learning linear dynamical systems (under certain conditions) \cite{weakquasiconvexity}. \mbox{We present numerical experiments for this problem in Appendix \ref{sec:experiments}.}}
 We produce an algorithm that, given a smooth $\1$-quasar-convex function, uses $O(\1^{-1}\ep^{-1/2}\log(\1^{-1}\ep^{-1}))$ function and gradient queries to find an $\epsilon$-optimal point. Additionally, we provide nearly matching query complexity lower bounds of $\Omega(\gamma^{-1}\ep^{-1/2})$ for \textit{any} deterministic first-order method applied to this function class. Minimization on this function class has been studied previously \citep{guminov2017accelerated,nesterov2018primal}; our bounds more precisely characterize its complexity.

\paragraph{Basic notation.} Throughout this paper, we use $\norm{\cdot}$ to denote the Euclidean norm (i.e. $\norm{\cdot}_2$). We say that a function $f : \R^n \ra \R$ is $L$-smooth, or $L$-Lipschitz differentiable, if $\norm{\nabla f(x)-\nabla f(y)} \le L\norm{x-y}$ for all $x,y \in \R^n$. (We say a function is \emph{smooth} if it is $L$-smooth for some $L \in [0, \infty)$.) We denote a minimizer of $f$ by $x^*$, and we say that a point $x$ is ``$\ep$-optimal'' or an ``$\ep$-minimizer'' if $f(x) \le f(x^*) + \ep$. We use `$\log$' to denote the natural logarithm and $\log^+(\cdot)$ to denote $\max\{\log(\cdot), 1\}$.

\subsection{Quasar-Convexity: Definition, Motivation, and Prior Work}
\label{sec:motivate-and-define-quasar}
In this work, we improve upon the state-of-the-art complexity of first-order minimization of \emph{quasar-convex} functions,\footnote{The concept of quasar-convexity was first introduced by \cite{weakquasiconvexity}, who refer to it as `weak quasi-convexity'. We introduce the term `quasar-convexity' because we believe it is linguistically clearer. In particular, `weak quasi-convexity' is a misnomer because it does not subsume quasi-convexity. Moreover, using this terminology, strong quasar-convexity would be confusingly termed `strong weak quasi-convexity.'} which are defined as follows.

\begin{defn}
Let $\1 \in (0,1]$ and let $\xStar$ be a minimizer of the differentiable function $f : \R^{n} \rightarrow \R$.
The function $f$ is \emph{$\1$-quasar-convex} with respect to $x^*$ if for all $x \in \R^n$,
\begin{equation}
\label{eq:qc}
f(\xStar) \ge  f(x) + \frac{1}{\1} \grad f(x)^\top (\xStar-x).
\end{equation}
Further, for $\2 \ge 0$, the function $f$ is \emph{$(\1,\2)$-strongly quasar-convex}\footnote{By Observation \ref{obs:unique}, $x^*$ is unique if $\mu > 0$.} (or \emph{$(\1,\2)$-quasar-convex} for short) if for all $x \in \R^n$,
\begin{equation}
\label{eq:sqc}
f(\xStar) \ge f(x) + \frac{1}{\1} \grad f(x)^\top (\xStar-x) + \frac{\2}{2} \norm{ \xStar -x }^2.
\end{equation}
\label{defn:defns}
\end{defn}
We simply say that $f$ is \emph{quasar-convex} if \eqref{eq:qc} holds for some minimizer $\qx$ of $f$ and some constant $\1 \in (0, 1]$, and \emph{strongly quasar-convex} if \eqref{eq:sqc} holds with some constants $\1 \in (0,1], \mu > 0$. We refer to $\qx$ as the ``quasar-convex point'' of $f$.
Assuming differentiability, in the case $\1 = 1$, condition \eqref{eq:qc} is simply star-convexity \citep{nesterov2006cubic};\footnote{When $\1 = 1$, condition \eqref{eq:sqc} is also known as \textit{quasi-strong convexity} \citep{necoara} or \textit{weak strong convexity} \citep{karimi2016linear}.} if in addition the conditions \eqref{eq:qc} or \eqref{eq:sqc} hold for all $y \in \R^n$ instead of just for $x^*$, they become the standard definitions of convexity or $\2$-strong convexity, respectively \citep{BoydVa04}. Definition~\ref{defn:defns} can also be straightforwardly generalized to the case where the domain of $f$ is a convex subset of $\R^n$ (see Definition~\ref{defn:gen-defns} in \Cref{sec:quasar-structure}). Thus, our definition of quasar-convexity \linebreak generalizes the standard notions of convexity and star-convexity in the differentiable case. Lemma \ref{lem:star_char} in Appendix~\ref{sec:equivs} shows that quasar-convexity is equivalent to a certain ``convexity-like'' condition on line segments to $\xStar$. In Figure~\ref{fig:examples}, we plot example quasar-convex functions.

We say that a one-dimensional function is \textit{unimodal} if it monotonically decreases to its minimizer and then monotonically increases thereafter. As Observation~\ref{obs:unimodalImpliesQuasar} shows, quasar-convexity is closely related to unimodality. Therefore, like the well-known quasiconvexity \citep{quasi} and pseudoconvexity \citep{pseudo}, quasar-convexity can be viewed as an approximate generalization of unimodality to higher dimensions. We remark that beyond one dimension, neither quasiconvexity nor pseudoconvexity subsumes or is subsumed by quasar-convexity. The proof of Observation~\ref{obs:unimodalImpliesQuasar} appears in Appendix~\ref{sec:unimodal-quasar}, and follows readily from the definitions.

\begin{figure}[t]
\center
\includegraphics[height=1.5in]{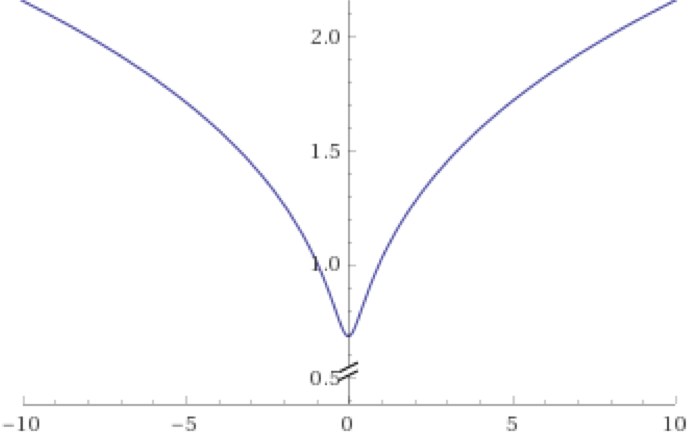}
\includegraphics[height=1.6in]{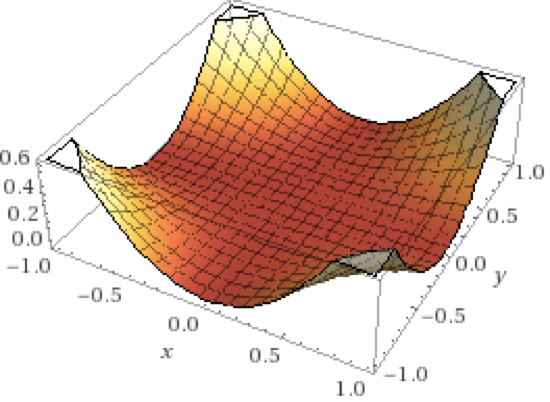}
\includegraphics[height=1.5in]{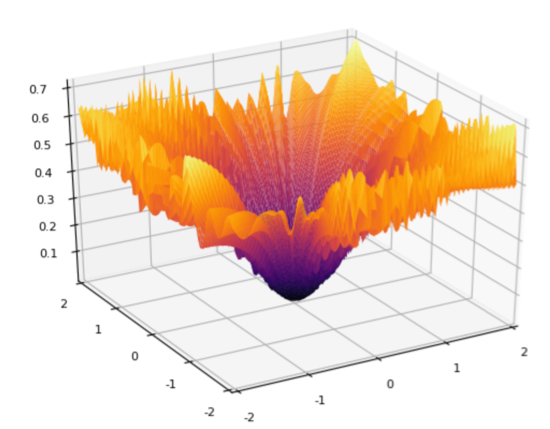}
\caption[Examples of Quasar-Convex Functions]{Examples of quasar-convex functions. [Left: $f(x) = \lt x^2 + \ff{1}{8}\rt^{1/6}$ (quasar-convex with $\gamma=\ff{1}{2}$). Middle: $f(x,y) = x^2y^2$ (star-convex). The rightmost function is described in Appendix \ref{app:quasar_construction}.]}
\label{fig:examples}
\end{figure}
\begin{restatable}{observation}{unimodalImpliesQuasar}\label{obs:unimodalImpliesQuasar}
Let $a < b$ and let $f : [a,b] \rightarrow \R$ be continuously differentiable. The function $f$ is $\1$-quasar-convex for some $\1 \in (0, 1]$ iff $f$ is unimodal and all critical points of $f$ are minimizers. Additionally, if $h : \reals^{n} \rightarrow \reals$ is $\gamma$-quasar-convex with respect to a minimizer $\qx$, then for any $d \in \R^n$ with $\norm{ d } = 1$, the 1-D function $f(\theta) \defeq h(x^{*} + \theta d)$ is $\gamma$-quasar-convex.
\end{restatable}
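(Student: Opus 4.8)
The plan is to prove both directions of the one-dimensional equivalence and then the restriction statement separately, since the restriction part is essentially a one-line verification once the definitions are unpacked.

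\textbf{From quasar-convexity to unimodality.} Suppose $f : [a,b] \to \R$ is $\1$-quasar-convex with respect to a minimizer $x^*$. Fix any $c \in [a,b]$ with $c \notin \argmin f$, so $f(c) > f(x^*)$. Plugging $x = c$ into \eqref{eq:qc} gives $\frac{1}{\1} f'(c)(x^* - c) \le f(x^*) - f(c) < 0$, hence $f'(c)(x^* - c) < 0$. This immediately shows $f'(c) \ne 0$, and moreover that $f'(c) > 0$ when $c > x^*$ and $f'(c) < 0$ when $c < x^*$. Since this holds at every non-minimizing $c$, the sign of $f'$ is determined by position relative to $x^*$; combined with the continuity of $f'$ (which rules out $f'$ changing sign anywhere except possibly at a point where $f$ attains its minimum) a routine argument gives that $f$ is strictly decreasing on $[a, x^*]$ and strictly increasing on $[x^*, b]$ — in particular $\argmin f = \{x^*\}$ and $f$ is unimodal.

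\textbf{From unimodality to quasar-convexity.} Conversely, suppose $f$ is unimodal with a minimizer $x^*$ and $f'(c) \ne 0$ for all $c \notin \argmin f$. I want to exhibit a single $\1 \in (0,1]$ for which \eqref{eq:qc} holds at every $x \in [a,b]$. For $x = x^*$ the inequality is trivial (the gradient term vanishes, or more carefully, $f'(x^*)(x^*-x^*) = 0$). For $x \ne x^*$, unimodality gives $f(x) > f(x^*)$, and the sign condition on $f'$ forces $f'(x)(x^* - x) < 0$ (e.g.\ if $x < x^*$ then $f$ is decreasing there so $f'(x) < 0$ while $x^* - x > 0$). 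Thus the quantity $\1_x \defeq \frac{f'(x)(x^*-x)}{f(x) - f(x^*)}$ is a well-defined positive number, and \eqref{eq:qc} holds at $x$ with parameter $\1$ precisely when $\1 \le \1_x$. So it suffices to show $\inf_{x \ne x^*} \1_x > 0$; we may then take $\1 = \min\{1, \inf_x \1_x\}$. The main obstacle is this infimum: $\1_x$ could a priori degenerate as $x \to x^*$ (where numerator and denominator both vanish) or as $x$ approaches an endpoint. Near $x^*$ one controls the ratio by noting $f(x) - f(x^*) = \int_{x^*}^x f'(t)\,dt$ and comparing with $f'(x)(x - x^*)$; on the compact complement of a neighborhood of $x^*$ the numerator is bounded below by a positive constant (since $f' \ne 0$ there and has a fixed sign) and the denominator is bounded above, so the infimum over that region is positive. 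Combining the two regions gives a uniform positive lower bound, completing this direction. (This is where I'd expect to do the only real work; everything else is bookkeeping.)

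\textbf{The restriction statement.} Finally, let $h : \R^n \to \R$ be $\1$-quasar-convex with respect to $x^*$, fix a unit vector $d$, and set $f(\theta) \defeq h(x^* + \theta d)$. Its minimizer as a one-dimensional function is $\theta^* = 0$ (since $f(0) = h(x^*) \le h(x^* + \theta d) = f(\theta)$ for all $\theta$), and by the chain rule $f'(\theta) = \grad h(x^* + \theta d)^\top d$. Applying \eqref{eq:qc} for $h$ at the point $x = x^* + \theta d$ and noting $x^* - x = -\theta d$ yields
\begin{equation}
f(0) = h(x^*) \ge h(x^* + \theta d) + \frac{1}{\1}\grad h(x^* + \theta d)^\top(-\theta d) = f(\theta) + \frac{1}{\1} f'(\theta)(0 - \theta),
\end{equation}
which is exactly the $\1$-quasar-convexity inequality \eqref{eq:qc} for $f$ with respect to $\theta^* = 0$. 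Hence $f$ is $\1$-quasar-convex, as claimed. (If one wants the statement phrased over all of $\R$ rather than a bounded interval, the argument is identical; if over a bounded interval, one simply restricts $\theta$ to the relevant range and uses the generalized definition referenced in the paper.)
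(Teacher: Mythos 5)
Your proofs of the ``quasar-convex $\Rightarrow$ unimodal'' direction and of the restriction statement track the paper's proofs essentially line by line and are correct, with two small caveats. First, from $\gamma$-quasar-convexity you cannot conclude $\argmin f = \{x^*\}$: a constant function is $\gamma$-quasar-convex for every $\gamma$, yet every point is a minimizer. You only get unimodality plus the derivative-nonvanishing condition, which is all the statement asks for. Second, there is a sign slip in your definition of $\gamma_x$: rearranging \eqref{eq:qc} gives $\gamma \le \frac{f'(x)(x - x^*)}{f(x)-f(x^*)}$, the negative of what you wrote (your ratio as written is $\le 0$ for non-minimizing $x$). This is cosmetic but worth fixing.

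The substantive issue is in the ``unimodal $\Rightarrow$ quasar-convex'' direction, which you sketch but do not prove. You correctly reduce to showing $\inf_{x\ne x^*} \frac{f'(x)(x-x^*)}{f(x)-f(x^*)} > 0$, and your compactness argument for $x$ bounded away from $x^*$ is sound. But for $x$ near $x^*$ you say only that ``one controls the ratio by \dots comparing,'' which is exactly where all the content lies: when $x^*$ is interior, $f'(x^*)=0$, so both numerator and denominator vanish as $x \to x^*$, and mere $C^1$ regularity does not by itself keep the ratio bounded away from zero. The paper addresses this by parametrizing segments: with $g_x(t) \defeq f((1-t)x^* + tx)$ it writes $\frac{f(x)-f(x^*)}{f'(x)(x-x^*)} = \frac{\int_0^1 g_x'(t)\,dt}{g_x'(1)} \le \sup_{t\in[0,1]} \frac{g_x'(t)}{g_x'(1)}$, sets $C_{x^*}$ to be the supremum of this quantity over all $x\in[a,b]$, and claims that the inner supremum is a continuous function of $x$ on the \emph{entire} compact interval $[a,b]$ (with the value at $x = x^*$ defined to be $1$), so that $C_{x^*}<\infty$ and $\gamma = 1/C_{x^*}$ works. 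In other words, the paper's compactness argument is deployed uniformly across $[a,b]$, including the neighborhood of $x^*$; your proposal instead excises that neighborhood and defers it to an unstated argument. To complete your proof you would need to supply a concrete bound on the ratio near $x^*$ (or adopt the paper's continuity-of-the-inner-sup mechanism), and you should examine that step with care: the continuity of $\sup_t g_x'(t)/g_x'(1)$ at $x = x^*$ is not automatic and is the crux of the hard direction.
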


\subsubsection{Related Work}
\label{sec:star-related}
There are several other `convexity-like' conditions in the literature related to quasar-convexity. For example, star-convexity is a condition that relaxes convexity, and is a strict subset of quasar-convexity in the differentiable case. \cite{nesterov2006cubic} introduces this condition when analyzing cubic regularization. \cite{lee2016optimizing} further investigates star-convexity, developing a cutting-plane method to minimize general star-convex functions. Star-convexity is an interesting property because there is some evidence to suggest the loss function of neural networks might conform to this structure in large neighborhoods of the minimizers \citep{kleinberg2018alternative,zhou2019sgd}. Furthermore, under mild assumptions, the objective for learning linear dynamical systems is quasar-convex \cite{weakquasiconvexity}; this problem is closely related to the training of recurrent neural networks. Another relevant class of functions is those for which a small gradient implies approximate optimality. This is known as the Polyak-\L{}ojasiewicz (PL) condition \citep{polyak} and is weaker than strong quasar-convexity \citep{guminov2017accelerated}. For linear residual networks, the PL condition holds in large regions of parameter space \citep{hardt2016identity}.
In addition to pseudoconvexity, quasiconvexity, star-convexity, and the PL condition, other relaxations of convexity or strong convexity include invexity \citex{craven1985invex}, semiconvexity \citex{ngai2007}, quasi-strong convexity \citex{necoara}, restricted strong convexity \citex{zhang2013gradient}, one-point convexity \citex{li2017relu}, variational coherence \citex{zhou2017vc}, the quadratic growth condition \citex{anitescu}, and the error bound property \citex{fabian2010}. A more thorough discussion is provided in \Cref{sec:related-classes}.

We are not the first to study acceleration on quasar-convex functions; recent work by \cite{guminov2017accelerated} and \cite{nesterov2018primal} shows how to achieve accelerated rates for minimizing quasar-convex functions. For a function that is $L$-smooth and $\gamma$-quasar-convex with respect to a minimizer $x^*$, with initial distance to $x^*$ bounded by $R$, the algorithm of \cite{guminov2017accelerated} yields an $\ep$-optimal point in $O(\1^{-1} L^{1/2} R \ep^{-1/2})$ iterations, while the algorithm of \cite{nesterov2018primal} does so in $O(\1^{-3/2} L^{1/2} R\ep^{-1/2})$ iterations. For convex functions (which have $\gamma = 1$), these bounds match the \textit{iteration} bounds achieved by AGD \citep{nesterov1983method}, but use a different oracle model. In particular, to achieve these iteration bounds, the method in \cite{guminov2017accelerated} relies on a low-dimensional subspace optimization method within each iteration, while \cite{nesterov2018primal} uses a one-dimensional line search over the function value in each iteration, as well as a restart criterion that requires knowledge of the true optimal function value.\footnote{We discuss \cite{nesterov2018primal} in more detail in Appendix \ref{app:nesterov}.}
However, quasar-convex functions are not necessarily unimodal along the arbitrary low-dimensional regions or line segments being searched over. Therefore, even finding an approximate minimizer \emph{within} these subregions may be computationally expensive, making \emph{each iteration} potentially costly; by contrast, our methods only require a function and gradient oracle. In addition, neither paper provides lower bounds nor studies the ``strongly quasar-convex'' regime.
Independently, recent work by \cite{sra2019} uses a differential equation discretization to approach the accelerated $O(\k^{1/2}\log(\ep^{-1}))$ rate for minimization of smooth strongly quasar-convex functions in a neighborhood of the optimum, in the special case $\1 = 1$.\footnote{$\k = L/\2$ denotes the \textit{condition number} of an $L$-smooth $(\1,\2)$-strongly quasar-convex function.} Similarly, in the $\1 = 1$ case, geometric descent \citex{bubeck2015geometric} achieves $O(\k^{1/2}\log(\ep^{-1}))$ running times in terms of the number of calls to a one-dimensional line search oracle (although, as previously noted, the number of function and gradient evaluations required may still be large).\footnote{Although this result is not explicitly stated in the literature, upon careful inspection of the analysis in \citex{bubeck2015geometric} it can be observed that the $\2$-strong convexity requirement in \citex{bubeck2015geometric} may be relaxed to the requirement of $(1,\2)$-strong quasar-convexity, with no changes to the algorithm necessary.}

\subsection{Summary of Results}
For functions that are $L$-smooth and $\gamma$-quasar-convex, we provide an algorithm which finds an $\ep$-optimal solution in $O(\1^{-1} L^{1/2} R \ep^{-1/2})$ iterations  (where, as before, $R$ is an upper bound on the initial distance to the quasar-convex point $\qx$). Our iteration bound is the same as that of \cite{guminov2017accelerated}, and a factor of $\1^{1/2}$ better than the $O(\1^{-3/2} L^{1/2} R \ep^{-1/2})$ bound of \cite{nesterov2018primal}. Additionally, we are the first to provide bounds on the total number of \emph{function and gradient evaluations} required; our algorithm uses $O(\1^{-1} L^{1/2} R \ep^{-1/2}\log(\1^{-1} \ep^{-1}))$ evaluations to find a $\ep$-optimal solution.

We also provide an algorithm for $L$-smooth, $(\1,\2)$-strongly quasar-convex functions; our algorithm uses $O(\1^{-1} \kappa^{1/2} \log(\1^{-1}\ep^{-1}) )$ iterations and $O(\1^{-1} \kappa^{1/2} \log(\1^{-1} \kappa) \log(\1^{-1} \ep^{-1}) )$ total function and gradient evaluations to find an $\ep$-optimal point, where $\kappa \defeq L / \mu$ ($\kappa$ is typically referred to as the \textit{condition number}). For constant $\1$, this matches accelerated gradient descent's bound for smooth strongly convex functions, up to a logarithmic factor.

The key idea behind our algorithm is to take a close look at which essential invariants need to hold during the momentum step of AGD, and use this insight to carefully redesign the algorithm to accelerate on general smooth quasar-convex functions. By observing how the function behaves along the line segment between current iterates $x\ind{k}$ and $v\ind{k}$, we show that for any smooth quasar-convex function, there always exists a point $y\ind{k}$ along this segment with the properties needed for acceleration. Furthermore, we show that an efficient binary search can be used to find such a point, \mbox{even without the assumption of convexity along the segment.}

To complement our upper bounds, we provide lower bounds of $\Omega(\1^{-1} L^{1/2}R \ep^{-1/2})$ for the number of gradient evaluations that \textit{any} deterministic first-order method requires to find an $\ep$-minimizer of a quasar-convex function. This shows that up to logarithmic factors, our lower and upper bounds are tight. Our lower bounds extend the techniques from \cite{carmon2017lower2} to the class of smooth quasar-convex functions, allowing an almost exact characterization of the complexity of minimizing these functions.

\paragraph{Paper outline.} In Section~\ref{sec:acceleration_framework}, we provide a general framework for accelerating the minimization of smooth quasar-convex functions. In Section~\ref{sec:algorithms}, we apply our framework to develop specific algorithms tailored to both quasar-convex and strongly quasar-convex functions. In Section~\ref{sec:lb}, we provide lower bounds to show that the upper bounds for quasar-convex minimization of Section~\ref{sec:algorithms} are tight up to logarithmic factors. Full proofs, additional results, and numerical experiments are in the Appendix.
\section{Quasar-Convex Minimization Framework}
\label{sec:acceleration_framework}

In this section, we provide and analyze a general algorithmic template for accelerated minimization of smooth quasar-convex functions. In \Cref{sec:strongly-quasar-convex} we show how to leverage this framework to achieve accelerated rates for minimizing \textit{strongly} quasar-convex functions, and in \Cref{sec:quasar-convex} we show how to achieve accelerated rates for minimizing \textit{non-strongly} quasar-convex functions (i.e. when $\mu = 0$). For simplicity, we assume the domain is $\R^n$.

Our algorithm (Algorithm~\ref{alg:agd}) is a simple generalization of accelerated gradient descent (AGD).
Indeed, standard AGD can be written in the form of Algorithm~\ref{alg:agd}, for particular choices of the parameters $\alpha\ind{k},\beta\ind{k},\eta\ind{k}$.
Given a differentiable function $f \in \R^n \rightarrow \R$ with smoothness parameter $L > 0$ and initial point $x^{(0)} = v^{(0)} \in \R^n$, the algorithm iteratively computes points $x^{(k)}, v^{(k)} \in \R^n$ of improving ``quality.''
However, it is challenging to argue that Algorithm \ref{alg:agd} actually performs optimally \textit{without the assumption of convexity}. The crux of circumventing convexity is to show that there exists a way to efficiently compute the momentum parameter $\alpha^{(k)}$ to yield convergence at the desired rate. In this section, we provide general tools for analyzing this algorithm; in \Cref{sec:algorithms}, we leverage this analysis with specific choices of the parameters $\A^{(k)}, \B$, and $\eta^{(k)}$ to derive our fully-specified accelerated schemes for both quasar-convex and strongly quasar-convex functions.

\begin{algorithm}[H]
\caption{General AGD Framework}
\label{alg:agd}
\SetAlgoLined
\textbf{Input}: $L$-smooth function $f: \R^n \rightarrow \R$, initial point $x^{(0)} \in \R^n$, number of iterations $K$ \\
\BlankLine
Sequences $\{\A^{(k)}\}_{k=0}^{K-1}$, $\{\B\ind{k}\}_{k=0}^{K-1}$, $\{L^{(k)}\}_{k=0}^{K-1}$, $\{\eta^{(k)}\}_{k=0}^{K-1}$
are defined by the particular
\vskip 0ex
algorithm instance, where $\A^{(k)} \in [0,1], \, \B^{(k)} \in [0,1], \, L\ind{k} \in (0, 2L), \, \eta^{(k)} \ge \ff{\1}{L\ind{k}}$.
\BlankLine
\nl Set $v^{(0)} = x^{(0)}$ \\
\nl \For{$k = 0,1,2,\dots,K-1$}
{
\nl Set $y^{(k)} = \A^{(k)} x^{(k)} +(1-\A^{(k)})v^{(k)}$ \\
\nl Set $x^{(k+1)} = y^{(k)} - \frac{1}{L^{(k)}} \grad f( y^{(k)} )$ \quad \# {\scriptsize $L^{(k)}$ computed s.t. $f(x^{(k+1)}) \le f(y\ind{k}) - \ff{1}{2L\ind{k}} \norm{\G f(y\ind{k})}^2$} \\
\nl Set $v^{(k+1)} = \B\ind{k} v^{(k)} + (1-\B\ind{k}) y^{(k)} - \eta^{(k)} \G f ( y^{(k)} )$ \\
}
\nl \Return{$x^{(K)}$}
\BlankLine
\end{algorithm}

\noindent We first define notation that will be used throughout Sections~\ref{sec:acceleration_framework} and \ref{sec:algorithms}:
\begin{defn}
Let  $\ep^{(k)} \defeq f( x^{(k)} ) - f(\qx), \ep^{(k)}_y \defeq f( y^{(k)} ) - f(\qx), r^{(k)} \defeq \norm{v^{(k)} - \qx}^2,
\newline r^{(k)}_y \defeq \norm{y^{(k)} - \qx}^2,
Q^{(k)} \defeq \B\ind{k} \lt 2\eta^{(k)}\A^{(k)} \grad f( y^{(k)} )^\top (x^{(k)} - v^{(k)}) - ( \A^{(k)})^2 (1-\B\ind{k}) \norm{x^{(k)}-v^{(k)}}^2 \rt$.
\end{defn}

In the remainder of this section, we analyze Algorithm~\ref{alg:agd}. We assume that $f$ is $L$-smooth and $(\1, \2)$ strongly quasar-convex (possibly with $\2 = 0$) with respect to a minimizer $x^*$. First, we use Lemma~\ref{lem:agd_one_step} to bound how much the function error of $x^{(k)}$ and the distance from $v^{(k)}$ to $x^*$ decrease at each iteration. 

\begin{restatable}[One Step Framework Analysis]{lem}{agdonestep}
\label{lem:agd_one_step}
Suppose $f$ is $L$-smooth and $(\1,\2)$-quasar-convex with respect to a minimizer $x^*$. Then, in each iteration $k \ge 0$ of Algorithm~\ref{alg:agd} applied to $f$, it is the case that
\[
2 ( \eta^{(k)})^2 L\ind{k} \ep^{(k+1)} +
r^{(k+1)}  \leq \B\ind{k} r^{(k)} + \left[(1 - \B\ind{k}) -  \1 \2 \eta^{(k)}\right] r^{(k)}_y
+ 2 \eta^{(k)} \left[L\ind{k} \eta^{(k)} - \1 \right] \ep^{(k)}_y 
+ Q^{(k)}.
\]
\end{restatable}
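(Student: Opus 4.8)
The plan is to run the standard potential-function argument behind accelerated gradient descent, but to book-keep carefully the extra terms that appear because we only have quasar-convexity (not convexity) and because the momentum update carries the general parameters $\B, \eta^{(k)}$. Concretely, I would track the two quantities $2(\eta^{(k)})^2 L \ep^{(k+1)}$ and $r^{(k+1)}$ directly.

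First I would expand $r^{(k+1)} = \norm{v^{(k+1)} - \qx}^2$ using the update $v^{(k+1)} = \B v^{(k)} + (1-\B) y^{(k)} - \eta^{(k)}\nabla f(y^{(k)})$. Writing $v^{(k+1)}-\qx = \big(\B(v^{(k)}-\qx) + (1-\B)(y^{(k)}-\qx)\big) - \eta^{(k)}\nabla f(y^{(k)})$ and squaring splits $r^{(k+1)}$ into three pieces: the squared convex combination, a cross term with $\nabla f(y^{(k)})$, and $(\eta^{(k)})^2\norm{\nabla f(y^{(k)})}^2$. For the squared convex combination I would apply the identity $\norm{\B a + (1-\B)b}^2 = \B\norm{a}^2 + (1-\B)\norm{b}^2 - \B(1-\B)\norm{a-b}^2$ with $a = v^{(k)}-\qx$, $b = y^{(k)}-\qx$; since $y^{(k)} - v^{(k)} = \A^{(k)}(x^{(k)}-v^{(k)})$ from the definition of $y^{(k)}$, this yields $\B r^{(k)} + (1-\B) r^{(k)}_y - \B(1-\B)(\A^{(k)})^2\norm{x^{(k)}-v^{(k)}}^2$. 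For the cross term I would rewrite the convex combination as $(y^{(k)}-\qx) - \B\A^{(k)}(x^{(k)}-v^{(k)})$, which separates it into $-2\eta^{(k)}\nabla f(y^{(k)})^\top(y^{(k)}-\qx)$ plus $2\B\eta^{(k)}\A^{(k)}\nabla f(y^{(k)})^\top(x^{(k)}-v^{(k)})$. Collecting the two terms involving $x^{(k)}-v^{(k)}$ gives precisely $Q^{(k)}$, so at this stage $r^{(k+1)} = \B r^{(k)} + (1-\B)r^{(k)}_y + Q^{(k)} - 2\eta^{(k)}\nabla f(y^{(k)})^\top(y^{(k)}-\qx) + (\eta^{(k)})^2\norm{\nabla f(y^{(k)})}^2$.

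It remains to bound the last two terms. For $-2\eta^{(k)}\nabla f(y^{(k)})^\top(y^{(k)}-\qx)$, I would invoke $(\1,\2)$-quasar-convexity \eqref{eq:sqc} at the point $x = y^{(k)}$, which rearranges to $\nabla f(y^{(k)})^\top(y^{(k)}-\qx) \ge \1\ep^{(k)}_y + \tfrac{\1\2}{2} r^{(k)}_y$; multiplying by $-2\eta^{(k)}$, and using $\eta^{(k)} \ge \1/L > 0$ so the inequality flips correctly, bounds this term above by $-2\1\eta^{(k)}\ep^{(k)}_y - \1\2\eta^{(k)} r^{(k)}_y$. For $(\eta^{(k)})^2\norm{\nabla f(y^{(k)})}^2$, I would use the descent part of Fact~\ref{fact:smooth_decr} with $x^{(k+1)} = y^{(k)} - \tfrac1L\nabla f(y^{(k)})$, giving $f(x^{(k+1)}) \le f(y^{(k)}) - \tfrac{1}{2L}\norm{\nabla f(y^{(k)})}^2$, i.e. $\norm{\nabla f(y^{(k)})}^2 \le 2L(\ep^{(k)}_y - \ep^{(k+1)})$. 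Substituting both estimates, moving the resulting $-2L(\eta^{(k)})^2\ep^{(k+1)}$ term to the left-hand side, and grouping the coefficients of $r^{(k)}_y$ (into $(1-\B) - \1\2\eta^{(k)}$) and of $\ep^{(k)}_y$ (into $2L(\eta^{(k)})^2 - 2\1\eta^{(k)} = 2\eta^{(k)}[L\eta^{(k)} - \1]$) produces exactly the claimed inequality. I do not expect a genuine obstacle here; the only delicate points are the algebraic bookkeeping that makes the $\B$-dependent terms coalesce into $Q^{(k)}$ and the sign handling when converting the quasar-convexity lower bound into an upper bound on the cross term.
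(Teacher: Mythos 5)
Your proposal is correct and follows essentially the same route as the paper's proof: expand $r^{(k+1)}$ using the $v^{(k+1)}$ update, apply the three-term identity for the squared convex combination, split the cross term so the $x^{(k)}-v^{(k)}$ pieces coalesce into $Q^{(k)}$, then bound the remaining two terms via strong quasar-convexity at $y^{(k)}$ and the descent part of Fact~\ref{fact:smooth_decr}. The only cosmetic difference is that the paper introduces the intermediate point $z^{(k)} = \B v^{(k)} + (1-\B)y^{(k)}$ before squaring, whereas you work directly with $v^{(k+1)}$; the algebra and key inequalities are identical.
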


\newcommand{\agdonestepProof}{
\begin{proof}
Let $z^{(k)} \defeq \beta\ind{k} v^{(k)} + {(1- \beta\ind{k})}y^{(k)}$. Since $v^{(k+1)} = z^{(k)} - \eta^{(k)} \grad f( y^{(k)} )$,
direct algebraic manipulation yields that
\begin{align}
	r^{(k+1)} 
	&= \norm{v^{(k+1)}-\qx}^2 =  \norm{z^{(k)} - \qx - \eta^{(k)} \G f( y^{(k)} )}^2  \nonumber \\
	&= \norm{z^{(k)}-\qx}^2 + 2 \eta^{(k)} \G f( y^{(k)} )^\top (\qx-z^{(k)}) + ( \eta^{(k)})^2 \norm{\G f( y^{(k)} )}^2 ~. \label{eq:agd_framework_1}
\end{align}
Using the definitions of $z^{(k)}$ and $y^{(k)}$, we have
\begin{align}
\norm{z^{(k)}-\qx}^2 &= \B\ind{k} \norm{ v^{(k)}-\qx}^2 + (1-\B\ind{k})\norm{y^{(k)}-\qx}^2 - \B\ind{k}(1-\B\ind{k})\norm{v^{(k)}-y^{(k)}}^2 \nonumber \\
&= \B\ind{k} r^{(k)} + (1-\B\ind{k}) r^{(k)}_y - \B\ind{k}(1-\B\ind{k})(\A^{(k)})^2\norm{v^{(k)}-x^{(k)}}^2~.
\label{eq:agd_framework_2}
\end{align}
Further, since $v^{(k)} = y^{(k)} + \A^{(k)} (v^{(k)} - x^{(k)})$ and $z^{(k)} = \B\ind{k} v^{(k)} + (1 - \B\ind{k})y^{(k)} = y^{(k)} + \A^{(k)} \B\ind{k} (v^{(k)} - x^{(k)})$,
it follows that 
\begin{equation}
\label{eq:agd_framework_3}
\grad f( y^{(k)} )^\top (\qx - z^{(k)})
= \grad f( y^{(k)} )^\top (\qx - y^{(k)})
+ \A^{(k)} \B\ind{k} \grad f( y^{(k)} )^\top (x^{(k)} - v^{(k)}) ~.
\end{equation}
Since $(\1, \2)$-strong quasar-convexity of $f$ implies
$-\ep^{(k)}_y \geq \frac{1}{\1} \grad f( y^{(k)} )^\top (\qx - y^{(k)}) + \frac{\2}{2} r^{(k)}_y$
and the definition of $x^{(k+1)}$ and $L\ind{k}$ implies $0 \le \norm{\grad f( y^{(k)} )}^2 \leq 2L^{(k)} [\ep^{(k)}_y - \ep^{(k+1)}]$,
combining with \eqref{eq:agd_framework_1}, \eqref{eq:agd_framework_2}, and \eqref{eq:agd_framework_3} yields the result.

Note that $L\ind{k}$ in Line 3 of Algorithm~\ref{alg:agd} can be set to the Lipschitz constant $L$ if it is known; otherwise, it can be efficiently computed to make $f(x\ind{k}) = {f(y\ind{k}-\ff{1}{L\ind{k}}\G f(y\ind{k}))} \le f(y\ind{k}) - \ff{1}{2L\ind{k}} \norm{\G f(y\ind{k})}^2$ and $L\ind{k} \in (0, 2L)$ hold using backtracking line search. See Lemma~\ref{lem:stepsize} (Appendix~\ref{sec:onestep-proofs}) for more details.
\end{proof}
}

\agdonestepProof

Lemma \ref{lem:agd_one_step}
provides our main bound on how the error $\ep^{(k)}$ changes between successive iterations of Algorithm~\ref{alg:agd}. The key step necessary to apply this lemma is to relate
$f(y^{(k)})$ and ${\grad f( y^{(k)} )^\top (x^{(k)} - v^{(k)})}$ to $f(x^{(k)})$, in order to bound $Q\ind{k}$. In the standard analysis of accelerated gradient descent, convexity is used to obtain such a connection.\footnote{See Appendix \ref{app:agd-analysis} for more details.} In our algorithms, we instead perform binary search to compute the momentum parameter $\alpha^{(k)}$ for which the necessary relationship holds without assuming convexity. The following lemma
shows that there always exists a setting of $\alpha^{(k)}$ that satisfies the necessary relationship.

\begin{figure}[t]
\center
\includegraphics[height=2in]{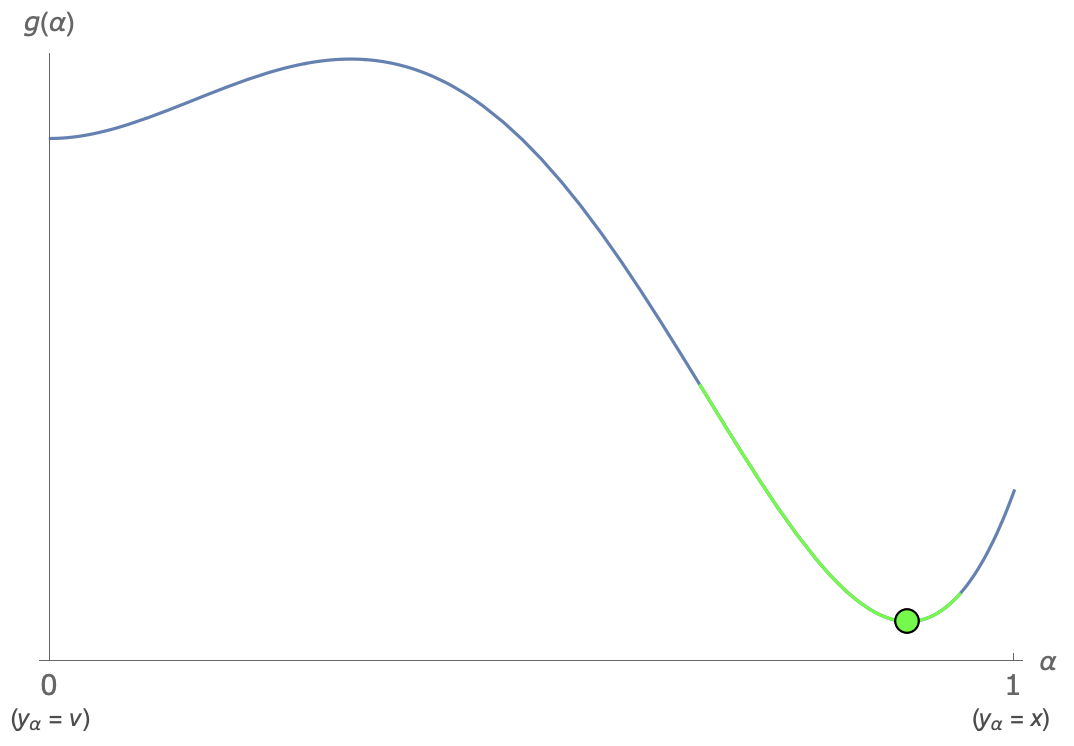}
\caption[Illustration of Lemma \ref{lem:ak_existence}]{Illustration of Lemma \ref{lem:ak_existence}. $g(\A)$ is defined as in the proof of the lemma; here, we depict the case where $g(0) > g(1)$ and $g'(1) > 0$. The points highlighted in green satisfy inequality \eqref{eq:ak_existence}; the circled point has $g'(\A) = 0$ and $g(\A) \le g(1)$. Here $c = 10$.}
\label{fig:lem2}
\end{figure}

\begin{restatable}[Existence of ``Good'' $\A$]{lem}{akexistence}
\label{lem:ak_existence}
Let $f : \R^n \rightarrow \R$ be differentiable and let $x, v \in \R^n$. For $\alpha \in \R$ define $y_\alpha \defeq \alpha x + (1 - \alpha) v$. For any $c \ge 0$ there exists $\alpha \in [0, 1]$ such that 
\begin{equation}
\label{eq:ak_existence}
\alpha \grad f(y_\alpha)^\top (x  - v) \leq c \left[f(x) - f(y_\alpha) \right] ~.
\end{equation}
\end{restatable}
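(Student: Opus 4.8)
The plan is to restrict $f$ to the line segment $[v,x]$ and reduce the claim to an elementary one-dimensional statement about a minimizer of this restriction. Concretely, define $g : [0,1] \to \R$ by $g(\alpha) \defeq f(y_\alpha) = f(\alpha x + (1-\alpha)v)$. Since $f$ is differentiable and $\tfrac{d}{d\alpha} y_\alpha = x - v$, the chain rule gives $g'(\alpha) = \grad f(y_\alpha)^\top(x-v)$, and moreover $g(1) = f(x)$. Hence inequality~\eqref{eq:ak_existence} is exactly the assertion that there exists $\alpha \in [0,1]$ with $\alpha\, g'(\alpha) \le c\,[\, g(1) - g(\alpha)\,]$.

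Next I would choose $\alpha$ to be a global minimizer of $g$ on $[0,1]$; such a point exists because $g$ is continuous and $[0,1]$ is compact. For this choice the right-hand side is automatically nonnegative: $g(\alpha) \le g(1)$ by minimality and $c \ge 0$, so $c\,[\, g(1)-g(\alpha)\,] \ge 0$. It therefore suffices to show that the left-hand side is nonpositive, i.e.\ $\alpha\, g'(\alpha) \le 0$. If $\alpha = 0$ this is immediate. If $\alpha \in (0,1]$, then because $\alpha$ minimizes $g$ over $[0,1]$, for all sufficiently small $t > 0$ we have $g(\alpha) \le g(\alpha - t)$, hence $\tfrac{g(\alpha) - g(\alpha - t)}{t} \le 0$; letting $t \to 0^+$ and using differentiability of $g$ gives $g'(\alpha) \le 0$, so $\alpha\, g'(\alpha) \le 0$ in this case as well. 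Combining, $\alpha\, g'(\alpha) \le 0 \le c\,[\, g(1) - g(\alpha)\,]$, which is the desired inequality.

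I do not expect a genuine obstacle here; the only conceptual step is recognizing that one should look at a minimizer of $f$ along the segment from $v$ to $x$ (rather than a point determined by the endpoint behavior of $g$, as the accompanying figure suggests), after which the argument is just Fermat's first-order condition at an interior minimum together with the one-sided first-order condition at the right endpoint. When presenting it, the one thing to be careful about is the boundary cases $\alpha = 0$ and $\alpha = 1$; the unified observation that $\alpha\, g'(\alpha) \le 0$ at \emph{any} minimizer of $g$ over $[0,1]$ dispatches both at once. Note also that the argument uses neither smoothness, nor quasar-convexity, nor any relationship among $v$, $x$, and a minimizer of $f$ — it holds for arbitrary differentiable $f$ and arbitrary $x,v$, exactly as stated.
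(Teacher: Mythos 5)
Your proof is correct. You and the paper both reduce to the one-dimensional statement $\alpha\, g'(\alpha) \le c\,[g(1) - g(\alpha)]$ with $g(\alpha) \defeq f(y_\alpha)$, but you then take a cleaner, unified route: pick any global minimizer $\alpha$ of $g$ on $[0,1]$, note that $g(\alpha) \le g(1)$ forces the right-hand side to be nonnegative, and observe that the left-hand one-sided first-order condition at a minimizer gives $g'(\alpha) \le 0$ whenever $\alpha > 0$, so $\alpha\, g'(\alpha) \le 0$ in all cases. The paper instead splits into three cases --- $g'(1) \le 0$ (take $\alpha = 1$), $g(0) \le g(1)$ (take $\alpha = 0$), and otherwise invokes a separate fact guaranteeing an interior stationary point $\alpha$ with $g(\alpha) \le g(1)$. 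Your version is shorter and avoids the auxiliary fact; the paper's case split, however, is deliberately structured to mirror the branching logic of the accompanying \texttt{BinaryLineSearch} procedure (which tests $g'(1)$, then $g(0)$ vs.\ $g(1)$, then binary-searches for a near-stationary point), and so serves better as a blueprint for the algorithmic analysis even if it is less economical as a pure existence argument.
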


\newcommand{\akexistproof}{
\begin{proof}
Define $g(\alpha) \defeq f(y_\alpha)$. Then for all $\alpha \in \R$ we have $g'(\alpha) = \grad f(y_\alpha)^\top (x - v)$. Consequently, \eqref{eq:ak_existence} is equivalent to the condition $\alpha g'(\alpha) \leq c [g(1) - g(\alpha)]$.

If $g'(1) \leq 0$, inequality \eqref{eq:ak_existence} trivially holds at $\alpha = 1$; if $f(v) = g(0) \le g(1) = f(x)$, the inequality trivially holds at $\alpha = 0$. If neither of these conditions hold, $g'(1) > 0$ and $g(0) > g(1)$, so Fact \ref{fact:minimizer} {from \Cref{sec:linesearch-proofs}} implies that there is a value of $\A \in (0,1)$ such that $g'(\A) = 0$ and $g(\A) \le g(1)$,
and therefore this value of $\A$ satisfies \eqref{eq:ak_existence}. Figure~\ref{fig:lem2} illustrates this third case graphically.
\end{proof}
}

\akexistproof

\noindent In our algorithms, we will not seek an $\A$ satisfying (\ref{eq:ak_existence}) exactly, but instead $\A \in [0, 1]$ such that
\begin{equation}
\label{eq:ak_existence_2}
\alpha \grad f(y_\alpha)^\top (x  - v) - \alpha^2 b\norm{x-v}^2 \leq c \left[f(x) - f(y_\alpha) \right] + \tilde{\ep}~,
\end{equation}
for some $b,c,\tilde{\ep} \ge 0$. As \eqref{eq:ak_existence_2} is a weaker statement than \eqref{eq:ak_existence}, the existence of $\A$ satisfying \eqref{eq:ak_existence_2} follows from Lemma \ref{lem:ak_existence}. Moreover, we will show how to lower bound the size of the set of points satisfying \eqref{eq:ak_existence_2}, which we use to bound the time required to compute such a point.

We can thus bound the quantity $Q^{(k)}$ from Lemma \ref{lem:agd_one_step} by selecting $\A^{(k)}$ to satisfy \eqref{eq:ak_existence_2} with appropriate settings of $b,c,\tilde{\ep}$, which we do in Lemma \ref{lem:agd_linesearch}.

{\setstretch{0.1}
\begin{restatable}{lem}{agdlinesearch}
\label{lem:agd_linesearch}
If $\B\ind{k} > 0$ and $\A^{(k)} \in [0,1]$ satisfies \eqref{eq:ak_existence_2} with $x = x^{(k)}, v = v^{(k)}, \, b = \ff{1-\B\ind{k}}{2\eta^{(k)}}$, and $c = \ff{L\ind{k}\eta^{(k)}-\1}{\B\ind{k}}$, or if $\B\ind{k} = 0$ and $\A^{(k)} = 1$, then
\begin{equation}
\label{eq:onestep_ineq}
Q\ind{k} \le 2\eta\ind{k}\left[(L\ind{k}\eta\ind{k} - \1 ) \cdot (\ep\ind{k} - \ep\ind{k}_y) + \B\ind{k} \tilde{\ep}  \right].
\end{equation}
\end{restatable}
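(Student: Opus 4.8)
The plan is to observe that inequality \eqref{eq:onestep_ineq} is nothing more than \eqref{eq:ak_existence_2} rescaled, so the proof should be pure bookkeeping with no real obstacle. Concretely, I would take the hypothesis that $\A^{(k)}\in[0,1]$ satisfies \eqref{eq:ak_existence_2} with $x=x^{(k)}$, $v=v^{(k)}$, $b=\ff{1-\B}{2\eta^{(k)}}$, $c=\ff{L\eta^{(k)}-\1}{\B}$, and the same $\tilde\ep$, and note that by construction $y_{\A^{(k)}} = \A^{(k)}x^{(k)} + (1-\A^{(k)})v^{(k)} = y^{(k)}$, while $f(x^{(k)}) - f(y^{(k)}) = \ep^{(k)} - \ep^{(k)}_y$ by the definitions of $\ep^{(k)}$ and $\ep^{(k)}_y$. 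Substituting these in, \eqref{eq:ak_existence_2} reads
\[
\A^{(k)}\,\grad f(y^{(k)})^\top(x^{(k)}-v^{(k)}) - (\A^{(k)})^2\,\tfrac{1-\B}{2\eta^{(k)}}\,\norm{x^{(k)}-v^{(k)}}^2
\;\le\; \tfrac{L\eta^{(k)}-\1}{\B}\bigl(\ep^{(k)}-\ep^{(k)}_y\bigr) + \tilde\ep.
\]

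\textbf{Main step.} Since $\B>0$ and $\eta^{(k)}\ge\ff{\1}{L}>0$, the factor $2\eta^{(k)}\B$ is strictly positive, so multiplying the displayed inequality through by $2\eta^{(k)}\B$ preserves its direction and gives
\[
\B\Bigl(2\eta^{(k)}\A^{(k)}\,\grad f(y^{(k)})^\top(x^{(k)}-v^{(k)}) - (\A^{(k)})^2(1-\B)\norm{x^{(k)}-v^{(k)}}^2\Bigr)
\;\le\; 2\eta^{(k)}\bigl(L\eta^{(k)}-\1\bigr)\bigl(\ep^{(k)}-\ep^{(k)}_y\bigr) + 2\eta^{(k)}\B\tilde\ep.
\]
The left-hand side is exactly $Q^{(k)}$ by its definition, and the right-hand side is $2\eta^{(k)}\bigl[(L\eta^{(k)}-\1)(\ep^{(k)}-\ep^{(k)}_y) + \B\tilde\ep\bigr]$, which is precisely \eqref{eq:onestep_ineq}.

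\textbf{Edge case $\B=0$.} When $\B=0$, the prefactor $\B$ in the definition of $Q^{(k)}$ forces $Q^{(k)}=0$. Meanwhile, with $\A^{(k)}=1$ we get $y^{(k)} = x^{(k)}$, hence $\ep^{(k)}_y = \ep^{(k)}$ and the claimed upper bound $2\eta^{(k)}[(L\eta^{(k)}-\1)(\ep^{(k)}-\ep^{(k)}_y)+\B\tilde\ep]$ also equals $0$, so \eqref{eq:onestep_ineq} holds with equality.

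\textbf{Expected difficulty.} There is essentially no obstacle here: the lemma is a direct algebraic restatement, and the only points requiring a moment's care are (i) checking that the multiplier $2\eta^{(k)}\B$ is nonnegative so the inequality direction is preserved (using $\B>0$ and $\eta^{(k)}\ge\1/L$), and (ii) handling the degenerate $\B=0,\ \A^{(k)}=1$ branch separately as above.
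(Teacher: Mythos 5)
Your proof is correct and follows essentially the same route as the paper: substitute $b=\ff{1-\B}{2\eta^{(k)}}$, $c=\ff{L\eta^{(k)}-\1}{\B}$ into \eqref{eq:ak_existence_2}, multiply through by $2\eta^{(k)}\B>0$, identify the left side with $Q^{(k)}$, and handle $\B=0$ as a degenerate case. The paper's only additional remark, that $L\eta^{(k)}\ge\1$, is just to note $c\ge 0$ as required by \eqref{eq:ak_existence_2}; this is a minor bookkeeping point your write-up omits but does not affect the argument.
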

}

\newcommand{\agdLinesearchProof}{
\begin{proof}
First suppose $\B\ind{k} > 0$. As by definition $y^{(k)} = \A^{(k)} x^{(k)} + (1-\A^{(k)})v^{(k)}$ and $L\ind{k}\eta^{(k)} \ge \1$, applying \eqref{eq:ak_existence_2} yields
\begin{align*}
Q^{(k)} &= 2\B\ind{k} \eta^{(k)} \lt \A^{(k)} \grad f( y^{(k)} )^\top (x^{(k)} - v^{(k)}) - \lt \A^{(k)}\rt^2 \f{(1-\B\ind{k}) \norm{x^{(k)}-v^{(k)}}^2}{2\eta^{(k)}}\rt \\
&\le 2\B\ind{k} \eta^{(k)} \lt \f{L\ind{k} \eta^{(k)} - \1}{\B\ind{k}} [ f(x^{(k)}) - f(y^{(k)})] + \tilde{\ep} \rt \\
&= 2 \eta^{(k)} \lt [L\ind{k} \eta^{(k)} - \1] \cdot [ \ep^{(k)} - \ep^{(k)}_y] + \B\ind{k} \tilde{\ep} \rt~.
\end{align*}

Alternatively, suppose $\B\ind{k} = 0$. Then $Q^{(k)} = 0$ as well; if we select $\A^{(k)} = 1$, then $y^{(k)} = x^{(k)}$ and \eqref{eq:onestep_ineq} trivially holds for any $\tilde{\ep}$, as $\ep^{(k)}_y = \ep^{(k)}$.
\end{proof}
}

\agdLinesearchProof

\noindent Now, in Algorithm~\ref{alg:linesearch} we show how to efficiently compute an $\alpha$ satisfying inequality \eqref{eq:ak_existence_2}.

\newcommand{\lo}{{\normalfont \textbf{lo}}}
\newcommand{\hi}{{\normalfont \textbf{hi}}}
\newcommand{\lhi}{\tau}

\setcounter{AlgoLine}{0}
\begin{algorithm}[H]
\setstretch{0.95}
\SetAlgoLined
\caption{\texttt{BinaryLineSearch}($f, x, v, b, c, \tilde{\ep}, [\text{guess}]$)}
\label{alg:linesearch}
\SetKwInOut{Input}{input}
\textit{Assumptions}: $f$ is $L$-smooth;\, $x,v \in \R^n$;\, $b,c,\tilde{\ep} \ge 0$;\, ``guess'' (optional) is in $[0,1]$ if provided.
\vskip 0ex
Define $g(\A) \defeq f(\A x + (1-\A) v)$ and $p \defeq b\norm{x-v}^2$. \\
\nl \textbf{if} \text{guess provided} \textbf{and }$c g(\text{guess}) + \text{guess} \cdot (g'(\text{guess}) - \text{guess} \cdot p) \le c g(1) + \tilde{\ep}$\, \mbox{\textbf{then return} guess}\\
\nl \lIf{$g'(1) \le \tilde{\ep}+p$}{\Return 1}
\nl \lElseIf{$c = 0\,\,\mathrm{ or }\,\,g(0) \le g(1) + \tilde{\ep} / c$\,}{\Return 0}
\nl $\lhi \leftarrow 1 - g'(1) \, / \, \texttt{BacktrackingSearch}(g, p, 1)$ \quad {\small \# one step of gradient descent on $g$ from 1,
using backtracking to select step size; see Algorithm~\ref{alg:backtracking} for \texttt{BacktrackingSearch} pseudocode} \\
\nl $\lo \leftarrow 0, \hi \leftarrow \lhi, \A \leftarrow \lhi$ \\
\nl \While{$c g(\A) + \A (g'(\A)- \A p) > c g(1) + \tilde{\ep}$}{
\nl $\A \leftarrow (\lo+\hi)/2$ \\
\nl \lIf{$g(\A) \le g(\lhi)$}{$\hi \leftarrow \A$}
\nl \lElse{$\lo \leftarrow \A$}
}
\nl \Return{$\A$}
\BlankLine
\end{algorithm}

The core idea behind Algorithm~\ref{alg:linesearch} is as follows: as in the proof of Lemma \ref{lem:ak_existence}, let $g(\A) \defeq f(\A x + (1-\A)v)$ be the restriction of the function $f$ to the line from $v$ to $x$. If either $g(0) \le g(1)$, or $g$ is decreasing at $\A = 1$, then \eqref{eq:ak_existence} is immediately satisfied. If this does not happen, then $g(0) > g(1)$ but $g'(1) > 0$, which means that $g$ switches from increasing to decreasing at some $\A \in (0,1)$, and so $g'(\A) = 0$. Such a value of $\A$ also satisfies \eqref{eq:ak_existence}. Algorithm~\ref{alg:linesearch} uses binary search to exploit this fact and thereby efficiently compute a value of $\A$ \textit{approximately} satisfying \eqref{eq:ak_existence} (i.e., satisfying \eqref{eq:ak_existence_2}). In Lemma \ref{lem:linesearch}, we bound the maximum number of iterations that Algorithm \ref{alg:linesearch} can take until \eqref{eq:ak_existence_2} holds and it thereby terminates. Lemma \ref{lem:linesearch} is proved in \Cref{sec:linesearch-proofs}.

``guess'' is an optional argument to Algorithm~\ref{alg:linesearch}; if given, the value of ``guess'' will be tested first, and chosen as the value of $\A$ if it satisfies \eqref{eq:ak_existence_2}. For instance, we can use the value of $\A^{(k)}$ prescribed by the standard version of AGD as an initial guess. We discuss this further in Section \ref{sec:guess}.

\begin{restatable}[Line Search Runtime]{lem}{linesearch}
\label{lem:linesearch}
For $L$-smooth $f : \R^n \rightarrow \R$, points $x, v \in \R^n$ and scalars $b,c,\tilde{\ep} \ge 0$, Algorithm~\ref{alg:linesearch} computes $\A \in [0,1]$ satisfying \eqref{eq:ak_existence_2} with at most
\[
8+3\ceil{\log_2^+ \lt (4+c) \min\left\{ \ff{2L^3}{b^3}, \ff{L\norm{x-v}^2}{2\tilde{\ep}} \right\} \rt}
\]
 function and gradient evaluations.
\end{restatable}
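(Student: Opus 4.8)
The plan is to dispatch correctness of the three \texttt{return} statements first, and then reduce everything to bounding the number of iterations of the \texttt{while} loop. Write $g(\A)\defeq f(\A x+(1-\A)v)$, so that $g'(\A)=\grad f(\A x+(1-\A)v)^\top(x-v)$, write $D\defeq\norm{x-v}^2$, and set $\phi(\A)\defeq c\,g(\A)+\A g'(\A)-\A^2 bD$; then the loop runs precisely while $\phi(\A)>c\,g(1)+\epp$, and \eqref{eq:ak_existence_2} (with $x,v$ as given) is exactly $\phi(\A)\le c\,g(1)+\epp$. Correctness is then immediate: at $\A=1$, \eqref{eq:ak_existence_2} reads $g'(1)\le\epp+bD$; at $\A=0$ it reads $0\le c[g(1)-g(0)]+\epp$, which holds when $c=0$ or $g(0)\le g(1)+\epp/c$; and the loop return is the negation of the loop condition. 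Every returned value lies in $[0,1]$ once we observe (below) that $\tau\in(\ff12,1)$, so that any midpoint lies in $[\lo,\hi]\subseteq[0,\tau]\subseteq[0,1]$. We may assume $\epp+bD>0$ (otherwise the claimed bound is infinite) and that neither early return fires (otherwise at most three evaluations are used); in particular $LD>0$.

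Next I would record the estimates available once the loop is entered: $0\le\epp+bD<g'(1)$, $c>0$, and $g(0)>g(1)$. Since $g'$ is $LD$-Lipschitz on the segment, integrating $g'(t)\ge g'(1)-LD(1-t)$ over $[0,1]$ against $g(1)<g(0)$ gives $g'(1)<\ff12 LD$, hence $\tau=1-\ff{\epp+bD}{LD}\in(\ff12,1)$; a second-order Taylor bound at $\A=1$ then yields $g(\tau)<g(1)-\ff{(\epp+bD)^2}{2LD}$; and $|g'(\A)|\le g'(1)+LD<\ff32 LD$ for all $\A\in[0,1]$. For whatever the current $[\lo,\hi]$ is, let $\A^\circ$ be the leftmost point of $[\lo,\hi]$ with $g(\A^\circ)\le g(\tau)$; since $[\lo,\hi]\subseteq[0,\tau]$, $\A^\circ$ is at least the leftmost such point of $[0,\tau]$, and combining $g(0)>g(\tau)$ with the second-order smoothness bound at $0$ and the estimates just derived forces $\A^\circ>\ff{b^2}{4L^2}$; moreover, by continuity from the left (where $g>g(\tau)$), $g(\A^\circ)=g(\tau)$ and $g'(\A^\circ)\le 0$, and $\A^\circ\in(\lo,\hi]$.

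Then I would verify the loop invariant $0\le\lo<\hi\le\tau$ and $g(\lo)>g(\tau)\ge g(\hi)$: it holds initially ($\lo=0$, $\hi=\tau$) and is preserved by each update, which replaces $\lo$ or $\hi$ by the current midpoint according to whether $g(\text{midpoint})>g(\tau)$. The interval length $\hi-\lo$ starts below $1$ and halves each iteration. The core claim is that there is an explicit $\ell>0$ such that once $\hi-\lo\le\ell$ the tested midpoint $m$ — which lies within $\ff12(\hi-\lo)$ of $\A^\circ$ — satisfies $\phi(m)\le c\,g(1)+\epp$, so the loop exits. To see this, plug the smoothness estimates $g(m)\le g(\A^\circ)+|g'(\A^\circ)|\,s+\ff{LD}{2}s^2$ and $g'(m)\le g'(\A^\circ)+LDs$ (with $s=|m-\A^\circ|$) into $\phi(m)$ and use the ``gap'' $c\,g(1)+\epp-c\,g(\tau)>\ff{c(\epp+bD)^2}{2LD}+\epp$: when $m\ge\A^\circ$ the terms carrying $g'(\A^\circ)\le 0$ only help and the $-m^2 bD\le-(\A^\circ)^2 bD$ term contributes via $\A^\circ>\ff{b^2}{4L^2}$; when $m<\A^\circ$ one splits on the size of $|g'(\A^\circ)|$ (small forces $\A^\circ$ to be bounded below more strongly; large makes $m g'(m)$ very negative). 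Carrying this out shows $\ell$ may be taken of order $\big((1+c/2)\min\{L^3/b^3,\,LD/\epp\}\big)^{-1}$, so the number of loop iterations is at most $\ceil{\log_2^+(1/\ell)}$; since at most $5$ evaluations occur before the loop ($g'(1),g(1),g(0),g(\tau),g'(\tau)$) and $2$ per iteration ($g(m)$ and $g'(m)$, reusing $g(\tau)$), the total is at most the claimed bound.

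The hard part will be the very last step: converting the smoothness estimates and the gap into a clean lower bound on $\ell$ of the stated form — in particular obtaining the $\min$ (not a $\max$) of $L^3/b^3$ and $LD/\epp$ with the correct $(1+c/2)$ factor requires a careful case analysis (small versus large $c$; whether $\epp$ or $b^2D$ dominates), leaning on the uniform bound $|g'|<\ff32 LD$, on $\A^\circ>\ff{b^2}{4L^2}$, and on the $-\A^2 bD$ term; the remaining parts (correctness, the preliminary smoothness estimates, the invariant, and the geometric shrinkage of the interval) are routine.
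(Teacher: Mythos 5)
Your overall architecture — verifying the three early returns, establishing an interval invariant, observing that the binary search halves the interval, and showing that the termination condition is met once the interval is narrow enough around a ``witness'' — matches the paper's. The key divergence is your choice of witness. The paper maintains the additional invariant $g'(\hi) > \tilde{\ep}$ and uses Fact~\ref{fact:minimizer} to conclude that, at every iteration, there is a \emph{critical} point $\hat{\A}\in(\lo,\hi)$ with $g'(\hat{\A})=0$ and $g(\hat{\A})\le g(\lhi)$; it then argues via a ``satisfying intervals never get split'' observation. You instead track $\A^\circ$, the leftmost point of $[\lo,\hi]$ at which $g$ crosses level $g(\lhi)$, which has $g(\A^\circ)=g(\lhi)$ but only $g'(\A^\circ)\le 0$.

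That difference is where the gap is, and it is not cosmetic. The paper's lower bound on the witness, $\hat{\A} \ge \ff{p+\tilde{\ep}}{\hat{L}} \ge \ff{b}{L}$ where $p=b\norm{x-v}^2$ and $\hat L = L\norm{x-v}^2$, comes precisely from $g'(\hat{\A})=0$: the smoothness inequality at $\hat{\A}$ has no first-order term, so $g(0)-g(\hat{\A}) \le \ff{\hat L}{2}\hat{\A}^2$, and comparing this with the slack $g(1)-g(\lhi)>\ff{(p+\tilde{\ep})^2}{2\hat L}$ gives $\hat{\A} > \ff{p+\tilde{\ep}}{\hat L}$. Since you only know $g'(\A^\circ)\le 0$, your Taylor bound at $\A^\circ$ must carry the first-order term $|g'(\A^\circ)|\,\A^\circ$, which you control only by the crude bound $|g'|<\ff{3}{2}\hat L$. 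That yields $\A^\circ \gtrsim \ff{b^2}{L^2}$ (as you state), which is quadratically weaker than $\ff{b}{L}$. Propagating this into the width estimate via the term $m^2 b\norm{x-v}^2 \gtrsim (\A^\circ)^2 b\norm{x-v}^2$ produces a lower bound on the admissible interval width that scales like $\ff{b^5}{L^5}$ rather than $\ff{b^3}{L^3}$; and the auxiliary constraint $s\le\ff{\A^\circ}{c+1}$ likewise scales like $\ff{b^2}{L^2(c+1)}$, not $\ff{b^3}{L^3(1+c/2)}$. Neither recovers the $\log_2^+\big((1+c/2)\ff{L^3}{b^3}\big)$ term in the lemma, so the iteration count you would obtain is larger than the stated one (by more than an additive constant, since the exponent on $L/b$ changes). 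You flag the final step as ``the hard part'' and sketch a case split on $|g'(\A^\circ)|$; you would need to carry that split all the way to showing $\A^\circ\gtrsim \ff{b}{L}$ (or a matching width bound by another route), and in the regime where $|g'(\A^\circ)|$ is large relative to $\hat L\A^\circ$, it is not clear your estimates close. In essence, the paper's choice of a critical-point witness is exactly what removes the obstruction you are wrestling with — and locating a critical point is what the extra invariant $g'(\hi)>\tilde{\ep}$ is for. Until the missing case analysis is actually carried out and shown to yield the stated constants, this is a genuine gap rather than a routine-details omission.
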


In summary, we achieve our accelerated quasar-convex minimization procedures (presented below) by setting $\eta^{(k)}, \beta\ind{k}$, $\ep$, and  $\alpha^{(k)}$ appropriately. In standard AGD, convexity is used to set a particular value of $\alpha\ind{k}$; by contrast, our accelerated quasar-convex minimization procedures relax the convexity assumption by computing an $\alpha\ind{k}$ satisfying \eqref{eq:ak_existence_2} via binary search (Algorithm~\ref{alg:linesearch}). By lower bounding the length of the interval of values of $\alpha^{(k)}$ satisfying \eqref{eq:ak_existence_2}, we show that this binary search only costs a logarithmic factor in the overall runtime.
\section{Algorithms}
\label{sec:algorithms}
In this section, we develop algorithms for accelerated minimization of strongly quasar-convex functions and quasar-convex functions, respectively, and analyze their running times in terms of the number of function and gradient evaluations required. We note that the Lipschitz constant $L$ does not need to be known; however, a lower bound $\hat{\1} > 0$ on $\1$ does need to be known, and the runtime depends inversely on $\hat{\1}$. In Appendix \ref{sec:experiments}, we provide numerical experiments on different types of quasar-convex functions, which validate the claim that our algorithm is not only efficient in theory but also empirically competitive with other first-order methods such as standard AGD.

\subsection{Strongly Quasar-Convex Minimization}
\label{sec:strongly-quasar-convex}
First, we provide and analyze our algorithm for $(\1, \2)$-strongly quasar-convex function minimization, where $\2 > 0$. The algorithm (Algorithm~\ref{alg:strongly_agd}) is a carefully constructed instance of the general AGD framework (Algorithm~\ref{alg:agd}).

As in the general AGD framework, the algorithm maintains two current points denoted $x^{(k)}$ and $v^{(k)}$ and at each step appropriately selects $y^{(k)} = \alpha^{(k)} x^{(k)} + (1 - \alpha^{(k)}) v^{(k)}$ as a convex combination of these two points. Intuitively, the algorithm iteratively seeks to decrease quadratic upper and lower bounds on the function value. $L$-smoothness of $f$ implies for all $x,y\in \R^n$ that $f(x) \leq UB_y(x) \defeq f(y) + {\grad f(y)^\top (x-y)} + \frac{L}{2} \norm{x - y}^2$; if $L^{(k)} = L$, then $x^{(k+1)}$ is the minimizer $y^{(k)} - \ff{1}{L} \G y^{(k)}$ of the upper bound $UB_{y^{(k)}}$. Similarly, by $(\1, \2)$ quasar-convexity, $f(x) \ge f(\qx) \ge \min_z LB_y(z)$ for all $x,y \in \R^n$, where $LB_y(x) \defeq f(y) + \frac{1}{\1} \grad f(y)^\top (x - y) + \frac{\2}{2} \norm{x-y}^2$. The minimizer of the lower bound $LB_{y^{(k)}}$ is $y^{(k)} - \frac{1}{\1 \2} \grad f(y^{(k)})$; we set $v^{(k+1)}$ to be a convex combination of $v^{(k)}$ and the minimizer of $LB_{y^{(k)}}$. 

\begin{algorithm}[H]
	\caption{Accelerated Strongly Quasar-Convex Function Minimization}
	\label{alg:strongly_agd}
	\SetAlgoLined
	\SetKwInOut{Input}{input}
	\textbf{Input}: $L$-smooth $f : \R^n \rightarrow \R$ that is $(\1,\2)$-strongly quasar-convex (with $\2 > 0$), \newline
	initial point $x^{(0)} \in \R^n$, number of iterations $K$, solution tolerance $\ep > 0$ \\
	\nl \textbf{return} output of Algorithm \ref{alg:agd} on $f$ with initial point $x^{(0)}$,
	where for all $k$,
	\vskip 0ex
	$L\ind{k} = \texttt{BacktrackingSearch}(f, \ff{\1\2}{2-\1}, x\ind{k})$, $\B\ind{k} =  1 - \1{\sqrt{\ff{\2}{L\ind{k}}}}$, \,$\eta^{(k)} = \frac{1}{\sqrt{\2 L\ind{k}}}$,
	\vskip 0ex and
	$\A^{(k)} = \Big(\texttt{BinaryLineSearch}(f, x^{(k)}, v^{(k)}, b = \ff{\1\2}{2}, c = \sqrt{\ff{L\ind{k}}{\mu}}, \tilde{\ep} = 0)$ \textbf{if} $\B\ind{k} > 0$ \textbf{else} 1\Big)
\end{algorithm}

We leverage the analysis from \Cref{sec:acceleration_framework} to analyze Algorithm \ref{alg:strongly_agd}. First, in Lemma \ref{lem:strong_converge} we show that the algorithm converges at the desired rate, by building off of Lemma \ref{lem:agd_one_step} and using the specific parameter choices in Algorithm \ref{alg:strongly_agd}.

\begin{restatable}[Strongly Quasar-Convex Convergence]{lem}{strongconverge}
\label{lem:strong_converge}
If $f$ is $L$-smooth and $(\1,\2)$-strongly quasar-convex with minimizer $x^*$, $\1 \in (0,1]$, and $\mu > 0$, then in each iteration $k \ge 0$ of Algorithm~\ref{alg:strongly_agd},
\begin{equation}
\label{eq:strong_agd_one_step}
\ep^{(k+1)} + \frac{\2}{2} r^{(k+1)} \le \left(1 - \frac{\1}{\sqrt{2\kappa}}\right) \left[\ep^{(k)} + \frac{\2}{2} r^{(k)} \right]~,
\end{equation}
where $\ep^{(k)} \defeq f( x^{(k)} ) - f(\qx), r^{(k)} \defeq \norm{ v^{(k)} - \qx }^2$, and $\k \defeq \ff{L}{\mu}$. Therefore,
if the number of iterations $K \ge \ceil{\frac{\sqrt{2\kappa}}{\1} \log^{+}\left( \frac{3\ep^{(0)}}{\1\ep} \right)}$,
then the output $x^{(K)}$ satisfies $f( x^{(K)} ) \le f(\qx) + \ep$.
\end{restatable}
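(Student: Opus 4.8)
The plan is to substitute the parameters hard-wired into \Cref{alg:strongly_agd} into the one-step bound of \Cref{lem:agd_one_step}, observe that with these choices every term involving $y^{(k)}$ cancels out, and thereby obtain a clean geometric contraction of the potential $\Phi^{(k)} \defeq \ep^{(k)} + \ff{\2}{2} r^{(k)}$; the stated iteration count then follows by unrolling the recursion and bounding $\Phi^{(0)}$ in terms of $\ep^{(0)}$ via a quadratic-growth estimate.

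First I would dispatch the routine prerequisites: the choices $\eta^{(k)} = \eta = 1/\sqrt{\2 L}$ and $\B = 1 - \1\sqrt{\2/L}$ satisfy the framework's standing requirements ($\B, \A^{(k)} \in [0,1]$, $\eta \ge \ff{\1}{L}$, and $b = \ff{1-\B}{2\eta} \ge 0$, $c = \ff{L\eta-\1}{\B} \ge 0$), so by \Cref{lem:linesearch} the $\A^{(k)}$ returned by \texttt{BinaryLineSearch} satisfies \eqref{eq:ak_existence_2} with $\tilde\ep = 0$, whence \Cref{lem:agd_linesearch} gives $Q^{(k)} \le 2\eta(L\eta - \1)\big(\ep^{(k)} - \ep^{(k)}_y\big)$. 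Plugging this, $\eta^{(k)} = \eta$, and $2\eta^2 L = \ff{2}{\2}$ into \Cref{lem:agd_one_step}, the term $+2\eta(L\eta-\1)\ep^{(k)}_y$ there cancels against the $-2\eta(L\eta-\1)\ep^{(k)}_y$ piece of $Q^{(k)}$, leaving
\[
\ff{2}{\2}\,\ep^{(k+1)} + r^{(k+1)} \;\le\; \B\, r^{(k)} + \big[(1-\B) - \1\2\eta\big]\, r^{(k)}_y + 2\eta(L\eta - \1)\,\ep^{(k)}~.
\]
The crux is now the algebra of the two remaining terms: since $1 - \B = \1\sqrt{\2/L} = \1\2\eta$ the $r^{(k)}_y$ coefficient is exactly $0$, and $2\eta(L\eta - \1) = \ff{2}{\2} - \ff{2\1}{\sqrt{\2 L}} = \ff{2}{\2}\,\B$, so the inequality collapses to $\ff{2}{\2}\ep^{(k+1)} + r^{(k+1)} \le \B\big(\ff{2}{\2}\ep^{(k)} + r^{(k)}\big)$; multiplying by $\2/2$ yields exactly $\Phi^{(k+1)} \le \B\,\Phi^{(k)}$, i.e.\ \eqref{eq:strong_agd_one_step}. (If $\B = 0$ the same two identities force $\ep^{(k+1)} = r^{(k+1)} = 0$, so the bound is trivial.) Iterating gives $\ep^{(K)} \le \Phi^{(K)} \le \B^K \Phi^{(0)}$.

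It remains to bound $\Phi^{(0)} = \ep^{(0)} + \ff{\2}{2} r^{(0)}$ by a multiple of $\ep^{(0)}$. Restricting \eqref{eq:sqc} to the segment $x(t) \defeq \qx + t(x^{(0)} - \qx)$ for $t \in [0,1]$ and setting $\phi(t) \defeq f(x(t)) - f(\qx) \ge 0$, inequality \eqref{eq:sqc} at $x(t)$ reads $\ff{t}{\1}\,\phi'(t) \ge \phi(t) + \ff{\2}{2} t^2 r^{(0)} \ge \ff{\2}{2} t^2 r^{(0)}$, so $\phi'(t) \ge \ff{\1\2}{2}\,t\,r^{(0)}$; integrating over $[0,1]$ gives $\ep^{(0)} = \phi(1) \ge \ff{\1\2}{4} r^{(0)}$, hence $\ff{\2}{2} r^{(0)} \le \ff{2}{\1}\ep^{(0)}$ and $\Phi^{(0)} \le (1 + \ff{2}{\1})\ep^{(0)} \le \ff{3}{\1}\ep^{(0)}$ (using $\1 \le 1$). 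Combining with $\B \le e^{-\1/\sqrt{\k}}$, for every $K \ge \ceil{\ff{\sqrt{\k}}{\1}\log^{+}\!\big(\ff{3\ep^{(0)}}{\1\ep}\big)}$ we obtain $\ep^{(K)} \le e^{-\1 K/\sqrt{\k}}\cdot\ff{3\ep^{(0)}}{\1} \le e^{-\log^{+}(3\ep^{(0)}/(\1\ep))}\cdot\ff{3\ep^{(0)}}{\1} \le \ep$, the last step by checking the cases $\ff{3\ep^{(0)}}{\1\ep} \ge e$ and $< e$ separately.

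\textbf{Main obstacle.} The substance of the proof is the exact cancellation in the crux step: one must verify that $\eta = 1/\sqrt{\2 L}$, $\B = 1 - \1\sqrt{\2/L}$, and the line-search constants $b, c$ are precisely the settings that annihilate \emph{both} the $r^{(k)}_y$ term and the $\ep^{(k)}_y$ term in \Cref{lem:agd_one_step}; a surviving multiple of either quantity — which we cannot control against $\ep^{(k)}$ and $r^{(k)}$ in the absence of convexity — would break the induction. By contrast, the quadratic-growth estimate for $\Phi^{(0)}$ and the final exponential-decay bookkeeping are routine.
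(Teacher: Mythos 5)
Your argument is correct and follows the same route as the paper: substitute $\eta = 1/\sqrt{\mu L}$ and $\beta = 1 - \gamma\sqrt{\mu/L}$ into \Cref{lem:agd_one_step}, invoke \Cref{lem:agd_linesearch} with $\tilde\ep=0$ to bound $Q^{(k)}$, observe the exact cancellation of the $r^{(k)}_y$ and $\ep^{(k)}_y$ coefficients, and then unroll the resulting geometric contraction. The computation $2\eta(L\eta-\gamma)=\frac{2}{\mu}\beta$ and the identity $1-\beta=\gamma\mu\eta$ are exactly the algebraic facts the paper relies on, and your dispatch of the degenerate $\beta=0$ case matches the framework's conventions. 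The only genuine deviation is the bound $\Phi^{(0)} \le \frac{3}{\gamma}\ep^{(0)}$: the paper invokes its general quadratic-growth corollary (\Cref{rem:distbound}, which gives $\ep^{(0)} \ge \frac{\gamma\mu}{2(2-\gamma)}r^{(0)}$ via the convexity-like characterization of \Cref{lem:star_char}), whereas you re-derive a slightly weaker but fully sufficient estimate $\ep^{(0)} \ge \frac{\gamma\mu}{4}r^{(0)}$ by restricting the strong quasar-convexity inequality to the segment from $\qx$ to $x^{(0)}$ and integrating $\phi'(t) \ge \frac{\gamma\mu}{2}t\,r^{(0)}$; this is a self-contained one-line substitute for the appendix machinery, at the cost of a constant. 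One cosmetic note: the displayed contraction factor in \eqref{eq:strong_agd_one_step} reads $1-\frac{1}{\gamma\sqrt{\kappa}}$, but both your derivation and the paper's produce $\beta = 1-\frac{\gamma}{\sqrt{\kappa}}$, and it is the latter that is consistent with the stated iteration count $K \ge \lceil \frac{\sqrt{\kappa}}{\gamma}\log^+(\cdot)\rceil$; you handled this correctly by working with $\beta$ throughout.
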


\newcommand{\strongconvergeProof}[1]{
\begin{proof}
For all $k$, $\eta^{(k)} = \ff{1}{\sqrt{\mu L\ind{k}}} \ge \sqrt{\ff{\1}{(2-\1) (L\ind{k})^2}} \ge \ff{\1}{L\ind{k}}$ as required by Algorithm \ref{alg:agd}, since $\ff{x}{2-x} \ge x^2$ for all $x \in [0, 1]$
and since $\ff{(2-\1)L\ind{k}}{\1} \ge \2 > 0$ by definition of $L\ind{k}$ because we use $\ff{\1\2}{2-\1}$ (which is $\le L$ by Observation \ref{obs:l_vs_mu}) as the initial guess for $L\ind{k}$ and only increase it during the backtracking search.
Similarly, since $0 < \ff{\2}{L\ind{k}} \le \ff{2-\1}{\1}$ and $\1 \in (0, 1]$, we have
$0 < \1 \sqrt{\ff{\2}{L\ind{k}}} \le \sqrt{\1(2-\1)} \le 1$, meaning that $\B\ind{k} \in [0, 1)$.
Additionally, by construction, either $\B\ind{k} = 0$ and $\A^{(k)} = 1$, or $\B\ind{k} > 0$, $\A^{(k)} \in [0,1]$, and $(\A,x,y_{\alpha},v) = (\A^{(k)},x\ind{k},y\ind{k},v\ind{k})$ satisfies \eqref{eq:ak_existence_2} with $b = \ff{\1\2}{2}= \ff{1-\B\ind{k}}{2\eta^{(k)}}$, $c = \sqrt{\ff{L\ind{k}}{\mu}} = \ff{L\ind{k}\eta^{(k)}-\1}{\B\ind{k}}$, $\tilde{\ep} = 0$.
Consequently, by combining Lemmas \ref{lem:agd_one_step} and \ref{lem:agd_linesearch}, for each iteration $k \ge 0$ of Algorithm \ref{alg:strongly_agd} we have
{\small
\[
2 (\eta\ind{k})^2 L\ind{k} \ep^{(k+1)} +
r^{(k+1)}  \le \B\ind{k} r^{(k)} + \left[(1 - \B\ind{k}) -  \1 \2 \eta\ind{k} \right] r^{(k)}_y
+ 2 \eta\ind{k} \left[ L\ind{k} \eta\ind{k} - \1 \right] \ep^{(k)}
+ 2\B\ind{k}\eta\ind{k}\tilde{\ep}.
\]
}
\noindent Substituting in $\eta\ind{k} = \frac{1}{\sqrt{\2 L\ind{k}}} = \frac{1 - \beta\ind{k}}{\1 \2}$ and $\tilde{\ep} = 0$, this implies that 
\[
\frac{2}{\2} \ep^{(k+1)} +
r^{(k+1)} \le \B\ind{k} r^{(k)} + \frac{2}{\sqrt{\2 L\ind{k}}} \left[ \sqrt{\frac{L\ind{k}}{\2}}- \1 \right] \ep^{(k)}
= \B\ind{k} \left[r^{(k)} + \frac{2}{\2} \ep^{(k)} \right] ~.
\]
Multiplying by $\2 / 2$ and using the definition of $\B$ as $1-\1\sqrt{\ff{\2}{L\ind{k}}}$ and the fact that $0 < L\ind{k} < 2L$ yields \eqref{eq:strong_agd_one_step}.
Now, by \eqref{eq:strong_agd_one_step} and induction,
\[
\ep^{(k)} + \frac{\2}{2} r^{(k)} \le
\left(1 - \frac{\1}{\sqrt{2\kappa}} \right)^k \left[\ep^{(0)} + \frac{\2}{2} r^{(0)}\right] 
\le \exp \left( - \frac{k \1}{\sqrt{2\kappa}} \right) \left[\ep^{(0)} + \frac{\2}{2} r^{(0)}\right]  ~.
\]
Therefore, whenever $k \ge \frac{\sqrt{2\kappa}}{\1} \log^{+}\left( \frac{\ep^{(0)} + \frac{\2}{2} r^{(0)} }{\ep} \right)$ we have $\ep^{(k)} = f( x^{(k)}) - f(\qx) \le \ep$, as $r^{(k)} \ge 0$ always. By Corollary \ref{rem:distbound}, $\ff{2\ep^{(0)}}{\1} \ge \frac{\2}{2} r^{(0)}$, so it suffices to run $k \ge \ceil{\frac{\sqrt{2\k}}{\1} \log^{+} \lt \ff{3\ep^{(0)}}{\1\ep} \rt}$ iterations.
\end{proof}
}

\strongconvergeProof

Note that when $f$ is $(1, \2)$-strongly quasar-convex with $\2 > 0$, Lemma \ref{lem:strong_converge} implies that the number of \textit{iterations} Algorithm \ref{alg:strongly_agd} needs to find an $\ep$-minimizer of $f$ is of the same order as the number of iterations required by standard AGD to find an $\ep$-minimizer of a $\2$-strongly convex function. In each iteration of Algorithm \ref{alg:strongly_agd}, we compute $\alpha^{(k)}$ and then simply perform $O(1)$ vector operations to compute $y^{(k)}$, $x^{(k+1)}$, and $v^{(k+1)}$. Consequently, to obtain a complete bound on the overall complexity of Algorithm \ref{alg:strongly_agd}, it remains to bound the cost of computing $\alpha^{(k)}$, which we do using Lemma \ref{lem:linesearch}. This leads to Theorem \ref{thm:strong_runtime}.

\begin{restatable}{thm}{strongruntime}
\label{thm:strong_runtime}
If $f$ is $L$-smooth and $(\1,\2)$-strongly quasar-convex with $\1 \in (0,1]$ and $\2 > 0$, then Algorithm \ref{alg:strongly_agd} produces an $\ep$-optimal point after
$O\lt \1^{-1}\k^{1/2} \log \lt \1^{-1}\k \rt \log^{+} \lt \ff{f( x^{(0)} ) - f(\qx)}{\1\ep} \rt \rt$ function and gradient evaluations.
\end{restatable}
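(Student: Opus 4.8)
The plan is to combine the iteration bound from \Cref{lem:strong_converge} with the per-iteration line-search cost from \Cref{lem:linesearch}, and verify that the product is only a logarithmic factor worse than the bare iteration count. First I would invoke \Cref{lem:strong_converge}: it suffices to run $K = \ceil{\frac{\sqrt{\kappa}}{\1}\log^{+}(\frac{3\ep^{(0)}}{\1\ep})}$ iterations of \Cref{alg:strongly_agd} (with $\ep^{(0)} = f(x^{(0)})-f(\qx)$), which is $O(\1^{-1}\k^{1/2}\log^{+}(\frac{f(x^{(0)})-f(\qx)}{\1\ep}))$ iterations. Each iteration performs $O(1)$ vector operations plus one call to \texttt{BinaryLineSearch} (when $\B > 0$; when $\B = 0$ there is no line search and $\A^{(k)}=1$, so that case is trivial). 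So the entire task reduces to bounding the cost of a single \texttt{BinaryLineSearch} call by $O(\log(\1^{-1}\k))$ function and gradient evaluations.

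For that, I would apply \Cref{lem:linesearch} with the parameters used in \Cref{alg:strongly_agd}: $b = \ff{1-\B}{2\eta}$, $c = \ff{L\eta-\1}{\B}$, $\tilde\ep = 0$. Since $\tilde\ep = 0$, the $\ff{L\norm{x-v}^2}{\tilde\ep}$ term in the $\min$ is $+\infty$, so the bound becomes $5 + 2\ceil{\log_2^{+}((1+c/2)\,L^3/b^3)}$. Now I substitute the explicit parameter values. We have $\eta = \frac{1}{\sqrt{\2 L}}$ and $1-\B = \1\sqrt{\2/L} = \1/\sqrt{\k}$, so $L\eta = \sqrt{L/\2} = \sqrt{\k}$, giving $c = \ff{\sqrt\k - \1}{\B} \le \ff{\sqrt\k}{\B}$. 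From the proof of \Cref{lem:strong_converge}, $\B \in [0,1)$ and in the nontrivial case $\B > 0$; I would also note $\B = 1 - \1/\sqrt\k \ge 1 - 1 = 0$ with $\B$ bounded away from $0$ only when $\1/\sqrt\k$ is bounded away from $1$ — but even without a lower bound on $\B$, one can check $\ff{\sqrt\k}{\B}$ is $O(\k)$ whenever $\B \ge 1/2$, and handle $\B < 1/2$ (equivalently $\sqrt\k < 2\1 \le 2$, i.e. $\k = O(1)$) separately, where $c = O(1)$ too; in all cases $1 + c/2 = O(\k/\1)$ crudely, and more carefully $O(\k)$ when $\B$ is bounded below. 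For the other factor, $b = \ff{1-\B}{2\eta} = \ff{\1/\sqrt\k}{2/\sqrt{\2 L}} = \ff{\1\sqrt{\2 L}}{2\sqrt\k} = \ff{\1\2}{2}$ (using $\sqrt{\2 L}/\sqrt\k = \sqrt{\2 L}\cdot\sqrt{\2/L} = \2$), so $L^3/b^3 = \ff{8L^3}{\1^3\2^3} = \ff{8\k^3}{\1^3}$. Hence $(1+c/2)\,L^3/b^3 = O(\k \cdot \k^3/\1^3) = O(\k^4/\1^3) = O((\1^{-1}\k)^4)$ (using $\k \ge 1$ and $\1 \le 1$), and taking $\log_2^{+}$ of this is $O(\log^{+}(\1^{-1}\k)) = O(\log(\1^{-1}\k))$ up to the understanding that the paper's $O(\cdot)$ absorbs the additive constant when the argument is large; more safely I'd write the line-search cost as $O(1 + \log^{+}(\1^{-1}\k))$. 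Therefore each iteration costs $O(\log^{+}(\1^{-1}\k))$ evaluations.

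Multiplying the iteration count by the per-iteration cost gives the claimed total of
\[
O\!\left(\1^{-1}\k^{1/2}\,\log(\1^{-1}\k)\,\log^{+}\!\left(\tfrac{f(x^{(0)})-f(\qx)}{\1\ep}\right)\right)
\]
function and gradient evaluations, completing the proof. The main obstacle is the parameter bookkeeping in applying \Cref{lem:linesearch}: one must simplify $b = \1\2/2$ and $c \le \sqrt\k/\B$ correctly and argue that $\B$ being close to $0$ is not a problem — either by observing that $\B \to 0$ forces $\k = O(1)$ (so $c$ and $L^3/b^3$ are both small), or by keeping the $\1^{-1}$ inside the logarithm, which is harmless since $\log(\1^{-1}\k/\B) = O(\log(\1^{-1}\k))$ when $\B$ is bounded below and is $O(\log(\1^{-1}))= O(\log(\1^{-1}\k))$ otherwise. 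Everything else is a direct substitution. One should also double-check that the hypotheses of \Cref{lem:linesearch} ($f$ is $L$-smooth, $b,c,\tilde\ep \ge 0$) hold: smoothness is assumed, $\tilde\ep = 0 \ge 0$, $b = \1\2/2 > 0$, and $c = (\sqrt\k - \1)/\B \ge 0$ since $\sqrt\k = \sqrt{L/\2} \ge 1 \ge \1$ by \Cref{obs:l_vs_mu} and $\B > 0$ in this case.
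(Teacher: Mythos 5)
Your approach is the same as the paper's: combine the iteration count from \Cref{lem:strong_converge} with the per-iteration line-search cost from \Cref{lem:linesearch}, and check the product has the claimed form. However, you missed a clean exact cancellation that the paper uses and that your argument actually needs: since $1-\gamma/\sqrt{\kappa} = (\sqrt{\kappa}-\gamma)/\sqrt{\kappa}$, one has $c = \frac{L\eta-\gamma}{\B} = \frac{\sqrt{\kappa}-\gamma}{1-\gamma/\sqrt{\kappa}} = \sqrt{\kappa}$ exactly, independent of $\B$. Without this, your case split on $\B$ has a hole: in the sub-case $\B < \ff{1}{2}$ you observe $\kappa = O(1)$ and then assert $c = O(1)$, but from the bound $c \le \sqrt{\kappa}/\B$ alone this does not follow, since $\B$ can approach $0$ while $\kappa$ stays bounded. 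Separately, your justification $\sqrt{\kappa} = \sqrt{L/\mu} \ge 1$ is false in general: \Cref{obs:l_vs_mu} gives only $\kappa \ge \gamma/(2-\gamma)$, which is below $1$ whenever $\gamma < 1$. What you actually need for $c \ge 0$ is $\sqrt{\kappa} \ge \gamma$, i.e.\ $L\eta \ge \gamma$, and that was already established in the proof of \Cref{lem:strong_converge} (indeed $\gamma/(2-\gamma) \ge \gamma^2$ for $\gamma \in (0,1]$). With $c = \sqrt{\kappa}$ and $b = \gamma\mu/2$ plugged into \Cref{lem:linesearch}, the per-iteration cost is $O\lt\log^+\lt\sqrt{\kappa}\cdot \kappa^3/\gamma^3\rt\rt = O\lt\log(\gamma^{-1}\kappa)\rt$, and the rest of your argument goes through.
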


\newcommand{\strongruntimeProof}[1]{
\begin{proof}
Lemma \ref{lem:strong_converge} implies that $O\lt \ff{\sqrt{\k}}{\1} \log^+ \lt \ff{\ep^{(0)}}{\1\ep} \rt \rt$ iterations are needed to get an $\ep$-optimal point. Lemma \ref{lem:linesearch} implies that each iteration uses
$O \lt \log^+ \lt (1+c) \min \left\{\ff{L\norm{x-v}^2}{\tilde{\ep}}, \ff{L^3}{b^3} \right\} \rt \rt$ function and gradient evaluations. In this case,
$b = \ff{\1 \2}{2}$, $c = \sqrt{\ff{L\ind{k}}{\2}} \in \left[\sqrt{\ff{\1}{2}}, \ff{2L}{\mu}\right]$, and $\tilde{\ep} = 0$. Thus, this reduces to $O(\log^+ ( \sqrt{\k} \ff{L^3}{\1^3 \2^3} )) = O(\log(\ff{\k}{\1}))$.
So, the total number of required function and gradient evaluations is $O\lt \ff{\sqrt{\k}}{\1} \log \lt \ff{\k}{\1} \rt \log^+ \lt \ff{\ep^{(0)}}{\1\ep} \rt \rt$ as claimed.

Note that Lemma \ref{lem:strong_converge} shows that $x\ind{k}$ will be $\ep$-optimal if $k = \ceil{\frac{\sqrt{2\kappa}}{\1} \log^{+}\left( \frac{3\ep^{(0)}}{\1\ep} \right)}$, while the above argument shows that $O\lt \ff{\sqrt{\k}}{\1} \log \lt \ff{\k}{\1} \rt \log^+ \lt \ff{\ep^{(0)}}{\1\ep} \rt \rt$ function and gradient evaluations are required to compute such an $x\ind{k}$. Thus, Algorithm \ref{alg:strongly_agd} produces \textit{an} $\ep$-optimal point using at most this many evaluations; however, of course, the algorithm need not return instantly and may still continue to run if the specified number of iterations $K$ is larger. (Future iterates will also be $\ep$-optimal.)
\end{proof}
}

\strongruntimeProof

Standard AGD on $L$-smooth $\mu$-strongly-convex functions requires $O\lt \k^{1/2} \log^{+} \lt \ff{f( x^{(0)} ) - f(\qx)}{\ep} \rt \rt$ function and gradient and evaluations to find an $\ep$-optimal point \citep{Nesterov04}. Thus, as the class of $L$-smooth $(1,\mu)$-strongly quasar-convex functions contains the class of $L$-smooth $\mu$-strongly convex functions, our algorithm requires only a $O(\log(\k))$ factor extra function and gradient evaluations in the smooth strongly convex case, while also being able to efficiently minimize a much broader class of functions than standard AGD.

\subsection{Non-Strongly Quasar-Convex Minimization}
\label{sec:quasar-convex}
Now, we provide and analyze our algorithm (Algorithm \ref{alg:nonstrong_agd}) for \textit{non-strongly} quasar-convex function minimization, i.e. when $\2 = 0$. Once again, this algorithm is an instance of Algorithm \ref{alg:agd}, the general AGD framework, with a different choice of parameters. We assume $L > 0$, since otherwise quasar-convexity implies the function is constant.

\begin{algorithm}[H]
	\caption{Accelerated Non-Strongly Quasar-Convex Function Minimization}
	\label{alg:nonstrong_agd}
	\SetAlgoLined
	\SetKwInOut{Input}{input}
	\textbf{Input}: $L$-smooth $f : \R^n \rightarrow \R$ that is $\1$-quasar-convex, \\
	initial point $x^{(0)} \in \R^n$, number of iterations $K$, solution tolerance $\ep > 0$ \\
	Define $\w^{(-1)} = 1$, and $\w^{(k)} = \ff{\w^{(k-1)}}{2} \lt\sqrt{(\w^{(k-1)})^2 + 4}-\w^{(k-1)}\rt$ for $k \ge 0$ \\
	\nl Set $L\ind{-1} = \texttt{BacktrackingSearch}(f, \ep, x\ind{0}, {\small \texttt{run\_halving=True}})$ \\
	\nl \textbf{return} output of Algorithm \ref{alg:agd} on $f$ with initial point $x^{(0)}$,
	where for all $k$,
	\vskip 0ex
	$\B\ind{k} = 1$,
	$L\ind{k} = \texttt{BacktrackingSearch}(f, \quad \max\limits_{\mathclap{\scriptscriptstyle k' \in [-1,k-1]}} \,\,\, L\ind{k'}, \, x\ind{k})$, \,$\eta^{(k)} = \frac{\1}{L\ind{k} \w^{(k)}}$,
	and \vskip 0ex
	$\A^{(k)} = \texttt{BinaryLineSearch}(f, x^{(k)}, v^{(k)}, b = 0, c = \gamma(\ff{1}{\w\ind{k}}-1), \tilde{\ep} = \ff{\1\ep}{2})$
\end{algorithm}

\begin{restatable}[Non-Strongly Quasar-Convex AGD Convergence]{lem}{nonstrong}
\label{lem:nonstrong_converge}
If $f$ is $L$-smooth and $\1$-quasar-convex with respect to a minimizer $\qx$, with $\1 \in (0,1]$, then in each iteration $k \ge 0$ of Algorithm~\ref{alg:nonstrong_agd},
\begin{equation}
\label{eq:nonstrong_agd_one_step}
\ep^{(k)} \leq \f{8}{(k+2)^2} \left[\ep^{(0)} + \frac{L}{2\1^2} r^{(0)} \right] + \f{\ep}{2}~,
\end{equation}
where $\ep^{(k)} \defeq f( x^{(k)} ) - f(\qx)$ and $r^{(k)} \defeq \norm{v^{(k)} - \qx}^2$.
Therefore, if $R \ge \norm{x^{(0)} - \qx}$ and the number of iterations
$K \ge \floor{8\1^{-1}L^{1/2}R\ep^{-1/2}}$,
then the output $x^{(K)}$ satisfies ${f(x^{(K)}) \le f(\qx) + \ep}$.
\end{restatable}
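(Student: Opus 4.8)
The plan is to specialize the one-step bound of \Cref{lem:agd_one_step} to the parameter choices of \Cref{alg:nonstrong_agd} — namely $\beta = 1$, $\mu = 0$, $\eta^{(k)} = \gamma/(L\omega^{(k)})$, and $\tilde\ep = \gamma\ep/2$ — and combine it with \Cref{lem:agd_linesearch} to get a recursion for the potential $\Psi^{(k)} \defeq \ep^{(k)} + \tfrac{L(\omega^{(k-1)})^2}{2\gamma^2} r^{(k)}$. Note \Cref{lem:agd_linesearch} is applicable because the call to \texttt{BinaryLineSearch} returns (by \Cref{lem:linesearch}) an $\A^{(k)} \in [0,1]$ satisfying \eqref{eq:ak_existence_2} with exactly the required $b = \tfrac{1-\beta}{2\eta^{(k)}} = 0$ and $c = \tfrac{L\eta^{(k)}-\gamma}{\beta} = L\eta^{(k)}-\gamma$, which is nonnegative since $L\eta^{(k)} = \gamma/\omega^{(k)} \ge \gamma$ (using $\omega^{(k)} \le 1$, which also gives $\eta^{(k)} \ge \gamma/L$ as required by \Cref{alg:agd}). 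Plugging $\beta = 1,\ \mu = 0$ into \Cref{lem:agd_one_step} and bounding $Q^{(k)}$ via \Cref{lem:agd_linesearch}, the $\ep^{(k)}_y$ terms cancel exactly, leaving $2(\eta^{(k)})^2 L\ep^{(k+1)} + r^{(k+1)} \le r^{(k)} + 2\eta^{(k)}(L\eta^{(k)}-\gamma)\ep^{(k)} + 2\eta^{(k)}\tilde\ep$.

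Next I would substitute $\eta^{(k)} = \gamma/(L\omega^{(k)})$ and $\tilde\ep = \gamma\ep/2$ and multiply through by $\tfrac{L(\omega^{(k)})^2}{2\gamma^2}$: the coefficient of $\ep^{(k+1)}$ becomes $1$, the term $2\eta^{(k)}(L\eta^{(k)}-\gamma)\ep^{(k)}$ becomes $(1-\omega^{(k)})\ep^{(k)}$, and the error term $\tfrac{L(\omega^{(k)})^2}{2\gamma^2}\cdot 2\eta^{(k)}\tilde\ep$ becomes exactly $\tfrac{\omega^{(k)}\ep}{2}$. A direct manipulation of the recurrence defining $\omega^{(k)}$ gives the identity $(\omega^{(k)})^2 = (1-\omega^{(k)})(\omega^{(k-1)})^2$, so $\tfrac{L(\omega^{(k)})^2}{2\gamma^2} r^{(k)} = (1-\omega^{(k)})\tfrac{L(\omega^{(k-1)})^2}{2\gamma^2} r^{(k)}$, hence $\Psi^{(k+1)} \le (1-\omega^{(k)})\Psi^{(k)} + \tfrac{\omega^{(k)}\ep}{2}$. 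Shifting by the fixed point, i.e. setting $\hat\Psi^{(k)} \defeq \Psi^{(k)} - \ep/2$, this reads $\hat\Psi^{(k+1)} \le (1-\omega^{(k)})\hat\Psi^{(k)}$, so $\hat\Psi^{(k)} \le \big(\prod_{j=0}^{k-1}(1-\omega^{(j)})\big)\hat\Psi^{(0)}$. The same identity telescopes the product to $(\omega^{(k-1)})^2/(\omega^{(-1)})^2 = (\omega^{(k-1)})^2$, and an easy induction off the identity (via $1/\omega^{(k)} \ge 1/\omega^{(k-1)} + 1/2$) gives $\omega^{(k-1)} \le 2/(k+2)$. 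Since $r^{(k)} \ge 0$ and $\hat\Psi^{(0)} \le \Psi^{(0)} = \ep^{(0)} + \tfrac{L}{2\gamma^2}r^{(0)}$, we conclude $\ep^{(k)} \le \Psi^{(k)} \le \tfrac{4}{(k+2)^2}\big(\ep^{(0)} + \tfrac{L}{2\gamma^2}r^{(0)}\big) + \tfrac{\ep}{2}$, which implies \eqref{eq:nonstrong_agd_one_step} (the stated constant $8$ is not tight).

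For the iteration count, \Cref{fact:smooth_decr} gives $\ep^{(0)} = f(x^{(0)}) - f(\qx) \le \tfrac{L}{2}\norm{x^{(0)}-\qx}^2 \le \tfrac{L}{2}R^2 \le \tfrac{L}{2\gamma^2}R^2$ (using $\gamma \le 1$), and $r^{(0)} \le R^2$, so $\ep^{(0)} + \tfrac{L}{2\gamma^2}r^{(0)} \le LR^2/\gamma^2$. Substituting $k = K$ into \eqref{eq:nonstrong_agd_one_step} then yields $\ep^{(K)} \le \tfrac{8LR^2}{\gamma^2(K+2)^2} + \tfrac{\ep}{2}$, which is at most $\ep$ once $(K+2)^2 \ge 16LR^2/(\gamma^2\ep)$, i.e. $K \ge 4\gamma^{-1}L^{1/2}R\ep^{-1/2} - 2$; in particular $K \ge \floor{4\gamma^{-1}L^{1/2}R\ep^{-1/2}}$ suffices, as $\floor{4\gamma^{-1}L^{1/2}R\ep^{-1/2}} \ge 4\gamma^{-1}L^{1/2}R\ep^{-1/2} - 1$.

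I expect the only genuinely delicate point to be the handling of the additive line-search error $\tilde\ep = \gamma\ep/2$: one must check that after all the rescalings it contributes exactly $\tfrac{\omega^{(k)}\ep}{2}$ at step $k$, and — crucially — that these contributions do not accumulate but instead sum to the bounded total $\ep/2$, which is exactly what the shift $\hat\Psi^{(k)} = \Psi^{(k)} - \ep/2$ makes transparent. Everything else is routine bookkeeping on top of the framework of \Cref{sec:acceleration_framework}, together with the elementary properties of the sequence $\{\omega^{(k)}\}$: the identity $(\omega^{(k)})^2 = (1-\omega^{(k)})(\omega^{(k-1)})^2$, the telescoping product, and $\omega^{(k-1)} \le 2/(k+2)$.
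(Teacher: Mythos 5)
Your proposal is correct and follows essentially the same skeleton as the paper's proof: specialize \Cref{lem:agd_one_step} with $\beta = 1$, $\mu = 0$, invoke \Cref{lem:agd_linesearch} to cancel the $\ep^{(k)}_y$ terms, and then track a weighted potential of $\ep^{(k)}$ and $r^{(k)}$ through the recursion in $\omega^{(k)}$. Where you differ is in packaging. The paper works with the unnormalized (growing) potential $A^{(k)}\ep^{(k)} + r^{(k)}$ with $A^{(k)} = \tfrac{2\gamma^2}{L} s^{(k)}$ and $s^{(k)} = 1 + \sum_{i=0}^{k-1} 1/\omega^{(i)}$, absorbs the line-search errors into a running sum $2\tilde\ep\sum_i \eta^{(i)}$, and then bounds the ratio $\big(2\tilde\ep\sum_i \eta^{(i)}\big)/A^{(k)} \le \gamma^{-1}\tilde\ep$; you instead normalize by $L(\omega^{(k-1)})^2/(2\gamma^2)$, which turns the recursion into the strictly contractive form $\Psi^{(k+1)} \le (1-\omega^{(k)})\Psi^{(k)} + \tfrac{\omega^{(k)}\ep}{2}$, and then the fixed-point shift $\hat\Psi^{(k)} = \Psi^{(k)} - \ep/2$ makes the error-accumulation argument a one-liner. (Note the implicit identity $s^{(k)} = 1/(\omega^{(k-1)})^2$, which follows from $1/(\omega^{(k)})^2 - 1/(\omega^{(k-1)})^2 = 1/\omega^{(k)}$, so the two potentials are literally the same up to scaling.) Your bound $\omega^{(k-1)} \le 2/(k+2)$ via $1/\omega^{(k)} \ge 1/\omega^{(k-1)} + 1/2$ is also cleaner and tighter than the paper's $s^{(k)} \ge (k+2)^2/8$ obtained by summing $\omega^{(i)} \le 4/(i+6)$; it gives $(\omega^{(k-1)})^2 \le 4/(k+2)^2$, so you recover the lemma with a leading constant of $4$ rather than $8$, as you note. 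The only (very minor) omission is the explicit check that $\omega^{(k)} \in (0,1)$, used both for $\eta^{(k)} \ge \gamma/L$ and for $c \ge 0$ in the line search — the paper supplies this as a separate sublemma; your identity $(\omega^{(k)})^2 = (1-\omega^{(k)})(\omega^{(k-1)})^2$ together with positivity of the recursion actually gives it immediately, so no real gap.
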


Combining the bound on the number of iterations from Lemma \ref{lem:nonstrong_converge},
and the bound from Lemma \ref{lem:linesearch} on the number of function and gradient evaluations during the line search, leads to the bound in Theorem \ref{thm:nonstrong_runtime} on the total number of function and gradient evaluations required to find an $\ep$-optimal point. The proofs of Lemma \ref{lem:nonstrong_converge} and Theorem \ref{thm:nonstrong_runtime} are given in \Cref{sec:nonstrong-analysis}.

\begin{restatable}{thm}{nonstrongruntime}
\label{thm:nonstrong_runtime}
If $f$ is $L$-smooth and $\1$-quasar-convex with respect to a minimizer $\qx$, with $\1 \in (0,1]$ and $\norm{x^{(0)} - \qx} \le R$, then Algorithm \ref{alg:nonstrong_agd} produces an $\ep$-optimal point after
\newline
$O\lt \1^{-1} L^{1/2}R\ep^{-1/2} \log^{+}\lt \1^{-1} L^{1/2}R \ep^{-1/2}\rt \rt$ function and gradient evaluations.
\end{restatable}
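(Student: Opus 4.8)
The plan is to mirror the proof of \Cref{thm:strong_runtime}: bound the number of iterations of \Cref{alg:nonstrong_agd} using \Cref{lem:nonstrong_converge}, bound the per-iteration cost (one call to \texttt{BinaryLineSearch} plus $O(1)$ extra function and gradient evaluations) using \Cref{lem:linesearch}, and multiply. First I would dispose of the trivial regime $\ep \ge \tfrac{L}{2}R^2$: there $f(x^{(0)}) - f(\qx) \le \tfrac{L}{2}\norm{x^{(0)}-\qx}^2 \le \tfrac{L}{2}R^2 \le \ep$ by \Cref{fact:smooth_decr}, so $x^{(0)}$ is already $\ep$-optimal and no evaluations are needed; hence assume $\ep < \tfrac{L}{2}R^2$, which makes every quantity bounded below at least a constant so that its logarithm is $O(\log^{+}(\gamma^{-1}L^{1/2}R\ep^{-1/2}))$. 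By \Cref{lem:nonstrong_converge} it suffices to run $K = \floor{4\gamma^{-1}L^{1/2}R\ep^{-1/2}} = O(\gamma^{-1}L^{1/2}R\ep^{-1/2})$ iterations. Iteration $k$ calls \texttt{BinaryLineSearch} with $b = 0$, $c = c^{(k)} \defeq L\eta^{(k)} - \gamma$, $\tilde{\ep} = \gamma\ep/2$, so since $b = 0$, \Cref{lem:linesearch} bounds its cost by $5 + 2\ceil{\log_2^{+}\!\big((1+c^{(k)}/2)\cdot 2L\norm{x^{(k)}-v^{(k)}}^2/(\gamma\ep)\big)}$, and the remaining task is to bound $c^{(k)}$ and $\norm{x^{(k)}-v^{(k)}}^2$ polynomially in $\gamma^{-1},L,R,\ep^{-1}$.

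For $c^{(k)}$, write $t_k \defeq 1/\omega^{(k)}$; the recursion defining $\omega^{(k)}$ is equivalent to $t_k^2 = t_{k-1}^2 + t_k$ with $t_{-1} = 1$, so $t_k = \tfrac12(1 + \sqrt{1 + 4 t_{k-1}^2}) \le 1 + t_{k-1}$ and hence $t_k \le k+2$; thus $c^{(k)} = \gamma(t_k - 1) \le \gamma K = O(L^{1/2}R\ep^{-1/2})$. For $\norm{x^{(k)}-v^{(k)}}^2 \le 2\norm{v^{(k)}-\qx}^2 + 2\norm{x^{(k)}-\qx}^2$ I would bound the two terms separately. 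The term $r^{(k)} = \norm{v^{(k)}-\qx}^2$ is controlled by the potential $\Phi_k \defeq (\omega^{(k-1)})^{-2}\ep^{(k)} + \tfrac{L}{2\gamma^2} r^{(k)}$ underlying the proof of \Cref{lem:nonstrong_converge}: plugging the parameter choices of \Cref{alg:nonstrong_agd} into \Cref{lem:agd_one_step} and \Cref{lem:agd_linesearch} and using the weight identity $(\omega^{(k)})^{-2} - (\omega^{(k)})^{-1} = (\omega^{(k-1)})^{-2}$ yields $\Phi_{k+1} \le \Phi_k + \tilde{\ep}\, t_k/\gamma$, so $\Phi_k \le \Phi_0 + (\tilde{\ep}/\gamma)\sum_{j<K} t_j = O(LR^2/\gamma^2)$ (using $\sum_{j<K} t_j = O(K^2)$, $\ep^{(0)} \le \tfrac{L}{2}R^2$, $r^{(0)} \le R^2$), whence $r^{(k)} = O(R^2)$.

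It remains to bound $\norm{x^{(k)}-\qx}^2$. Since $x^{(k+1)} = y^{(k)} - \tfrac1L \nabla f(y^{(k)})$ with $\nabla f(\qx) = 0$ and $\qx$ a $\gamma$-quasar-convex point, expanding $\norm{x^{(k+1)}-\qx}^2$ and applying \eqref{eq:qc} at $y^{(k)}$ together with \Cref{fact:smooth_decr} gives
\[
\norm{x^{(k+1)}-\qx}^2 \;\le\; \norm{y^{(k)}-\qx}^2 + \tfrac{2(1-\gamma)}{L}\,\ep^{(k)}_y \,.
\]
As $y^{(k)}$ is a convex combination of $x^{(k)}$ and $v^{(k)}$, $\norm{y^{(k)}-\qx}^2 \le \A^{(k)}\norm{x^{(k)}-\qx}^2 + (1-\A^{(k)}) r^{(k)}$, and the inequality \eqref{eq:ak_existence_2} satisfied by $\A^{(k)}$ (with $b = 0$) forces $\ep^{(k)}_y \le \ep^{(k)} + \tilde{\ep}/c^{(k)} = \ep^{(k)} + O(\ep)$ whenever $\nabla f(y^{(k)})^\top(x^{(k)}-v^{(k)}) \ge 0$, and keeps $\ep^{(k)}_y$ controlled otherwise. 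Substituting and summing the resulting increments over $k < K$ (using $r^{(k)} = O(R^2)$ and $\sum_k \ep^{(k)} = O(LR^2/\gamma^2)$ from \Cref{lem:nonstrong_converge}) gives $\norm{x^{(k)}-\qx}^2 = O(\mathrm{poly}(\gamma^{-1},L,R,\ep^{-1}))$. Plugging the polynomial bounds on $c^{(k)}$ and $\norm{x^{(k)}-v^{(k)}}^2$ into \Cref{lem:linesearch} and using $\log^{+}(\mathrm{poly}(\gamma^{-1},L,R,\ep^{-1})) = O(\log^{+}(\gamma^{-1}L^{1/2}R\ep^{-1/2}))$, each iteration costs $O(\log^{+}(\gamma^{-1}L^{1/2}R\ep^{-1/2}))$ function and gradient evaluations; multiplying by $K$ gives the claimed total.

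The main obstacle is the bound on $\norm{x^{(k)}-\qx}$ (equivalently on $\norm{x^{(k)}-v^{(k)}}$). The naive estimate $\norm{x^{(k+1)}-\qx}^2 \le (2-\gamma)\norm{y^{(k)}-\qx}^2$ lets the squared distance grow by a constant factor every iteration, giving a bound exponential in $K$; because $\norm{x^{(k)}-v^{(k)}}^2$ enters a logarithm this would produce a $\Theta(K^2)$ total cost that swamps the target runtime, so a genuinely polynomial bound is required. The resolution is that the line search keeps $f(y^{(k)})$ within $O(\ep + \ep^{(k)})$ of $f(x^{(k)})$, so the extra term $\tfrac{2(1-\gamma)}{L}\ep^{(k)}_y$ above is small rather than proportional to the current squared distance, and these small increments sum to a polynomially bounded quantity; carefully verifying this — especially in the sub-case $\nabla f(y^{(k)})^\top(x^{(k)}-v^{(k)}) < 0$, where \eqref{eq:ak_existence_2} alone does not immediately bound $\ep^{(k)}_y$ — is the crux.
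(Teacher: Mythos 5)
Your proposal correctly lays out the outer structure (iteration count from \Cref{lem:nonstrong_converge}, per-iteration cost from \Cref{lem:linesearch}, polynomial bound on $\norm{x^{(k)}-v^{(k)}}$, logarithm), and the preliminary pieces are sound: the bound $t_k \defeq 1/\omega^{(k)} \le k+2$ giving $c^{(k)} = O(\gamma K)$, and the potential argument giving $r^{(k)} = O(R^2)$, both match the paper. Your identity $\norm{x^{(k+1)}-\qx}^2 \le \norm{y^{(k)}-\qx}^2 + \tfrac{2(1-\gamma)}{L}\ep^{(k)}_y$ is also a correct consequence of quasar-convexity at $y^{(k)}$ and \Cref{fact:smooth_decr}.

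However, the route you take to bound $\norm{x^{(k)}-v^{(k)}}$ — through $\norm{x^{(k)}-\qx}$ and $\ep^{(k)}_y$ — has a genuine unresolved gap exactly where you flag it. The termination condition \eqref{eq:ak_existence_2} with $b=0$ gives only $c\,\ep^{(k)}_y \le c\,\ep^{(k)} + \tilde\ep - \alpha^{(k)} g'(\alpha^{(k)})$, so when $g'(\alpha^{(k)}) = \nabla f(y^{(k)})^\top(x^{(k)}-v^{(k)}) < 0$ the inequality permits $\ep^{(k)}_y$ to exceed $\ep^{(k)} + \tilde\ep/c^{(k)}$ by $|\alpha^{(k)} g'(\alpha^{(k)})|/c^{(k)}$, and nothing in \texttt{BinaryLineSearch} caps this term. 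The returned $\alpha$ need not satisfy $g(\alpha)\le g(\lhi)$ (the loop invariant controls the endpoints $\lo,\hi$, not the midpoint that is returned), so there is no a priori bound on $g(\alpha)$. Trying to close the loop with Cauchy--Schwarz, $|g'(\alpha)| \le \norm{\nabla f(y^{(k)})}\norm{x^{(k)}-v^{(k)}} \le \sqrt{2L\ep^{(k)}_y}\,\norm{x^{(k)}-v^{(k)}}$, produces a self-referential quadratic in $\sqrt{\ep^{(k)}_y}$ whose solution $\sqrt{\ep^{(k)}_y} = O(\sqrt{L}\norm{x^{(k)}-v^{(k)}}/c^{(k)} + \dots)$ reintroduces $\norm{x^{(k)}-v^{(k)}}$ with a coefficient that does not shrink as $k$ grows (it tends to a constant, since $c^{(k)}\eta^{(k)}\sim \gamma/L$ scales like $1/L$), so the resulting recursion for $\norm{x^{(k)}-v^{(k)}}$ still blows up geometrically. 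Calling this "the crux" and deferring "careful verification" leaves the proof incomplete at the step that actually carries the theorem.

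The paper avoids this entirely by never touching $\ep^{(k)}_y$ or $\norm{x^{(k)}-\qx}$. Since $\beta = 1$ in \Cref{alg:nonstrong_agd}, one has $v^{(k+1)} = v^{(k)} - \eta^{(k)}\nabla f(y^{(k)})$, hence
\[
\norm{\nabla f(y^{(k)})} \;=\; \frac{\norm{v^{(k+1)}-v^{(k)}}}{\eta^{(k)}} \;\le\; \frac{\norm{v^{(k+1)}-\qx} + \norm{v^{(k)}-\qx}}{\eta^{(k)}}\,,
\]
which is controlled by the already-established bound on $r^{(k)}$, with no reference to the line search at all. Then the exact update identity $x^{(k+1)} - v^{(k+1)} = \alpha^{(k)}(x^{(k)}-v^{(k)}) + (\eta^{(k)} - 1/L)\nabla f(y^{(k)})$ and $\alpha^{(k)}\in[0,1]$ give $\norm{x^{(k+1)}-v^{(k+1)}} \le \norm{x^{(k)}-v^{(k)}} + (\eta^{(k)}+1/L)\norm{\nabla f(y^{(k)})}$, which telescopes to a polynomial bound in $k$ without any geometric factor. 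If you want to rescue your own approach you would need to replicate this $\beta=1$ observation; as written, the argument in the sub-case you identify does not go through.
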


Note that standard AGD on the class of $L$-smooth \textit{convex} functions requires $O\lt L^{1/2}R\ep^{-1/2} \rt$ function and gradient evaluations to find an $\ep$-optimal point; so, again, our algorithm requires only a logarithmic factor more evaluations than does standard AGD.
\section{Lower bounds}\label{sec:lb}
\newcommand{\fB}{\bar{f}_{T,\sigma}}

In this section, we construct lower bounds which demonstrate that the algorithms we presented in Section~\ref{sec:algorithms} obtain, up to logarithmic factors, the best possible worst-case iteration bounds for deterministic first-order minimization of quasar-convex functions. To do so, we extend the ideas from \cite{carmon2017lower}, a seminal paper which mechanized the process of constructing lower bounds. The key idea is to construct a \emph{zero-chain}, which is defined as a function $f$ for which if $x_j = 0, \forall j \ge t$ then $\frac{\partial f(x)}{\partial x_{t+1}} = 0$. On these zero-chains, one can provide lower bounds for a particular class of methods known as \emph{first-order zero-respecting (FOZR) algorithms}, which are algorithms that only query the gradient at points $x\ind{t}$ with $x\ind{t}_i \neq 0$ if there exists some $j < t$ with $\grad_i f(x\ind{j}) \neq 0$. Examples of FOZR algorithms include gradient descent, accelerated gradient descent, and nonlinear conjugate gradient \citep{cg}. It is relatively easy to form lower bounds for FOZR algorithms applied to zero-chains, because one can prove that if the initial point is $x\ind{0} = \mathbf{0}$, then $x\ind{T}$ has at most $T$ nonzeros \cite[Observation~1]{carmon2017lower}. The particular zero-chain we use to derive our lower bounds is\vspace{-1em}
$$
\vspace{-1em}\fB(x) \defeq q(x) + \sigma \sum_{i=1}^{T} \Upsilon (x_i) ~,
$$
where
\newcommand{\funcdefs}{
\begin{flalign*}
\Upsilon (\theta) &\defeq 120 \int_{1}^{\theta} \frac{t^2 (t-1) }{1 + t^2} \,dt \\
q(x) &\defeq \frac{1}{4} (x_1-1)^2 + \frac{1}{4} \sum_{i=1}^{T-1} (x_i - x_{i+1})^2.
\end{flalign*}
}
\funcdefs
This function $\fB$ is similar to the function $\bar{f}_{T,\mu,r}$ of \cite{carmon2017lower2}.
However, the lower bound proof is different because the primary challenge is to show $\fB$ is quasar-convex, rather than showing that $\norm{ \grad \fB(x) } \ge \epsilon$ for all $x$ with $x_{T} = 0$.
Our main lemma shows that this function is in fact $\frac{1}{100 T \sqrt{\sigma}}$-quasar-convex.

\begin{restatable}{lem}{lemUnscaledLB}
\label{lem:lb-unscaled}
Let $\sigma \in (0, 10^{-4}], T \in \left[\sigma^{-1/2}, \infty \right) \cap \Z$.
The function $\bar{f}_{T,\sigma}$ is $\frac{1}{100 T \sqrt{\sigma}}$-quasar-convex and $3$-smooth, with unique minimizer $x^{*} = \mathbf{1}$. Furthermore, if $x_{t} = 0$ for all $t = \ceil{T/2}, \dots, T$, then $\fB(x) - \fB(\mathbf{1}) \ge 2 T \sigma$.
\end{restatable}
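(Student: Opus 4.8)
The plan is to verify the three claimed properties of $\bar f_{T,\sigma}$ separately: (i) $1$-smoothness, (ii) that $x^* = \mathbf 1$ is the unique minimizer, and (iii) the $\frac{1}{100T\sqrt\sigma}$-quasar-convexity, with (iii) being by far the hardest. For smoothness, I would compute the Hessian of $\bar f_{T,\sigma}$: the quadratic part $q(x)$ contributes a fixed tridiagonal matrix whose operator norm is a small absolute constant (at most $1$ after accounting for the $\tfrac14$ coefficients), and the separable part $\sigma\sum_i \Upsilon(x_i)$ contributes a diagonal matrix with entries $\sigma\,\Upsilon''(x_i)$. Since $\Upsilon'(\theta) = 120\,\theta^2(\theta-1)/(1+\theta^2)$, one differentiates once more and checks $\sup_\theta |\Upsilon''(\theta)|$ is bounded by an absolute constant; multiplying by $\sigma \le 10^{-6}$ makes this negligible, so the total Hessian has norm $\le 1$. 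For the minimizer claim: $\Upsilon'(\theta) = 0$ exactly at $\theta \in \{0, 1\}$ (the factor $t^2$ gives a degenerate critical point at $0$ and $t-1$ gives one at $1$), and $\Upsilon$ is decreasing on $(-\infty, 0)$, increasing on $(1,\infty)$, with $0$ only an inflection-type flat point; combined with the chain-coupling quadratic $q$, setting $\nabla \bar f_{T,\sigma}(x) = 0$ forces $x_1 = x_2 = \dots = x_T$ and then forces the common value to be $1$. One checks $\mathbf 1$ is a strict global min (the function is coercive and $\mathbf 1$ is the only stationary point achieving the lowest value).

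For the quasar-convexity (iii), I would use the line-restriction characterization: by Observation \ref{obs:unimodalImpliesQuasar}, it suffices to understand $\bar f_{T,\sigma}$ restricted to an arbitrary line through $x^* = \mathbf 1$, i.e. $\phi(\theta) \defeq \bar f_{T,\sigma}(\mathbf 1 + \theta d)$ for a unit vector $d$, and show this one-dimensional function is $\gamma$-quasar-convex with $\gamma = \frac{1}{100T\sqrt\sigma}$. Actually the cleaner route — and the one I'd follow — is to directly verify inequality \eqref{eq:qc}: show that for all $x$,
\[
\nabla \bar f_{T,\sigma}(x)^\top(\mathbf 1 - x) \ge \gamma\bigl(\bar f_{T,\sigma}(x) - \bar f_{T,\sigma}(\mathbf 1)\bigr),
\]
with $\gamma = \frac{1}{100T\sqrt\sigma}$. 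The left side splits as $\nabla q(x)^\top(\mathbf 1 - x) + \sigma\sum_i \Upsilon'(x_i)(1 - x_i)$. Since $q$ is convex, $\nabla q(x)^\top(\mathbf 1 - x) \ge q(\mathbf 1) - q(x) = -q(x)$ (as $q(\mathbf 1) = 0$); so the quadratic part is already controlled. The core of the argument is a one-variable inequality: I need $\Upsilon'(\theta)(1-\theta) \ge c_1(\Upsilon(\theta) - \Upsilon(1)) - c_2$ type bounds, or more precisely to show that for each coordinate, $\Upsilon'(\theta)(1-\theta)$ dominates a suitable multiple of $\Upsilon(\theta)-\Upsilon(1)$ up to slack absorbed by the $-q(x)$ term and the $\gamma$ scaling. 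The factor $T$ in $\gamma$ enters because there are $T$ coordinates in the $\sigma$-sum, and the $\sqrt\sigma$ enters from balancing the scale of $\Upsilon$ (order $1$) against the scale at which the quadratic coupling becomes comparable.

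The main obstacle, as the paper itself flags, is establishing (iii): proving quasar-convexity requires a delicate coordinate-wise analysis of $\Upsilon$ — showing $\Upsilon'(\theta)(1-\theta) + (\text{quadratic slack}) \ge \gamma(\Upsilon(\theta) - \Upsilon(1))$ uniformly in $\theta$ — where one must carefully track the behavior near the degenerate critical point $\theta = 0$ (where $\Upsilon'$ vanishes to second order but $\Upsilon(\theta) - \Upsilon(1) > 0$) and control the regime $\theta < 0$ and $\theta$ large. I would handle this by splitting the range of $\theta$ into cases ($\theta \le 0$, $0 \le \theta \le 1$, $\theta \ge 1$), using explicit bounds on $\int_1^\theta t^2(t-1)/(1+t^2)\,dt$, and combining the per-coordinate deficits with the $-q(x) \le 0$ term to close the inequality. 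Finally, the last assertion — that $x_t = 0$ for $t \ge \lceil T/2\rceil$ forces $\bar f_{T,\sigma}(x) - \bar f_{T,\sigma}(\mathbf 1) \ge 2T\sigma$ — is comparatively easy: with at least $T/2$ coordinates pinned at $0$, each such coordinate contributes $\sigma(\Upsilon(0) - \Upsilon(1)) = \sigma \cdot 120\int_0^1 \frac{t^2(t-1)}{1+t^2}\,dt$ (a positive absolute constant times $\sigma$) to the gap, and $q(x) \ge 0$, so summing over the $\ge T/2$ pinned coordinates and checking the constant exceeds $4$ gives the bound $\ge 2T\sigma$.
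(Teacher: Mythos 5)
Your high-level division of labor (smoothness, uniqueness of the minimizer, quasar-convexity, and the final gap bound) matches the paper, and your treatment of the last point is fine: each pinned coordinate contributes $\sigma\Upsilon(0)\ge 5\sigma$, there are more than $T/2$ of them, and $q\ge 0$, so the gap exceeds $2T\sigma$. The smoothness estimate is also essentially what the paper does.

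The serious problem is in part (iii). You propose a \emph{per-coordinate} argument: show that for each $\theta$, $\Upsilon'(\theta)(\theta-1)$ dominates a $\gamma$-multiple of $\Upsilon(\theta)$ (possibly with case-splitting on $\theta$), and then sum. This cannot work, and not because of a missing calculation: near the degenerate critical point $\theta=0$, $\Upsilon'(\theta)\to 0$ while $\Upsilon(\theta)\to\Upsilon(0)\ge 5$, so for no $\gamma'>0$ does $\Upsilon'(\theta)(\theta-1)\ge\gamma'\Upsilon(\theta)$ hold uniformly in a neighborhood of $0$. The paper's per-coordinate bound (Lemma~\ref{lem:props-ups}, property~\ref{item:ups-quasar}, $40(\theta-1)\Upsilon'(\theta)\ge\Upsilon(\theta)$) is deliberately stated only for $\theta\notin(-0.1,0.1)$. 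Coordinates with $x_i\in(-0.1,0.1)$ contribute a constant $\sigma\Upsilon(x_i)$ to $\fB(x)$ while contributing essentially nothing to $\nabla\fB(x)^\top(x-\mathbf 1)$, so a deficit of order $T\sigma$ accumulates on the function-value side that no coordinate-wise inequality can repair. The paper's resolution is a cross-coordinate ``transition chain'' argument: since $x_1$ is near $1$ and some $x_i$ is near $0$, the vector must traverse $[0.1,0.9]$ along the chain; if it does so in few steps, the coupling term $q(x)\ge(x_{j_1}-x_{j_2})^2/(4(j_2-j_1))$ is large, and if it does so in many steps, many coordinates land in $[0.1,0.9]$ where $\Upsilon'<-1$ gives a large contribution. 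Balancing the chain length $m$ at roughly $\sigma^{-1/2}$ is exactly where the $\sqrt\sigma$ in $\gamma$ comes from. Your sketch contains none of this, and your case-split $\theta\le 0$, $0\le\theta\le 1$, $\theta\ge 1$ does not isolate the actual trouble region $\theta\in(-0.1,0.1)$.

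Two smaller points. First, you have the quasar-convexity inequality with the wrong sign: it should be $\nabla\fB(x)^\top(x-\mathbf 1)\ge\gamma(\fB(x)-\fB(\mathbf 1))$, not with $\mathbf 1 - x$; correspondingly the convexity of $q$ gives a helpful $+q(x)$ (in fact $+2q(x)$ since $q$ is a convex quadratic centered at $\mathbf 1$) on the left, not a $-q(x)$ term to ``absorb slack into.'' This sign matters because the positive $q(x)$ term is precisely what the transition argument leans on. Second, your uniqueness argument via stationary points is more delicate than needed and incomplete as stated (the gradient equations couple $q$ and $\sigma\Upsilon'$, so ``forces $x_1=\cdots=x_T$'' does not follow directly); the paper's route is simpler: $q\ge 0$ and $\Upsilon\ge 0$ with equality only at $\mathbf 1$ (for $q$) and at $\theta\in\{1\}$ for $\Upsilon$ give $\fB(\mathbf 1)=0<\fB(x)$ for $x\ne\mathbf 1$ directly.
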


The proof of Lemma~\ref{lem:lb-unscaled} appears in Appendix~\ref{sec:lem-lb-proof}.
The argument rests on showing that the quasar-convexity inequality $\frac{1}{100 T \sqrt{\sigma}}(\fB(x) - \fB(\mathbf{1})) \le \grad \fB(x)^T (x - \mathbf{1}) $ holds for all $x \in \R^T$. The nontrivial situation is when there exists some $j_1 < j_2$ such that $x_{j_1} \ge 0.9$, $x_{j_2} \le 0.1$, and $0.1 \le x_i \le 0.9$ for $i \in \{j_1 + 1, \dots, j_2 - 1\}$. In this situation, we use ideas closely related to the transition region arguments made in Lemma~3 of \cite{carmon2017lower2}. The intuition is as follows. If the gaps $x_{i+1} - x_{i}$ are large, then the convex function $q(x)$ dominates the function value and gradient of $\fB(x)$, allowing us to establish quasar-convexity. Conversely, if the $x_{i+1} - x_{i}$'s are small, then a large portion of the $x_i$'s must lie in the quasar-convex region of $\Upsilon$, and the corresponding $\Upsilon'(x_i) (x_i - 1)$ terms make $\grad \fB(x)^\top (x - \mathbf{1})$ sufficiently positive.

\def\LbSetup{Let $\ep \in (0, \infty)$, $\1 \in (0, 10^{-2}]$, $T = \ceil{10^{-3} \1^{-1} L^{1/2} R \epsilon^{-1/2}}$, and $\sigma = \frac{1}{10^4 T^2 \1^2}$, and assume $L^{1/2}R\ep^{-1/2} \ge 10^{3}$.}
\begin{restatable}{lem}{lemMainLBquasar}\label{lem:main-lb-quasar}
\LbSetup{} Consider the function
\begin{flalign}\label{eq:f-hat}
\hat{f}(x) \defeq \ff{1}{3} L R^2 T^{-1} \cdot \fB ( x T^{1/2} R^{-1} ).
\end{flalign}
This function is $L$-smooth and $\1$-quasar-convex, and its minimizer $x^*$ is unique and has $\norm{x^*} = R$.
Furthermore, if $x_{t} = 0\,\, \forall t \in \mathbb{Z} \cap [T/2,T]$, then $\hat{f}(x) - \inf_z \hat{f}(z) > \epsilon$.
\end{restatable}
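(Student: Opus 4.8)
The plan is to obtain Lemma~\ref{lem:main-lb-quasar} from Lemma~\ref{lem:lb-unscaled} by a pure rescaling argument, since $\hat f$ is $\bar{f}_{T,\sigma}$ composed with a scalar dilation of the domain and multiplied by a positive constant: write $\hat f(x) = c\,\bar{f}_{T,\sigma}(Ax)$ with $c \defeq LR^2 T^{-1} > 0$, $A \defeq T^{1/2}R^{-1} > 0$, and set $\hat x^* \defeq \mathbf 1/A = RT^{-1/2}\mathbf 1$. First I would check the hypotheses of Lemma~\ref{lem:lb-unscaled}. From $\1 \le 10^{-2}$ and $L^{1/2}R\epsilon^{-1/2}\ge 10^3$ we get $T \ge 10^{-2}\1^{-1}L^{1/2}R\epsilon^{-1/2} \ge 10^3$, so $T\1 \ge 10$; since $\sigma = (10^4 T^2\1^2)^{-1}$ we have $\sqrt\sigma = (100T\1)^{-1}$, hence $\sigma \le 10^{-6}$, $\sigma^{-1/2} = 100T\1 \le T$ (using $\1 \le 10^{-2}$), and $T \in \Z$ by definition. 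So Lemma~\ref{lem:lb-unscaled} applies: $\bar{f}_{T,\sigma}$ is $1$-smooth with unique minimizer $\mathbf 1$, is $\frac{1}{100T\sqrt\sigma}$-quasar-convex with $\frac{1}{100T\sqrt\sigma} = \1$ exactly, and satisfies $\bar{f}_{T,\sigma}(w) - \bar{f}_{T,\sigma}(\mathbf 1) \ge 2T\sigma$ whenever $w_t = 0$ for $t = \ceil{T/2},\dots,T$.

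Next I would transfer the three structural conclusions through the rescaling. For smoothness, $\grad\hat f(x) = cA\,\grad\bar{f}_{T,\sigma}(Ax)$ gives $\norm{\grad\hat f(x) - \grad\hat f(y)} \le cA^2\norm{x-y}$, and $cA^2 = L$. For quasar-convexity, substitute $w = Ax$ into $\bar{f}_{T,\sigma}(\mathbf 1) \ge \bar{f}_{T,\sigma}(w) + \1^{-1}\grad\bar{f}_{T,\sigma}(w)^\top(\mathbf 1 - w)$, multiply by $c$, and use $\hat x^* - x = A^{-1}(\mathbf 1 - Ax)$ together with $\grad\hat f(x) = cA\,\grad\bar{f}_{T,\sigma}(Ax)$ to recover exactly $\hat f(\hat x^*) \ge \hat f(x) + \1^{-1}\grad\hat f(x)^\top(\hat x^* - x)$; since $\mathbf 1$ is the unique minimizer of $\bar{f}_{T,\sigma}$, $\hat x^*$ is the unique minimizer of $\hat f$, so $\hat f$ is $\1$-quasar-convex. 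Finally $\norm{\hat x^*} = RT^{-1/2}\norm{\mathbf 1} = RT^{-1/2}T^{1/2} = R$.

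For the hard-instance property, if $x_t = 0$ for all $t \in \Z\cap[T/2,T]$, then since $A \ne 0$ we have $(Ax)_t = 0$ for $t = \ceil{T/2},\dots,T$, so Lemma~\ref{lem:lb-unscaled} gives $\hat f(x) - \inf_z\hat f(z) = c\,(\bar{f}_{T,\sigma}(Ax) - \bar{f}_{T,\sigma}(\mathbf 1)) \ge 2cT\sigma = 2LR^2\sigma$, and it remains to verify $2LR^2\sigma > \epsilon$, equivalently $T < \sqrt 2\cdot 10^{-2}\1^{-1}L^{1/2}R\epsilon^{-1/2}$; this holds because $T < 10^{-2}\1^{-1}L^{1/2}R\epsilon^{-1/2} + 1 \le (1+10^{-3})\cdot 10^{-2}\1^{-1}L^{1/2}R\epsilon^{-1/2} < \sqrt 2\cdot 10^{-2}\1^{-1}L^{1/2}R\epsilon^{-1/2}$, using once more $10^{-2}\1^{-1}L^{1/2}R\epsilon^{-1/2}\ge 10^3$. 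There is no genuine obstacle here; the only care needed is numerical bookkeeping, namely ensuring the two hypotheses of Lemma~\ref{lem:lb-unscaled} ($\sigma \le 10^{-6}$ and $T \ge \sigma^{-1/2}$) and the final strict inequality $2LR^2\sigma > \epsilon$ all survive the ceiling in the definition of $T$, which is precisely where the standing assumptions $L^{1/2}R\epsilon^{-1/2}\ge 10^3$ and $\1 \le 10^{-2}$ get used.
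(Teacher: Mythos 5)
Your argument is correct and follows the same route as the paper: apply Lemma~\ref{lem:lb-unscaled} after verifying $\sigma \le 10^{-6}$ and $T \ge \sigma^{-1/2}$, then transfer smoothness, the minimizer, and quasar-convexity through the affine rescaling $\hat f(x) = c\,\fB(Ax)$, and finally bound $2LR^2\sigma$ against $\epsilon$ by controlling the effect of the ceiling in $T$. The only difference is cosmetic: you spell out the change-of-variables in the quasar-convexity inequality where the paper cites Observation~\ref{obs:scaling}, and your treatment of the ceiling makes the final inequality $T < \sqrt 2\cdot 10^{-2}\1^{-1}L^{1/2}R\epsilon^{-1/2}$ manifestly strict, matching the strict conclusion in the statement (the paper writes this step with non-strict inequalities, though strictness follows by the same token).
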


The proof of Lemma~\ref{lem:main-lb-quasar} appears in Appendix~\ref{sec:lem-lb-proof}. Combining Lemma~\ref{lem:main-lb-quasar} with Observation~1 from \cite{carmon2017lower} yields a lower bound for first-order zero-respecting algorithms, and an extension of this lower bound to the class of all deterministic first-order methods. This leads to Theorem \ref{thm:main-lb-quasar}, whose proof appears in Appendix~\ref{sec:coro-lb-proof}.

\begin{restatable}{thm}{coroMainLBquasar}\label{thm:main-lb-quasar}
Let $\epsilon, R, L \in (0,\infty)$, $\1 \in (0, 1]$, and assume $L^{1/2}R\ep^{-1/2} \ge 1$. Let $\mathcal{F}$ denote the set of $L$-smooth functions that are $\1$-quasar-convex with respect to some point with Euclidean norm less than or equal to $R$. Then, given any deterministic first-order method, there exists a function $f \in \mathcal{F}$ such that the method requires at least $\Omega(\1^{-1} L^{1/2} R \epsilon^{-1/2} )$ gradient evaluations to find an $\epsilon$-optimal point of $f$.
\end{restatable}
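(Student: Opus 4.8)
The plan is to derive \Cref{thm:main-lb-quasar} from \Cref{lem:main-lb-quasar} in two stages. First, I would establish the lower bound for the restricted class of first-order zero-respecting algorithms. The function $\hat{f}$ in \Cref{lem:main-lb-quasar} is, up to an affine rescaling of the domain and codomain, the zero-chain $\fB$; since the class of zero-respecting algorithms is invariant under such rescalings (scaling the coordinates and the function value does not change which coordinates are ``activated''), $\hat f$ is itself a first-order zero-chain. Therefore, by Observation~1 of \citet{carmon2017lower}, if such an algorithm is initialized at $x^{(0)} = \mathbf{0}$, then after $t$ gradient queries the iterate $x^{(t)}$ has at most $t$ nonzero coordinates, and in particular $x^{(t)}_s = 0$ for all $s > t$. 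Taking $t < T/2$, we get $x^{(t)}_s = 0$ for all $s \in \mathbb{Z} \cap [T/2, T]$, so by the last clause of \Cref{lem:main-lb-quasar}, $\hat f(x^{(t)}) - \inf_z \hat f(z) > \epsilon$, i.e. $x^{(t)}$ is not $\epsilon$-optimal. Hence at least $\lceil T/2 \rceil = \Omega(T) = \Omega(\gamma^{-1} L^{1/2} R \epsilon^{-1/2})$ gradient queries are required, using $T = \lceil 10^{-2}\gamma^{-1} L^{1/2} R \epsilon^{-1/2}\rceil$ and the hypothesis $L^{1/2}R\epsilon^{-1/2} \ge 10^3$ to ensure $T$ is large enough that the constant is absorbed. (For $\gamma \in (10^{-2}, 1]$, one simply invokes the bound with $\gamma$ replaced by $10^{-2}$, which only changes constants, since a $\gamma$-quasar-convex function with small $\gamma$ is not automatically quasar-convex with a larger parameter — rather, one directly notes $\hat f$ built with parameter $10^{-2}$ lies in $\mathcal{F}$ and the bound $\Omega(\gamma^{-1}\cdots)$ is weakest when $\gamma$ is largest, so the case $\gamma \le 10^{-2}$ is the binding one and the remaining range is handled by monotonicity of the claimed bound in $\gamma$.)

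Second, I would lift this to \emph{arbitrary} deterministic first-order methods using the orthogonal-transformation / resisting-oracle technique of \citet{carmon2017lower}. The idea is that a general deterministic method has no a priori preferred coordinate system, so one composes $\hat f$ with a random (or adversarially chosen) orthogonal matrix $U$, defining $f_U(x) \defeq \hat f(U^\top x)$, which is still $L$-smooth, $\gamma$-quasar-convex (these properties are rotation-invariant, and the quasar-convex point becomes $Ux^*$, still of norm $R$), and has the same optimal value. One then argues, exactly as in \citet{carmon2017lower}, that against any deterministic method there is a choice of $U$ (equivalently, the adversary reveals $U$ column-by-column as the method queries new directions) such that the sequence of queried points behaves, relative to $U$, like the iterates of a zero-respecting method applied to $\hat f$: the key quantitative lemma is that in high dimension $n$, with $n$ suitably larger than $T$, the queried points are (with the adversarial construction) essentially orthogonal to the not-yet-revealed columns of $U$, so the corresponding coordinates of $U^\top x^{(t)}$ remain negligibly small — small enough that the ``$\ge \epsilon$ suboptimality when the late coordinates vanish'' conclusion still goes through after adjusting constants. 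This reduces the general case to the zero-respecting case already handled, giving the same $\Omega(\gamma^{-1}L^{1/2}R\epsilon^{-1/2})$ bound.

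The main obstacle is the second stage: making the reduction from general first-order methods to zero-respecting ones fully rigorous. The subtle points are (i) verifying that the robustness of the suboptimality conclusion in \Cref{lem:main-lb-quasar} — which was stated for the exact event $\{x_t = 0\ \forall t \in [T/2,T]\}$ — survives being weakened to ``$x_t$ is small but nonzero for late $t$,'' which is what the orthogonal-transformation argument actually delivers; this requires either a quantitative continuity estimate on $\hat f$ near such points or, more cleanly, citing the version of the argument in \citet{carmon2017lower} that already packages this robustness (their Theorem~3 / Section~3); and (ii) bookkeeping the dimension: we need $n = \Omega(T \cdot \mathrm{polylog})$ or so for the probabilistic argument, which is fine since \Cref{thm:main-lb-quasar} places no restriction on $n$, but it must be stated. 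Everything else — $L$-smoothness, quasar-convexity, the norm bound on the minimizer, and the suboptimality gap — is inherited directly from \Cref{lem:main-lb-quasar} and is invariant under the rescaling and rotation, so those steps are routine. I would therefore structure the proof as: (1) reduce general $\gamma$ to $\gamma \le 10^{-2}$; (2) invoke \Cref{lem:main-lb-quasar} and the zero-chain property to get the zero-respecting lower bound; (3) quote the orthogonal-invariance extension of \citet{carmon2017lower} to conclude for all deterministic first-order methods; (4) collect constants to read off $\Omega(\gamma^{-1}L^{1/2}R\epsilon^{-1/2})$.
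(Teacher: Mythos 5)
Your high-level architecture matches the paper's: establish the lower bound for zero-respecting first-order algorithms using the zero-chain structure of $\hat{f}$ together with the conclusion of \Cref{lem:main-lb-quasar}, then lift to arbitrary deterministic first-order methods using the orthogonal-transformation argument of \citet{carmon2017lower}. The paper's proof is essentially exactly these two citations (via its Observation~\ref{obs-zero-respecting} and Proposition~1 of \citet{carmon2017lower}).

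Two comments. First, the second-stage ``obstacles'' you raise are already internal to the result you are invoking: Proposition~1 of \citet{carmon2017lower} is a black-box lemma converting a zero-respecting lower bound over a function class closed under orthogonal transformations (which $\mathcal{F}$ is, by direct verification) into a lower bound for all deterministic first-order methods in a suitably enlarged dimension, robustness and bookkeeping included; re-deriving its mechanism is unnecessary, and the paper does not. Second, there is a genuine error in your handling of $\gamma \in (10^{-2}, 1]$: you assert ``$\hat{f}$ built with parameter $10^{-2}$ lies in $\mathcal{F}$,'' but quasar-convexity is monotone the other way --- a $\gamma_1$-quasar-convex function is automatically $\gamma_2$-quasar-convex only for $\gamma_2 \le \gamma_1$ (\Cref{obs:tradeoff}) --- so a function that is exactly $10^{-2}$-quasar-convex is in general \emph{not} $\gamma$-quasar-convex for $\gamma > 10^{-2}$ and hence need not lie in $\mathcal{F}$. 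The clean fix is that for every $\gamma \le 1$, $\mathcal{F}$ contains all $L$-smooth \emph{convex} functions whose minimizer has norm at most $R$ (convexity gives $1$-quasar-convexity, which gives $\gamma$-quasar-convexity for all $\gamma \le 1$), and Nesterov's classical lower bound for that class is $\Omega(L^{1/2}R\epsilon^{-1/2})$; since $\gamma^{-1} \le 100$ when $\gamma \ge 10^{-2}$, this matches the claimed $\Omega(\gamma^{-1}L^{1/2}R\epsilon^{-1/2})$ up to a constant absorbed by the $\Omega$. The paper's own two-line proof also leaves this boundary case (and the regime $1 \le L^{1/2}R\epsilon^{-1/2} < 10^3$) implicit, so your instinct that $\gamma \le 10^{-2}$ is the interesting regime is right; only the mechanism you wrote for the easy regime is incorrect.
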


Theorem \ref{thm:main-lb-quasar} demonstrates that the upper bound for our algorithm for quasar-convex minimization is tight within logarithmic factors. We note that by reduction (Remark~\ref{rem:sc-lower-bound}), one can prove a lower bound of $\Omega(\1^{-1}  \k^{1/2} )$ for strongly quasar-convex functions; thus, our algorithm for strongly quasar-convex minimization is also optimal within logarithmic factors.

Although the construction of our lower bounds is similar to that of \cite{carmon2017lower2}, there are important differences between our lower bounds and theirs. First, the assumptions differ significantly; we assume quasar-convexity and Lipschitz continuity of the first derivative, while \cite{carmon2017lower2} assumes Lipschitz continuity of the first \emph{three} derivatives. Next, the bounds in \citep{carmon2017lower,carmon2017lower2} apply to finding $\ep$-stationary points, rather than $\ep$-optimal points. In addition, our lower and upper bounds only differ by logarithmic factors, whereas there is a gap of $\tilde{O}(\ep^{-1/15})$ between the lower bound of $\Omega(\epsilon^{-8/5})$ given by \citep{carmon2017lower2} and the best known corresponding upper bound of $O(\epsilon^{-5/3}\log(\ep^{-1}))$ \citep{carmon2017convex}. Finally, we require $x_{t} = 0$ for all $t > T/2$ to guarantee $\hat{f}(x) - \inf_z \hat{f}(z) > \epsilon$, whereas \cite{carmon2017lower,carmon2017lower2} only need $x_{T} = 0$ to guarantee $\norm{ \grad \hat{f}(x) } > \epsilon$.

\section{Conclusion}

In this work, we introduce a generalization of star-convexity called quasar-convexity and provide
insight into the structure of quasar-convex functions.
We show how to obtain a near-optimal accelerated rate for the minimization of any smooth function in this broad class,
using a simple but novel binary search technique. In addition, we provide nearly matching theoretical lower bounds for
the performance of any first-order method on this function class.
Interesting topics for future research are to further understand the prevalence of quasar-convexity in problems of practical interest,
and to develop new accelerated methods for other structured classes of nonconvex problems.

\nocite{dang2013on}
\nocite{tyurin2017}
\nocite{joulani2017}
\newpage

\section*{Acknowledgements}
The work of Aaron Sidford was supported by NSF CAREER Award CCF-1844855.

{
\bibliographystyle{plainnat}
\small
\bibliography{acceleration_paper}
}

\newpage
\appendix

\section{Extended Related Work}
\subsection{Related Function Classes to Quasar-Convexity}
\label{sec:related-classes}
In this section, we provide a brief taxonomy of related conditions (relaxations of convexity or strong convexity), and describe how they relate to quasar-convexity. For simplicity, here we assume $f$ is $L$-smooth with domain $\mathcal{X} = \R^n$. We denote the minimum of $f$ by $f^*$ and the set of minimizers of $f$ by $\mathcal{X}^*$; when $\mathcal{X}^*$ consists of a single point, we denote the point by $x^*$.

First, we review the definitions of quasar-convexity, star-convexity, and convexity. Recall that (strong) quasar-convexity is a generalization of (strong) star-convexity, which itself generalizes (strong) convexity.
\begin{itemize}
\item \emph{(Strong) quasar-convexity} (with parameters $\1 \in (0, 1]$,\,\, $\mu \ge 0$): for some $x^* \in \mathcal{X}^*$, \linebreak $f(x^*) \ge f(x) + \ff{1}{\1}\G f(x)^\top (x^*-x) + \ff{\2}{2} \norm{x^*-x}^2$ for all $x \in \X$.
\begin{itemize}
\item When $\mu = 0$, this is merely referred to as \emph{quasar-convexity}, which is also known as \emph{weak quasi-convexity} \citep{weakquasiconvexity}.
\item When $\mu > 0$, $f$ has exactly one minimizer $x^*$.
\end{itemize}

\item \emph{(Strong) star-convexity} (with parameter $\mu \ge 0$): for some $x^* \in \mathcal{X}^*$, $f(x^*) \ge f(x) + {\G f(x)^\top (x^*-x)} \\ + \ff{\2}{2} \norm{x^*-x}^2$ for all $x \in \X$.
\begin{itemize}
\item When $\mu = 0$, this is merely referred to as \emph{star-convexity}.
\item When $\mu > 0$, this is also known as \emph{quasi-strong convexity} \citep{necoara}.
\item When $\mu = 0$, $f$ may not have a unique minimizer; some authors require the condition to hold for \emph{all} $x^* \in \mathcal{X}^*$ \citep{nesterov2006cubic}, while others only require it for \emph{some} $x^* \in \mathcal{X}^*$ \citep{lee2016optimizing}; we use the latter definition.
\item When $\mu > 0$, $f$ has exactly one minimizer $x^*$.
\end{itemize}

\item \emph{(Strong) convexity} (with parameter $\mu \ge 0$): $f(y) \ge f(x) + \G f(x)^\top(y-x) + \ff{\mu}{2}\norm{y-x}^2$ for all $x,y \in \X$.
\begin{itemize}
\item When $\mu = 0$, this is merely referred to as \emph{convexity}.
\end{itemize}
\end{itemize}

Next, we enumerate some other generalizations of strong convexity from the literature, and state whether they generalize quasar-convexity, are generalized by quasar-convexity, or neither.
\begin{itemize}
\item \emph{Weak convexity} \citep{vial1983strong} (with parameter $\mu > 0$): $f(y) \ge f(x) + \G f(x)^\top(y-x) - \ff{\mu}{2}\norm{y-x}^2$ for all $x,y \in \X$.
\begin{itemize}
\item Neither implies nor is implied by quasar-convexity.
\end{itemize}

\item \emph{Quadratic growth condition} (with parameter $\mu > 0$) \citep{anitescu}: $f(x) \ge f(x^*) + \ff{\mu}{2} \norm{x^*-x}^2$ for all $x \in \X$.
\begin{itemize}
\item Neither implies nor is implied by quasar-convexity.
\end{itemize}

\item \emph{Restricted secant condition} (with parameter $\mu > 0$) \citep{zhang2013gradient}: $0 \ge \G f(x)^\top(x^*- x) + \ff{\mu}{2}\norm{x^*-x}^2$ for all $x \in \X$.
\begin{itemize}
\item Implied by $(\1,\ff{\2}{\1})$-strong quasar-convexity (for any choice of $\1 \in (0, 1]$).
\end{itemize}

\item \emph{One-point strong convexity} (with parameter $\mu > 0$) \citep{li2017relu}: for some $y \in \X$, $0 \ge \G f(x)^\top(y - x) + \ff{\mu}{2}\norm{y-x}^2$ for all $x \in \X$.
\begin{itemize}
\item This is a generalization of the restricted secant property (which is one-point strong convexity in the special case $y = x^*$), and is therefore likewise implied by strong quasar-convexity.
\end{itemize}

\item \emph{Variational coherence} \citep{zhou2017vc}: $0 \ge \G f(x)^\top(x^*- x)$ for all $x \in \X$, $x^* \in \X^*$, with equality iff $x \in \X^*$.
\begin{itemize}
\item Implied by strong quasar-convexity (for any $\2 > 0$ and $\1 \in (0, 1]$). The closely related weaker condition ``for some $x^* \in \X^*$, $0 \ge \G f(x)^\top (x^* - x)$ for all $x \in \X$, with equality iff $x \in \X^*$'' is implied by quasar-convexity (for any $\2 \ge 0, \1 \in (0, 1]$). In fact, the set of functions satisfying this condition is the limiting set of the class of $\1$-quasar-convex functions as $\1 \ra 0$; this is the set of differentiable functions with star-convex sublevel sets.
\end{itemize}

\item \emph{Polyak-{\L}ojasiewicz condition} \citep{polyak} (with parameter $\mu > 0$): $\ff{1}{2} \norm{\G f(x)}^2 \ge \mu (f(x) - f_*)$ for all $x \in \X$.
\begin{itemize}
\item This is implied by the restricted secant property \citep{karimi2016linear}, and therefore by strong quasar-convexity.
\end{itemize}

\item \emph{Quasiconvexity} \citep{quasi}: $f(\M x + (1-\M)y) \le \max\{f(x), f(y)\}$ for all $x,y \in \X$ and $\M \in [0,1]$.
\begin{itemize}
\item Neither implies nor is implied by quasar-convexity. (However, the set of differentiable quasiconvex functions is contained in the limiting  set of the the class of $\1$-quasar-convex functions as $\1 \ra 0$.)
\end{itemize}

\item \emph{Pseudoconvexity} \citep{pseudo}: $f(y) \ge f(x)$ for all $x,y \in \X$ such that $\G f(x) \cdot (y-x) \ge 0$.
\begin{itemize}
\item Neither implies nor is implied by quasar-convexity.
\end{itemize}

\item \emph{Invexity} \citep{craven1985invex}: $x \in \X^*$ for all $x \in \X$ such that $\G f(x) = \mathbf{0}$.
\begin{itemize}
\item Implied by quasar-convexity (for any $\2 \ge 0, \1 \in (0, 1]$).
\end{itemize}

\end{itemize}

\subsection{Comparison to Nesterov et al. (2018)}
\label{app:nesterov}
As discussed in Section \ref{sec:star-related}, both our method (Algorithm \ref{alg:nonstrong_agd}) and Algorithm 2 of \cite{nesterov2018primal} minimize $\gamma$-quasar-convex functions at accelerated rates. Compared to the method in \cite{nesterov2018primal}, our algorithm attains a better runtime bound by a factor of $\gamma^{1/2}$, and does not require the optimal function value to be known. Meanwhile, the method of \cite{nesterov2018primal} can handle functions that are $L$-smooth (where $L$ need not be known) with respect to more general norms (not necessarily Euclidean). The algorithms themselves are quite similar (in the case of $\gamma$-quasar-convex functions that are $L$-smooth with respect to the Euclidean norm), as both are generalizations of standard AGD. However, unlike \cite{nesterov2018primal}, our algorithm does not rely on any restarts. Furthermore, while both algorithms conduct a one-dimensional minimization between current iterates $x\ind{k}$ and $v\ind{k}$ in each iteration, we use the insight that this can actually be done via a carefully implemented \emph{binary} search to do this efficiently.

\section{Numerical Experiments}
\label{sec:experiments}
We first consider optimizing a ``hard function''---an example of the type of function used to construct the lower bound in Theorem 2. This function class is parameterized by $\sigma$ and the dimension $T$; we denote these functions by $\bar{f}_{T,\sigma}$ (see Appendix \ref{sec:lb} for the definition). We compare our method to other commonly used first-order methods: gradient descent (GD), [standard] accelerated gradient descent (AGD), nonlinear conjugate gradients (CG), and the limited-memory BFGS (L-BFGS) algorithm. (Out of all these algorithms, only our method and GD offer theoretical guarantees for quasar-convex function minimization.)

We next evaluate our algorithm on real-world tasks: we use our algorithm to train a support vector machine (SVM) on the nine LIBSVM UCI binary classification datasets \citep{libsvm} (which are derived from the UCI ``Adult'' datasets \citep{uci}). The SVM loss function we use is a smoothed version of the hinge loss: $f(x) = \sum_{i=1}^n \phi_\alpha(1-b_ia_i^\top x)$, where $a_i\in \R^d, b_i = \pm 1$ are given by the training data (the $a_i$'s are the covariates and the $b_i$'s are the labels), and $\phi_{\alpha}(t) = 0$ for $t \le 0$, $\ff{t^2}{2}$ for $t \in [0,1]$, and $\ff{t^\A-1}{\A}+\ff{1}{2}$ for $t \ge 1$. When $\A=1$, $\phi_\A = \ff{t^2}{2}$ for all $t \ge 0$, and thus $\phi_\A$ and $f$ are convex. For all $\A \in (0, 1]$, $\phi_\A$ is smooth and $\A$-quasar-convex. Line searches for this function are inexpensive, as the quantities $b_ia_i^\top x$ need only be calculated once per outer loop iteration. Results are given in Table \ref{tab:expts}.

Finally, we evaluate on the problem of learning linear dynamical systems, which was shown to be quasar-convex (under certain assumptions) by \cite{weakquasiconvexity}. In this problem, we are given observations $\{(x_t, y_t)\}_{i=1}^T$ generated by the time-invariant linear system $h_{t+1} = Ah_t + Bx_t; y_t = Ch_t + Dx_t$, where $x_t, y_t \in \R$; $h_t \in \R^n$ is the \textit{hidden state} at time $t$; and $\Theta = (A,B,C,D)$ are the (unknown) parameters of the system. Informally, we seek to learn $\hat{\Theta}$ to minimize $\ff{1}{T}\sum_{i=1}^T (y_t - \hat{y}_t)^2$, where $\hat{h}_{t+1} = \hat{A}\hat{h}_t + \hat{B}x_t; \hat{y}_t = \hat{C}\hat{h}_t + \hat{D}x_t$, and $\hat{h}_0 = 0$. When parameterized in \textit{controllable canonical form}, this problem was shown to be quasar-convex on a subset of the domain near the optimum in \citep{weakquasiconvexity}. We describe this problem and our experimental approach in more detail in Appendix~\ref{sec:expt-details}. Representative plots are given in Figure~\ref{fig:lds}. Despite the nonconvexity, AGD performs quite well on this problem. Nonetheless, we observe that our method is competitive with AGD in terms of \textit{iteration} count; we use more \textit{function evaluations} due to the line search, but gradient evaluations are about twice as expensive in this setting, and the line search can also be parallelized. The design of better heuristics to speed up our method
is an interesting question for future empirical investigation.

In all experiments, we use adaptive step sizes for our method, as well as GD and AGD, as in practice $L$ may not be known \emph{a priori}. We do not use an initial guess for the line search.

\aboverulesep=0ex
\belowrulesep=0ex
\begin{table}[h]
	\tiny
	\setlength{\tabcolsep}{5pt}
	\centerfloat
		\begin{tabular}{| c | c | c | c | c | c | c |}
			\toprule
			$\downarrow$ Function~~~/~~~Algorithm $\rightarrow$ & Ours (Alg. 4) & Gradient Descent (GD) & Standard AGD & Nonlinear CG & L-BFGS \\
			\midrule
			$\bar{f}_{T,\sigma}$ ($\sigma = 10^{-1}, T = 10^2; \ep = 10^{-4}$) & 422;\, 1,451 & 336; 738 & 272;\, 869 & 312;\, 1,599 & 354;\, 1,778 \\
			$\bar{f}_{T,\sigma}$: ($\sigma = 10^{-4}, T = 10^3; \ep = 10^{-6}$) & 12,057;\, 55,357 & 18,607;\, 40,684 & 3,891;\, 12,399 & 1,251;\, 3,647 & 1,093;\, 6,554 \\
			$\bar{f}_{T,\sigma}$: ($\sigma = 10^{-6}, T = 10^3; \ep = 10^{-8}$) & 17,135;\, 167,447 & 275,572;\, 602,561 & 55,623;\, 177,247 & 10,007;\, 30,023 & 2,079;\, 12,476 \\
			\hline
			LIBSVM UCI ($\A = 1$; $\ep = 10^{-4}$) & 0.92;\, +0.017\% & 4.65;\, +0.036\% & --- & 0.46;\, +0.001\% & 0.29;\, +0.010\% \\
			LIBSVM UCI ($\A = 0.5$; $\ep = 10^{-4}$) & 1.32;\, +0.016\% & 4.78;\, +0.033\% & --- & 0.48;\, +0.001\% & 0.30;\, +0.011\% \\
			\bottomrule
		\end{tabular}
	\caption{
	Experimental results. The stopping criterion used is $\norm{\nabla f(x)}_\infty \le \ep$.
	For $\bar{f}_{T,\sigma}$ we report \textit{(\# iterations; \# function+gradient evals)}; the initial point is $x_0 = \vec{0}$.
	For LIBSVM UCI datasets, we report: the \textit{ratio} of the total number of iterations required compared to standard AGD, averaged over all 9 datasets and 3 different random initializations (shared across algorithms) per dataset, and the average final \textit{test classification accuracy difference} compared to AGD.}
	\label{tab:expts}
\end{table}

\begin{figure}[ht]
\center
\includegraphics[scale=0.375]{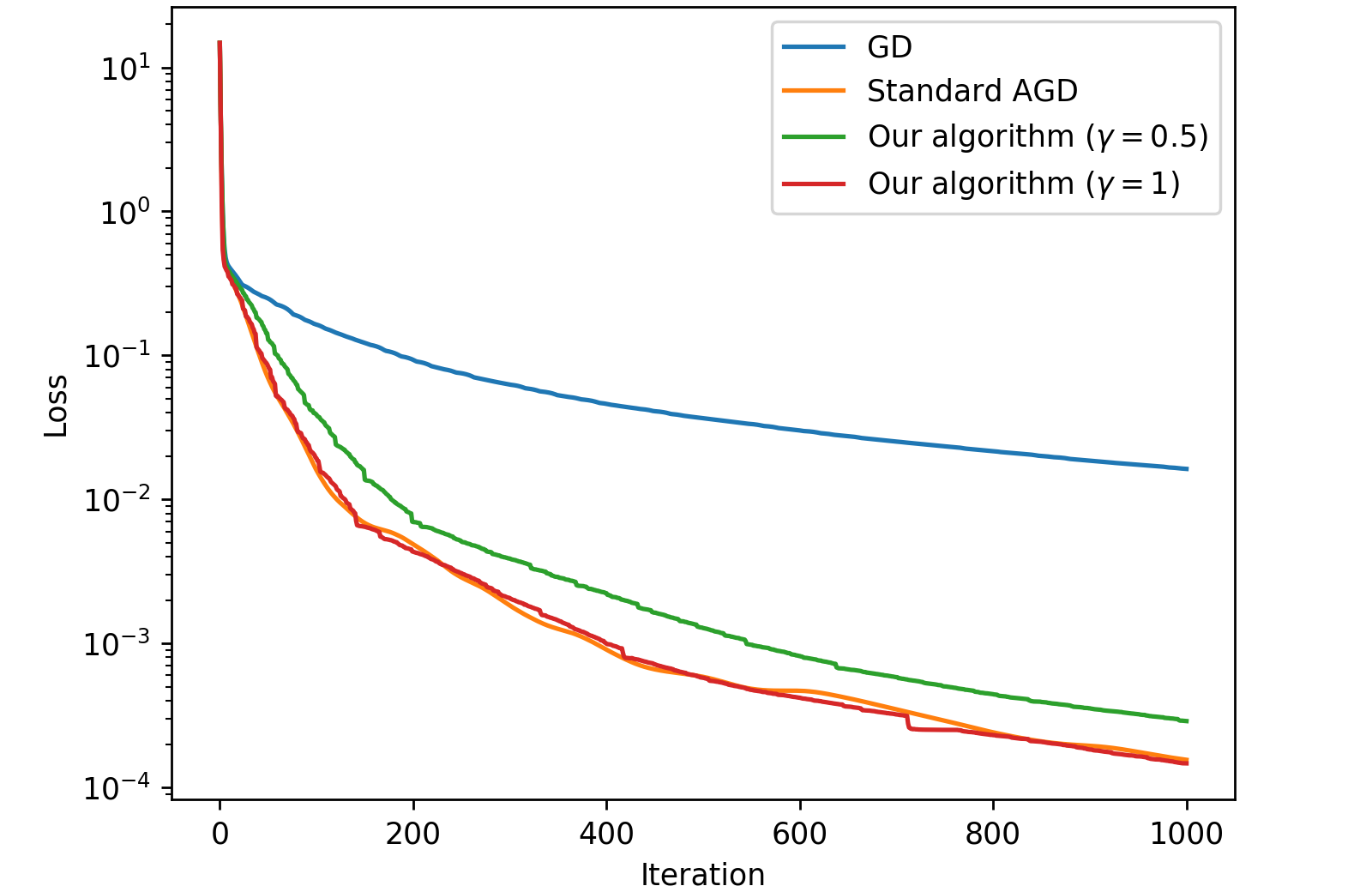}
\includegraphics[scale=0.375]{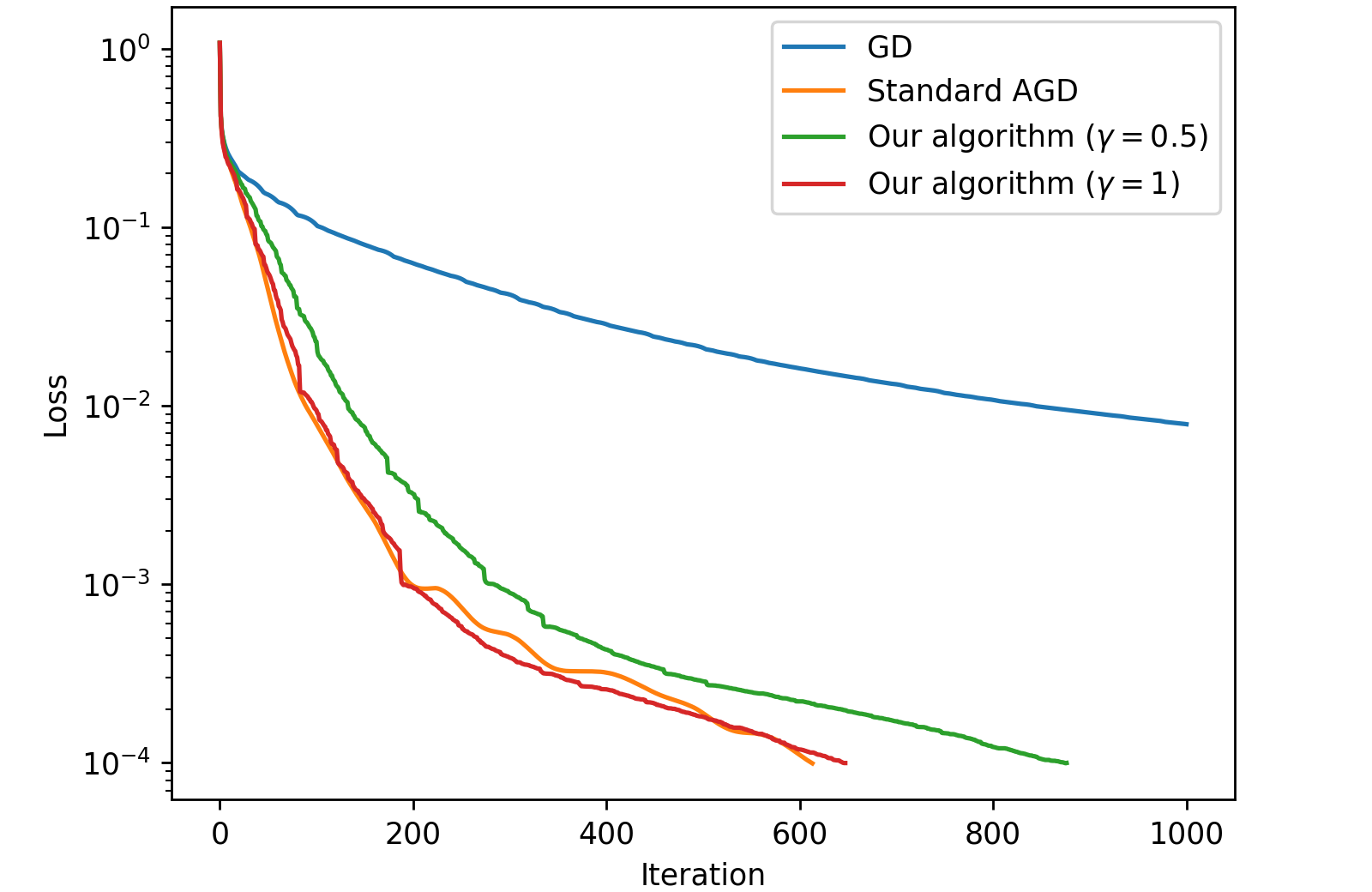}
\caption{Results on learning linear dynamical systems, for two different problem instances. We evaluate our method with $\gamma = \{0.5, 1\}$, and compare to GD and AGD. We run until the loss is $< 10^{-4}$ or 1000 iterations have been reached. Our method uses $\approx$4x as many total evaluations as AGD; for instance, in the first setting all methods run for 1000 iterations and use 2195, 3195, 13562 and 14626 total evaluations respectively (out of which 1000 are gradient evaluations).}
\label{fig:lds}
\end{figure}

\subsection{Additional Experimental Details}
\label{sec:expt-details}
We implement our algorithm, as well as AGD and GD, in Julia and Python.\footnote{Code for our implementation and experiments is available at \url{https://github.com/nimz/quasar-convex-acceleration}.}
We run our experiments on learning linear dynamical systems (LDS) using the PyTorch framework \citep{pytorch}. We generate the true parameters and the dynamical model inputs the same way as in \citep{weakquasiconvexity}, using the same parameters $n = 20, T = 500$. However, differently from this paper, we do not generate fresh sequences $\{(x_t, y_t)\}$ at each iteration, but instead generate 100 sequences at the beginning which are used throughout (so, it is no longer a stochastic optimization problem). As in \citep{weakquasiconvexity}, we actually minimize the loss $\ff{1}{|\mathcal{B}|} \sum_{(x,y) \in \mathcal{B}} \lt \ff{1}{T-T_1}\sum_{i>T_1} (y_t - \hat{y}_t)^2\rt$, where the outer summation is over the batch $\mathcal{B}$ of 100 sequences and the inner summation starts at time $T_1 \defeq T/4$, to mitigate the fact that the initial hidden state is not known. In addition, we generate the initial point $(\hat{A}_0, \hat{C}_0, \hat{D}_0)$ by perturbing the true dynamical system parameters $(A,C,D)$ with random noise; we additionally ensure that the spectral radius of $\hat{A}_0$ remains less than 1.

The quasar-convexity parameter $\gamma$ derived in \citep{weakquasiconvexity} for the LDS objective is defined as the supremum of the real part of a ratio of two degree-$n$ univariate polynomials over the complex unit circle. Therefore, it is difficult to calculate in practice. We instead simply evaluate different values of $\gamma$ in our experiments; we find that, while the choice of $\gamma$ does affect performance somewhat, our method does not break down even if the ``wrong'' choice is used.

\cite{weakquasiconvexity} presented two better-performing alternatives to fixed-stepsize SGD: SGD with gradient clipping or projected SGD. By contrast, as we use an adaptive step size, there is no need to clip gradients; in addition, we find projection to be unnecessary as the initial iterate we generate already has $\rho(\hat{A}_0) < 1$ by construction.

In the LDS experiments, we use forward difference to approximate the 1D gradients in the line search, since full gradient evaluations require backpropagation and are thus more expensive than function evaluations in this case; we do not find this to incur significant numerical error.

For the adaptive step sizes, we use a standard scheme in which the step size at iteration $k > 0$ [which we denote $\ff{1}{L^{(k)}}$] is initialized to the previous step size $\ff{1}{L^{(k-1)}}$ times a fixed value $\zeta_1 \ge 1$, and then multiplied by a fixed value $\zeta_2 \in (0, 1)$ until it is small enough so that the function value decrease is sufficient,\footnote{Specifically, for GD, we decrease the step size $\ff{1}{L^{(k)}}$ until the criterion $f(x^{(k+1)}) \le f(x^{(k)}) - \ff{1}{2 L^{(k)}} ||\G f(x^{(k)})||^2$ is satisfied; for AGD and our method, the criterion is $f(x^{(k+1)}) \le f(y^{(k)}) - \ff{1}{2 L^{(k)}} ||\G f(y^{(k)}||^2$. These criteria are guaranteed to hold when $L^{(k)} \ge L$.} where $\zeta_1, \zeta_2$ are constant hyperparameters. (This slightly generalizes Algorithm \ref{alg:backtracking}, which simply sets $\zeta_2 = 1/2$.) In all experiments for GD, AGD, and our method, we used $\zeta_1 = 1.1, \zeta_2 = 0.6$, and $L^{(0)} = 1$ (these values were only coarsely tuned; the algorithms are fairly insensitive to them when reasonable settings are used).

\section{Algorithm Analysis}

\label{sec:analysis-lemmas}
Here, we provide omitted proofs and details for Sections \ref{sec:acceleration_framework}-\ref{sec:algorithms}.

\subsection{Backtracking Step Size Search Analysis}\label{sec:onestep-proofs}

In Algorithm~\ref{alg:backtracking} (analyzed in Lemma~\ref{lem:stepsize}), we show how to efficiently compute an $L\ind{k}$ such that ${f(y\ind{k}-\ff{1}{L\ind{k}}\G f(y\ind{k}))} \le f(y\ind{k}) - \ff{1}{2L\ind{k}} \norm{\G f(y\ind{k})}^2$ holds in Line 3 of Algorithm~\ref{alg:agd}, even when the true Lipschitz constant $L$ is unknown. This is done using standard backtracking line search; we provide the details of the algorithm and analysis for completeness (Algorithm \ref{alg:backtracking}). [Note that $\texttt{run\_halving}$ means we halve $\hat{L}$ repeatedly as long as the descent inequality is satisfied---corresponding to \emph{doubling} the step size each time.]

\setcounter{AlgoLine}{0}
\begin{algorithm}[H]
\setstretch{0.95}
\SetAlgoLined
\caption{\texttt{BacktrackingSearch}($f, \zeta, x, \texttt{run\_halving}=\texttt{False})$}
\label{alg:backtracking}
\SetKwInOut{Input}{input}
\textit{Assumptions}: $f : \R^n \ra \R$ is $L$-smooth; $x \in \R^n$; $\zeta > 0$ and ($\zeta < 2L$ or \texttt{run\_halving=False})
\vskip 0ex
\nl $\hat{L} \leftarrow \zeta$ \\
\nl \If{\texttt{run\_halving}}{
 \nl \While{$f(x - \ff{1}{\hat{L}} \G f(x)) \le f(x) - \ff{1}{2\hat{L}} \norm{\G f(x)}^2$}{
 \nl $\hat{L} \leftarrow \hat{L}/2$
 }
 \nl $\hat{L} \leftarrow 2\hat{L}$
}
\nl \While{$f(x - \ff{1}{\hat{L}} \G f(x)) > f(x) - \ff{1}{2\hat{L}} \norm{\G f(x)}^2$}{
\nl $\hat{L} \leftarrow 2\hat{L}$
}
\nl \Return{$\hat{L}$}
\BlankLine
\end{algorithm}

\begin{lem}
\label{lem:stepsize}
Let $L$ be the minimum real number such that $f: \R^n \ra \R$ is $L$-smooth. Then, Algorithm~\ref{alg:backtracking} computes an ``inverse step size'' $\hat{L}$ such that $f\lt x - \ff{1}{\hat{L}} \G f(x)\rt \le f(x) - \ff{1}{2\hat{L}} \norm{\G f(x)}^2$. If $\texttt{run\_halving}$ is $\texttt{False}$, $\hat{L} \in [\zeta, 2L)$ and Algorithm~\ref{alg:backtracking} uses at most $\ceil{\log_2^+ \ff{L}{\zeta}}+3$ function and gradient evaluations. If $\texttt{run\_halving}$ is $\texttt{True}$, $\hat{L} \in (0, 2L)$ and Algorithm~\ref{alg:backtracking} uses at most $\left\lceil \log_2^+ \max\left\{ \ff{L}{\zeta}, \ff{\zeta}{L} \right\} \right\rceil +3$ evaluations.
\end{lem}
\begin{proof}
We use the elementary fact that if $f$ is $L$-smooth, then for any $x \in \R^n$ if we define $y \defeq x - \frac{1}{{L}} \grad f(x)$, then $f(y) \le f(x) - \frac{1}{2{L}} \norm{\grad f(x)}^2$ (for example, see \citep{Nesterov04} for proof).

In Algorithm~\ref{alg:backtracking}, we use $\zeta$ as the initial guess for $\hat{L}$, and when $\texttt{run\_halving}$ is $\texttt{False}$
simply double $\hat{L}$ until the desired condition holds. Note that since an $L$-smooth function is also $L'$-smooth for any $L' \ge L$, the desired condition holds for any $L' \ge L$; we will use $L$ to denote the minimum value of $L'$ such that $f$ is $L'$-smooth.
We need to double $\hat{L}$ at most $\ceil{\log_2^+(L/\zeta)}$ times until it is greater than or equal to $L$, so the while loop condition is checked at most $\ceil{\log_2^+(L/\zeta)}+1$ times. Since we stop increasing $\hat{L}$ when the desired condition holds, and it holds whenever $\hat{L} \ge L$, the final value of $\hat{L}$ will be less than $2L$. Each check of the while loop condition requires computing $f\lt x - \ff{1}{\hat{L}} \G f(x)\rt$ for the current value of $\hat{L}$; we also need to compute $f(x)$ and $\G f(x)$ at the beginning.

When \texttt{run\_halving} is \texttt{True} (branch in Line 2), we also halve the initial guess $\hat{L}$ until the condition no longer holds,
then double this value to recover the last value of $\hat{L}$ for which the condition holds. Similarly, at most $\ceil{\log_2^+ \ff{\zeta}{L}}$ iterations of this halving procedure are required. Finally, notice that if the while loop condition in Line 3 ever evaluates to \texttt{True}, then the value $\hat{L}$ at the end of Line 5 will satisfy $f(x-\ff{1}{\hat{L}}\G f(x)) \le f(x) - \ff{1}{2\hat{L}}\norm{\G f(x)}^2$, meaning that the while loop on Line 6 will immediately terminate.
\end{proof}

Note that the constant 2 used in Algorithm~\ref{alg:backtracking} is arbitrary; we can use any constant larger than 1 to multiplicatively increase $\hat{L}$ each time, which merely changes both the runtime and the final upper bound on $\hat{L}$ by a constant factor. The term ``backtracking'' is used because increasing $\hat{L}$ corresponds to decreasing the ``step size.''

\subsection{Analysis of Algorithm~\ref{alg:linesearch}}
\label{sec:linesearch-proofs}

We first present a simple fact that is useful in our proofs of Lemmas~\ref{lem:ak_existence} and \ref{lem:linesearch}.

\begin{fact}
\label{fact:minimizer}
Suppose that $a <b$, $g : \R \ra \R$ is differentiable, and that $g(a) \ge g(b)$.
Then, there is a $c \in (a,b]$ such that $g(c) \le g(b)$ and either $g'(c) = 0$, or $c = b$ and $g'(c) \le 0$. 
\end{fact}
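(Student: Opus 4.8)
The plan is to prove \Cref{fact:minimizer} by a straightforward compactness/extreme-value argument applied to $g$ restricted to the closed interval $[a,b]$. Since $g$ is differentiable, it is in particular continuous, so on the compact interval $[a,b]$ it attains a global minimum at some point $c \in [a,b]$. The first step is to observe that because $g(a) \ge g(b)$, the minimizer $c$ can be chosen to lie in $(a,b]$: if the only minimizers are at $a$, then $g(b) \le g(a) \le \min_{[a,b]} g = g(a)$ forces $g(b) = g(a)$, so $b$ is also a minimizer and we simply take $c = b$. In all cases we get $c \in (a,b]$ with $g(c) \le g(x)$ for all $x \in [a,b]$, and in particular $g(c) \le g(b)$.

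Next I would analyze the two cases for the location of $c$. If $c \in (a,b)$, then $c$ is an interior local minimum of a differentiable function, so by Fermat's stationary point theorem $g'(c) = 0$, which is the first alternative in the conclusion. If instead $c = b$, then $b$ is a minimizer of $g$ on $[a,b]$, so for all $x \in [a,b)$ we have $g(x) \ge g(b)$; taking the one-sided derivative from the left, $g'(b) = \lim_{x \to b^-} \frac{g(x) - g(b)}{x - b} \le 0$, since the numerator is $\ge 0$ and the denominator is $< 0$. This gives the second alternative: $c = b$ and $g'(c) \le 0$. Combining, in every case we have produced $c \in (a,b]$ with $g(c) \le g(b)$ and either $g'(c) = 0$ or ($c = b$ and $g'(c) \le 0$), which is exactly the claim.

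I do not anticipate a genuine obstacle here — the statement is an elementary real-analysis fact. The only point requiring minor care is the edge case where every global minimizer on $[a,b]$ sits at the left endpoint $a$; the hypothesis $g(a) \ge g(b)$ is precisely what rules this out as a problem, since it forces $g(b) = g(a)$ and lets us relocate the witness to $c = b$. One should also be careful to phrase the one-sided derivative argument at $c = b$ correctly (sign of the difference quotient), but this is routine.
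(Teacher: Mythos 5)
Your proof is correct and takes essentially the same route as the paper: both rely on the extreme value theorem to locate a global minimizer of $g$ on $[a,b]$ and then apply Fermat's stationary-point theorem (or a one-sided difference-quotient argument at $b$). The paper phrases the case split as ``is $g'(b) \le 0$ or not,'' while you phrase it as ``is the minimizer interior or at $b$,'' but these are the same argument dressed slightly differently, and your handling of the edge case where the minimizer sits at $a$ is exactly the observation the hypothesis $g(a) \ge g(b)$ is there to enable.
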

\begin{proof}
If $g'(b) \le 0$, the claim is trivially true. If not, then $g'(b) > 0$, so the minimum value of $g$ on $[a,b]$ is strictly less than $g(b)$
(and therefore strictly less than $g(a)$ as well).
By continuity of $g$ and the extreme value theorem, $g$ must therefore attain its minimum on $[a,b]$ at some point in $c \in (a,b)$.
By differentiability of $g$ and the fact that $c$ minimizes $g$, we then have $g'(c) = 0$.
\end{proof}
\begin{fact}
\label{fact:line_lipschitz}
Suppose $f$ is $L$-smooth. Define $g(\A) \defeq f(\A x + (1-\A)v)$; then, $g$ is $L\norm{x-v}^2$-smooth.
\end{fact}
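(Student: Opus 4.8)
\textbf{Proof proposal for Fact~\ref{fact:line_lipschitz}.} The plan is a one-line chain rule computation followed by Cauchy--Schwarz and an application of the $L$-smoothness hypothesis on $f$. Write $w \defeq x - v$, so that $g(\A) = f(v + \A w)$ as a function of the scalar $\A$. First I would differentiate: by the chain rule, $g'(\A) = \grad f(v + \A w)^\top w$ for all $\A \in \R$. In one dimension, saying that $g$ is $M$-smooth is exactly saying that $g'$ is $M$-Lipschitz, so it suffices to bound $|g'(\A) - g'(\A')|$ for arbitrary scalars $\A, \A'$.

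Next I would estimate
\[
|g'(\A) - g'(\A')| = \bigl|\bigl(\grad f(v + \A w) - \grad f(v + \A' w)\bigr)^\top w\bigr| \le \norm{\grad f(v + \A w) - \grad f(v + \A' w)} \cdot \norm{w},
\]
using Cauchy--Schwarz in the last step. Then, applying the $L$-smoothness of $f$ to the two points $v + \A w$ and $v + \A' w$, whose difference is $(\A - \A') w$, gives $\norm{\grad f(v + \A w) - \grad f(v + \A' w)} \le L |\A - \A'| \cdot \norm{w}$. Combining the two displays yields $|g'(\A) - g'(\A')| \le L \norm{w}^2 |\A - \A'| = L \norm{x - v}^2 \, |\A - \A'|$, which is precisely the statement that $g$ is $L\norm{x-v}^2$-smooth.

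I do not anticipate any obstacle here: the only content is the chain rule, Cauchy--Schwarz, and the definition of smoothness, so the proof is essentially the three displayed lines above. (One minor point worth a sentence is that smoothness of a one-variable function is equivalent to Lipschitz continuity of its derivative, which is what makes the bound on $|g'(\A) - g'(\A')|$ suffice.)
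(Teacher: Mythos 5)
Your proof is correct and follows essentially the same route as the paper's: chain rule to compute $g'$, Cauchy--Schwarz, then $L$-smoothness of $f$ applied to points on the line. The only cosmetic difference is your shorthand $w = x - v$.
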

\begin{proof}
By $L$-smoothness of $f$, $\norm{\G f(x) - \G f(y)} \le L\norm{x-y}$ for all $x,y$. So,
\begin{flalign*}
\norm{\G f(y(\A_1)) - \G f(y(\A_2))} &= \norm{\G f(\A_1 x + (1-\A_1)v) - \G f(\A_2 x + (1-\A_2)v)} \\
&\le L\norm{(\A_1-\A_2)x-(\A_1-\A_2)v} = L|\A_1-\A_2|\norm{x-v}.
\end{flalign*}
By definition of $g$ and the Cauchy-Schwarz inequality,
\begin{flalign*}
|g'(\A_1) - g'(\A_2)| &= |\G f(y(\A_1))^\top(x-v) - \G f(y(\A_2))^\top(x-v)| \\ &\le \norm{\G f(y(\A_1)) - \G f(y(\A_2))}\norm{x-v},
\end{flalign*}
so $|g'(\A_1)-g'(\A_2)| \le L\norm{x-v}^2|\A_1-\A_2|$ as desired.
\end{proof}

\noindent Using Lemma~\ref{lem:ak_existence} and Fact~\ref{fact:line_lipschitz}, we prove Lemma~\ref{lem:linesearch}.

\linesearch*

\begin{proof}
\renewcommand{\lo}{\textbf{lo}}
\renewcommand{\hi}{\textbf{hi}}
Define $\Ll \defeq L\norm{x-v}^2$; by Fact \ref{fact:line_lipschitz}, $g$ is $\Ll$-smooth.
Note that if $p+\epp \ge \Ll$ and $g'(\A) = 0$, then by $\Ll$-smoothness of $g$, we have ${g'(1) \le \epp + p}$. So, it must be the case that $p+\epp < \Ll$ if Algorithm~\ref{alg:linesearch} enters the binary search phase.
Thus, if $g'(1) > \tilde{\ep}+p$,
then by Lemma~\ref{lem:stepsize} and the definition of $\tau$ we have $g'(\lhi) > 0$ and $g(\lhi)-g(1) \le -\ff{(\epp+p)^2}{4\Ll}$.
Recall that the loop termination condition in Algorithm~\ref{alg:linesearch} is $\A (g'(\A) - \A p) \le c(g(1) - g(\A)) + \epp$.
First, we claim that the invariants $g(\lo) > g(\lhi)$, $g(\hi) \le g(\lhi)$, and $g'(\hi) > \tilde{\ep}$ hold at the start of every loop iteration.
This is true at the beginning of the loop, since otherwise the algorithm would return before entering it.
In the loop body, $\hi$ is only ever set to a new value $\A$ if $g(\A) \le g(\lhi)$.
If the loop does not subsequently terminate, this also implies $g'(\A) > \epp$ since then
\[\A (g'(\A) - \A p) > c(g(1) - g(\A)) + \epp \ge c(g(1) - g(\lhi)) + \epp \ge \epp~.\]
Similarly, $\lo$ is only ever set to a new value $\A$ if $g(\A) > g(\lhi)$.
Thus, these invariants indeed hold at the start of each loop iteration.

Now, suppose $\A = (\lo + \hi)/2$ does not satisfy the termination condition.
If $g(\A) \le g(\lhi)$, this implies $g'(\A) > \tilde{\ep}$.
As $g(\lo) > g(\lhi) \ge g(\A)$, by Fact \ref{fact:minimizer}, there must be an $\hat{\A} \in (\lo, \A)$ with $g'(\hat{\A}) = 0$ and $g(\hat{\A}) \le g(\lhi)$ [and thus satisfying the termination condition].
The algorithm sets $\hi$ to $\A$, which will keep $\hat{\A}$ in the new search interval $[\lo, \A]$.

Similarly, if $g(\A) > g(\lhi)$, then since $g(\lhi) \ge g(\hi)$ and $g'(\hi) > 0$, there must be an $\hat{\A} \in (\A, \hi)$ with $g'(\hat{\A}) = 0$ and $g(\hat{\A}) \le g(\lhi)$ [and thus satisfying the termination condition], by applying Fact \ref{fact:minimizer}.
The algorithm sets $\lo$ to $\A$, which will keep $\hat{\A}$ in the search interval.
Thus, there is always at least one point $\hat{\A} \in [\lo, \hi]$ satisfying the termination condition.

In addition, note that if an interval $[z_1, z_2] \subseteq [0,1]$ of points satisfies the termination condition, then at every loop iteration, either the entire interval lies in $[\lo, \hi]$ or none of the interval does, i.e. either $[z_1,z_2] \subseteq [\lo, \hi]$ or $[z_1,z_2] \cap [\lo,\hi] = \emptyset$. The reason is that if a point $\A$ satisfies the termination condition we terminate immediately. If not, then $\A$ is not in an interval of points satisfying the termination condition, so either $z_2 < \A$ or $z_1 > \A$. Thus, all intervals of points satisfying the termination condition either disjointly lie in the set of points that remain in our search interval, or the set of points we throw away (i.e. an interval of satisfying points never gets split).

Suppose that $\A \in [0, \tau]$, $g'(\A) = 0$, and $g(\A) \le g(\lhi)$.
By $\Ll$-Lipschitz continuity of $g'$, we have that for all $t$, $|g'(t)| = |g'(t)-g'(\A)| \le \Ll |t-\A|$ and $g(t) - g(1) \le g(t) - g(\lhi) \le g(t)-g(\A) \le \ff{\Ll}{2}(t-\A)^2$. So, for all $t \in [\A/2, \tau]$, 
\begin{flalign*}
t (g'(t)-tp) + c (g(t)-g(\lhi)) &\le t(\Ll |t-\A| - (t-\A)p) + \tfrac{c\Ll}{2}(t-\A)^2 - \A t p
\\
&\le \lt \Ll(1+\tfrac{c}{2})+p\rt \cdot |t-\A| - \A^2 p/2~.
\end{flalign*}
Suppose $|t-\A| \le \f{\A^2 p/2 + \tilde{\ep}}{\Ll(1+\ff{c}{2})+p}$. Then, $\lt \Ll(1+\tfrac{c}{2})+p\rt \cdot |t-\A| - \A^2 p/2 \le \tilde{\ep}$.

So, if $\A \in [0, \tau]$, $g'(\A) = 0$, and $g(\A) \le g(\lhi)$, then all $t \in \Big[\A - \ff{\A^2 p/2 + \tilde{\ep}}{\Ll(1+c/2)+p}, 
\A + \ff{\A^2 p/2 + \tilde{\ep}}{\Ll(1+c/2)+p}\Big] \cap [\A/2, \tau]$
also satisfy the termination condition $t(g'(t)-tp) + c(g(t) - g(1)) \le \epp$. If $\ff{\A^2 p/2 + \epp}{\Ll(1+c/2)+p} \le \A/2$, the lower bound of the first interval is $\ge \A/2$ and the intersection of the two intervals contains $[\A-\ff{\A^2 p/2 + \epp}{\Ll(1+c/2)+p}, \A]$. If not, then the first interval contains $[\A/2, \A]$ as does the second interval, so the intersection of the two intervals contains $[\A/2, \A]$. Therefore, the length of the interval of points satisfying the termination condition is at least $\min\{\ff{\A}{2}, \ff{\A^2 p/2 + \tilde{\ep}}{\Ll(1+c/2)+p}\}$.

If $g'(\A) = 0$ and $g(\A) \le g(\lhi)$, then $g(0) \le g(\lhi) + \ff{\Ll}{2} \A^2$ by $\Ll$-smoothness. Since ${g(\lhi) + \ff{(p+\tilde{\ep})^2}{4\Ll}} \le g(1) < g(0)$, this implies $\A \ge \ff{p+\epp}{\Ll\sqrt{2}}$.
Therefore, the interval length is at least \linebreak
$\min\left\{\f{p+\epp}{2\sqrt{2}\Ll}, \f{p^3 / (4\Ll^2) + \epp}{(1+c/2)\Ll+p}\right\} \ge
\min\left\{\f{p+\epp}{\Ll\sqrt{8}}, \f{p^3 / (4\Ll^2) + \epp}{(2+c/2)\Ll}\right\} \ge \f{p^3 / (4\Ll^2) + \epp/\sqrt{2}}{(2+c/2)\Ll}$.

$\f{p^3 / (4\Ll^2) + \epp/\sqrt{2}}{(2+c/2)\Ll} \ge \max\left\{\f{p^3}{(8+2c)\Ll^3}, \f{\tilde{\ep}}{(4+c)\Ll}\right\} =
\max\left\{\f{b^3}{(8+2c)L^3}, \f{\tilde{\ep}}{(4+c)\Ll}\right\}$, using the fact that $\Ll = L\norm{x-v}^2$ and $p = b\norm{x-v}^2$.

Since we know at least one such interval of points satisfying the termination condition is always contained within our current search interval, this implies that if we run the algorithm until the current search interval has length at most $\max\left\{\ff{b^3}{(8+2c)L^3}, \ff{\tilde{\ep}}{(4+c)L\norm{x-v}^2}\right\}$, we will terminate with a point satisfying the necessary condition. As we halve our search interval (which is initially $[0,\lhi] \subset [0,1]$) at every iteration, we must therefore terminate in at most
$\ceil{\log^+_2 \lt (4+c) \min\left\{\ \ff{2L^3}{b^3}, \ff{L\norm{x-v}^2}{\tilde{\ep}} \right\} \rt}$ iterations.

Before each loop iteration (including the last which does not get executed when the termination condition is satisfied),
we compute $g(\A)$ and $g'(\A)$, so there are two function and gradient evaluations per iteration.
Before the loop begins, we require (at most) three function and gradient evaluations to evaluate $g(0), g(1), g'(1)$,
in addition to the evaluations required to compute $\tau$. [This becomes five if an initial guess is provided, as then we also compute $g(\text{guess})$, $g'(\text{guess})$.]
As argued earlier, if $p+\epp \ge \hat{L}$, Algorithm~\ref{alg:linesearch} terminates before Line 3.
Thus, we compute $\tau$ only if $g'(1) \ge p+\epp$, in which case Lemma~\ref{lem:stepsize}
says that at most $\ceil{\log_2(\ff{\hat{L}}{p+\epp})}+1$ additional function evaluations are required to compute $\tau$.
Note that $\ff{\hat{L}}{p+\epp} \le \min\left\{\ff{\Ll}{p}, \ff{\Ll}{\epp}\right\}$ since $p, \epp \ge 0$; thus,
 $\ceil{\log_2(\ff{\Ll}{p+\epp})} \le \ceil{\log_2 \lt \min\left\{\ff{\Ll}{p}, \ff{\Ll}{\epp}\right\}\rt} \le \ceil{\log^+_2 \lt (4+c) \min\left\{\ \ff{2L^3}{b^3}, \ff{L\norm{x-v}^2}{\tilde{\ep}} \right\} \rt}$.

Thus, the total number of function and gradient evaluations made is at most \linebreak
$8 + 3\ceil{\log^+_2 \lt (4+c) \min\left\{\ \ff{2L^3}{b^3}, \ff{L\norm{x-v}^2}{2\tilde{\ep}} \right\} \rt}$.

Note that we define $\min\{x, +\infty\} = x$ for any $x \in \R \cup \{\pm\infty\}$. Note also that if $b = 0$ and $L = 0$, or if $\tilde{\ep} = 0$ and either $L = 0$ or $x = v$, the above expression is technically indeterminate; however, observe that $g$ is constant in all of these cases, so at most one gradient evaluation is performed and the point $\A = 1$ is returned  (or, if an initial guess is passed in, then there are three evaluations — g(guess), g'(guess), and g(1) — and the point ``guess'' is returned).
\end{proof}

\subsection{Non-Strongly Quasar-Convex Algorithm Analysis}
\label{sec:nonstrong-analysis}
\begin{lem}
Suppose $\w^{(-1)} = 1$ and $\w^{(k)} = \ff{1}{2} \lt \w^{(k-1)} \lt \sqrt{\left( \w^{(k-1)}\right)^2+4}-\w^{(k-1)}\rt \rt$ for $k \ge 0$.
In the following sub-lemmas, we prove various simple properties of this sequence:
\begin{sublemma}
\label{lem:sk}
$\w^{(k)} \le \f{4}{k+6}$ for all $k \ge 0$.
\end{sublemma}
\end{lem}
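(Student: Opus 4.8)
The plan is to analyze the recursion $\w^{(k)} = \tfrac{1}{2}\bigl(\w^{(k-1)}\bigl(\sqrt{(\w^{(k-1)})^2+4}-\w^{(k-1)}\bigr)\bigr)$ by a direct induction on $k$, showing $\w^{(k)} \le \tfrac{4}{k+6}$. First I would record the base case $k=0$: from $\w^{(-1)}=1$ we compute $\w^{(0)} = \tfrac12(\sqrt5 - 1) \approx 0.618$, and indeed $0.618 \le 4/6 = 0.\overline{6}$, so the claim holds at $k=0$. (It is also worth noting in passing that the $\w^{(k)}$ are positive and decreasing — this follows since $0 < \w(\sqrt{\w^2+4}-\w) < 2\w$ for $\w>0$, i.e.\ $0 < \w^{(k)} < \w^{(k-1)}$ — which keeps all subsequent algebra well-defined, in particular the square roots.)

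For the inductive step, assume $\w^{(k-1)} \le \tfrac{4}{k+5}$ (equivalently, for $k\ge 1$, $\tfrac{1}{\w^{(k-1)}} \ge \tfrac{k+5}{4}$). The cleanest way to propagate the bound is to work with reciprocals. A short computation shows that the recursion is equivalent to
\[
\frac{1}{\w^{(k)}} = \frac{2}{\w^{(k-1)}\bigl(\sqrt{(\w^{(k-1)})^2+4}-\w^{(k-1)}\bigr)}
= \frac{\sqrt{(\w^{(k-1)})^2+4}+\w^{(k-1)}}{2\w^{(k-1)}}
= \frac12 + \frac12\sqrt{\frac{1}{(\w^{(k-1)})^2}+1},
\]
where in the second equality I multiplied numerator and denominator by $\sqrt{(\w^{(k-1)})^2+4}+\w^{(k-1)}$ and used $(\sqrt{\w^2+4}-\w)(\sqrt{\w^2+4}+\w)=4$. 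Writing $u \defeq 1/\w^{(k-1)}$, this says $1/\w^{(k)} = \tfrac12 + \tfrac12\sqrt{u^2+1}$. Since $\sqrt{u^2+1} \ge u$, we get $1/\w^{(k)} \ge \tfrac12 + \tfrac{u}{2} \ge \tfrac12 + \tfrac{k+5}{8}$ by the inductive hypothesis; but this only yields a bound that grows like $k/8$, which is too weak. The fix is to use the sharper estimate $\sqrt{u^2+1} \ge u + \tfrac{c}{u}$ for a suitable constant and for $u$ bounded below — indeed $\sqrt{u^2+1} - u = \tfrac{1}{\sqrt{u^2+1}+u} \ge \tfrac{1}{2u+1}$, so $1/\w^{(k)} \ge \tfrac12 + \tfrac{u}{2} + \tfrac{1}{2(2u+1)}$. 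Alternatively, and I think more robustly, I would directly verify the claimed bound by induction without passing through reciprocals: it suffices to show that if $\w^{(k-1)} \le \tfrac{4}{k+5}$ then $\tfrac12\w^{(k-1)}\bigl(\sqrt{(\w^{(k-1)})^2+4}-\w^{(k-1)}\bigr) \le \tfrac{4}{k+6}$. The function $\phi(w) \defeq \tfrac12 w(\sqrt{w^2+4}-w)$ is increasing in $w>0$ (its derivative is positive, as $\phi(w) = 2w/(\sqrt{w^2+4}+w)$ is manifestly increasing), so it is enough to check $\phi\bigl(\tfrac{4}{k+5}\bigr) \le \tfrac{4}{k+6}$; substituting $w = 4/(k+5)$ and simplifying reduces this to an elementary polynomial inequality in $k$ that holds for all $k \ge 0$.

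The main obstacle, as the reciprocal computation above shows, is that the naive monotonicity bound $\sqrt{u^2+1}\ge u$ loses exactly the $\Theta(1/k)$ correction term that makes the difference between $1/\w^{(k)} \sim k/8$ and the desired $1/\w^{(k)} \gtrsim (k+6)/4$. So the crux is to retain enough of the $\tfrac{1}{\sqrt{u^2+1}+u}$ term: one must verify that the increment $1/\w^{(k)} - 1/\w^{(k-1)} = \tfrac12 + \tfrac12(\sqrt{u^2+1}-u) = \tfrac12 + \tfrac{1}{2(\sqrt{u^2+1}+u)}$ is at least $\tfrac14$ whenever $1/\w^{(k-1)} \le (k+5)/4$, which amounts to $\sqrt{u^2+1}+u \le 2$, i.e.\ $u \le 3/4$ — but this is false for large $k$, so in fact the increment is close to $\tfrac14$ from above only for small $k$ and one needs the exact telescoped identity $1/\w^{(k)} = \tfrac12 + \tfrac12\sqrt{(1/\w^{(k-1)})^2+1}$ together with a careful induction, or else the monotonicity argument of the previous paragraph. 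I would present the monotonicity-of-$\phi$ argument as the cleanest route: it isolates all the difficulty into a single explicit one-variable inequality $\phi(4/(k+5)) \le 4/(k+6)$, which can be cleared of radicals by squaring once and checked directly.
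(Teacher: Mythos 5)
Your preferred route---establish that $\phi(w) \defeq \tfrac12 w\bigl(\sqrt{w^2+4}-w\bigr) = \tfrac{2w}{\sqrt{w^2+4}+w}$ is increasing in $w>0$ and then verify $\phi(4/(k+5)) \le 4/(k+6)$---is correct and yields a clean proof. Substituting $w = 4/(k+5)$ gives $\phi(4/(k+5)) = 4/\bigl(\sqrt{(k+5)^2+4}+2\bigr)$, and since $\sqrt{(k+5)^2+4} \ge k+5 > k+4$, this is less than $4/(k+6)$, closing the induction. This is a genuinely different route from the paper, which instead applies $\sqrt{x^2+1}\le 1+x^2/2$ to obtain $\w^{(k)}\le\w^{(k-1)}\bigl(1-\w^{(k-1)}/4\bigr)$ and then invokes the one-variable inequality $x(1-x/4)<4/(y+1)$ for $0\le x\le 4/y$; interestingly, the paper already uses your monotonicity observation in the companion lower bound (Lemma~\ref{lem:sk2}) but not here, so your route is arguably more uniform.

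However, your reciprocal identity contains an algebraic slip, and it vitiates the surrounding ``obstacle'' discussion. The correct identity is $1/\w^{(k)} = \tfrac12 + \tfrac12\sqrt{4u^2+1}$, not $\tfrac12+\tfrac12\sqrt{u^2+1}$, where $u=1/\w^{(k-1)}$: one has $\sqrt{w^2+4}/(2w) = \tfrac12\sqrt{4/w^2+1}$, so the factor of $4$ under the radical is missing in your computation (a sanity check at $w=1$ gives $1/\w^{(k)}=(\sqrt5+1)/2\approx 1.618$, which your formula does not reproduce). With the corrected identity, the ``naive'' bound $\sqrt{4u^2+1}\ge 2u$ gives $1/\w^{(k)} \ge \tfrac12 + u \ge \tfrac12 + \tfrac{k+5}{4} = \tfrac{k+7}{4}$, which already beats the target $\tfrac{k+6}{4}$. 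So the ``main obstacle'' you describe---that the naive bound loses a $\Theta(1/k)$ correction---is an artifact of the wrong formula and not a real difficulty; that paragraph should be removed. Your fallback $\phi$-monotonicity argument stands on its own and is fine, but the preceding analysis of why the reciprocal approach supposedly fails is misleading and should not be presented as the crux.
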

\begin{proof}
The case $k = 0$ is clearly true as $\w^{(0)} = \ff{\sqrt{5}-1}{2} < \ff{2}{3}$. Suppose that $\w^{(i-1)} \le \f{4}{i+5}$ for some $i \ge 1$. $\w^{(i)} = \f{\w^{(i-1)}}{2} \lt \sqrt{\left(\w^{(i-1)}\right)^2 + 4} - \w^{(i-1)} \rt$. Using the fact that $\sqrt{x^2 + 1} \le 1+\ff{x^2}{2}$ for all $x$ and the fact that $\w^{(i-1)} \in (0,1)$,
\[\w^{(i)} \le \f{\w^{(i-1)}}{2} \lt 2 - \w^{(i-1)} + \f{\left(\w^{(i-1)}\right)^2}{2} \rt \le \w^{(i-1)}\lt 1- \f{\w^{(i-1)}}{4}\rt.\]

If $y > 0$, then $x(1-\ff{x}{4}) < \ff{4}{y+1}$ for all $0 \le x \le \ff{4}{y}$. Thus, setting $y = i+5$ yields that $\w^{(i)} \le \ff{4}{i+6}$ by the inductive hypothesis.
\end{proof}

\begin{sublemma}
\label{lem:sk2}
$\w^{(k)} \ge \f{1}{k+2}$ for all $k \ge 0$.
\end{sublemma}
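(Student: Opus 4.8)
The plan is to rewrite the irrational recursion in closed algebraic form and then telescope a bound on $1/\w^{(k)}$. Write $w \defeq \w^{(k-1)}$ and $u \defeq \w^{(k)}$, so the defining relation reads $u = \ff{1}{2} w\lt \sqrt{w^2+4} - w\rt$. First I would observe that $\w^{(k)} \in (0,1)$ for every $k \ge 0$: positivity is immediate from the recursion, and since $\sqrt{w^2+4} < w+2$ for $w > 0$, the recursion gives $\w^{(k)} < \w^{(k-1)} \le 1$ by induction from $\w^{(-1)} = 1$. Next, rearranging the defining relation to $\ff{2u}{w} + w = \sqrt{w^2+4}$ and squaring eliminates the radical and simplifies to the clean identity
\[
\lt \w^{(k)}\rt^2 = \lt \w^{(k-1)}\rt^2\lt 1 - \w^{(k)}\rt \qquad \text{for all } k \ge 0,
\]
equivalently $\w^{(k-1)} = \w^{(k)}/\sqrt{1-\w^{(k)}}$ (taking positive square roots, which is valid since $\w^{(k)} \in (0,1)$).

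From this identity I would compute
\[
\f{1}{\w^{(k)}} - \f{1}{\w^{(k-1)}} = \f{1 - \sqrt{1-\w^{(k)}}}{\w^{(k)}} \le 1,
\]
where the inequality uses the elementary bound $1 - \sqrt{1-t} \le t$ for $t \in [0,1]$ (i.e.\ $\sqrt{1-t} \ge 1-t$). Telescoping this from $k' = 0$ to $k$, and using $\ff{1}{\w^{(-1)}} = 1$, gives $\f{1}{\w^{(k)}} \le 1 + (k+1) = k+2$, which is precisely the claim $\w^{(k)} \ge \f{1}{k+2}$. As a sanity check, at $k = 0$ this reads $\w^{(0)} = \ff{\sqrt5 - 1}{2} \ge \ff{1}{2}$, which indeed holds.

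I do not expect any genuinely hard step here. The only points needing care are: (i) verifying $\w^{(k)} \in (0,1)$ so that all divisions and square roots are legitimate; (ii) carrying out the squaring correctly to obtain $\lt\w^{(k)}\rt^2 = \lt\w^{(k-1)}\rt^2(1-\w^{(k)})$ — this algebraic maneuver, which removes the square root from the recursion, is what makes the whole argument work; and (iii) getting the index bookkeeping right in the telescoping sum, where the base term $\w^{(-1)} = 1$ supplies the additive constant that turns the bound into $1/(k+2)$ rather than $1/(k+1)$.
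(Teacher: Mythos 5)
Your proof is correct, but takes a genuinely different route from the paper's. The paper proceeds by direct induction: it observes that $h(x) = \tfrac{1}{2}x(\sqrt{x^2+4}-x)$ is increasing, applies the inductive hypothesis to get $\w^{(i)} \ge h(\tfrac{1}{i+1})$, and then closes the induction by separately verifying the algebraic inequality $\sqrt{4x^2+1} \ge \tfrac{2x^2}{x+1}+1$. You instead square out the recursion to obtain the identity $(\w^{(k)})^2 = (\w^{(k-1)})^2(1-\w^{(k)})$ — an identity the paper also uses, but only later, in the analysis of $A^{(k)}$ in the proof of Lemma~\ref{lem:nonstrong_converge}, not here — and then telescope a unit bound on the increments $\tfrac{1}{\w^{(k)}} - \tfrac{1}{\w^{(k-1)}} \le 1$. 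Your route is more transparent: it makes plain that the $\tfrac{1}{k+2}$ bound arises precisely because each step adds at most $1$ to $1/\w$, starting from $1/\w^{(-1)} = 1$, and it avoids both the monotonicity argument and the separate algebraic verification. The paper's approach is self-contained within the induction but requires pulling the helper inequality $\sqrt{4x^2+1} \ge \tfrac{2x^2}{x+1}+1$ seemingly out of thin air. All steps in your argument check out: the squaring is legitimate given $\w^{(k-1)} > 0$, the elementary bound $1-\sqrt{1-t} \le t$ for $t \in (0,1)$ is correct, and the index bookkeeping in the telescoping sum (with $k+1$ summands plus the base term $1/\w^{(-1)} = 1$) yields exactly $k+2$.
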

\begin{proof}
The case $k = 0$ is clearly true as $\w^{(0)} = \ff{\sqrt{5}-1}{2} > \ff{1}{2}$. Suppose that $\w^{(i-1)} \ge \f{1}{i+1}$ for some $i \ge 1$.
Observe that the function $h(x) = \ff{1}{2}(x(\sqrt{x^2+4}-x))$ is increasing for all $x$. Therefore, $\w^{(i)} = h(\w^{(i-1)}) \ge h(\ff{1}{i+1}) = \ff{1}{2(i+1)} \lt \sqrt{\ff{1}{(i+1)^2} + 4} - \ff{1}{i+1}\rt = \ff{1}{2(i+1)^2}\lt \sqrt{4(i+1)^2+1}-1\rt$.

Now, it just remains to show that $\sqrt{4x^2+1} \ge \f{2x^2}{x+1}+1$ for all $x \ge 0$.
To prove this, note that $4x^2(x+1)^2 = 4x^4 + 8x^3 + 4x^2$, so
\[4x^2 + 1 = \f{4x^4 + 8x^3 + 4x^2}{(x+1)^2} + 1 \ge \f{4x^4 + 4x^3 + 4x^2}{(x+1)^2} + 1 = \lt \f{2x^2}{x+1} + 1\rt^2~.\]

\noindent Thus,
\[\w^{(i)} \ge \f{1}{2(i+1)^2}\lt \sqrt{4(i+1)^2+1}-1\rt \ge \f{1}{2(i+1)^2} \cdot \f{2(i+1)^2}{(i+2)} = \f{1}{i+2}~.\]
\end{proof}

\begin{sublemma}
\label{lem:wint}
$\w^{(k)} \in (0,1)$ for all $k \ge 0$. Additionally, $\w^{(k)} < \w^{(k-1)}$ for all $k \ge 0$.
\end{sublemma}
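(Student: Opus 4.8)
The plan is to isolate the map $h(x) \defeq \tfrac{1}{2}\bigl(x\bigl(\sqrt{x^2+4}-x\bigr)\bigr)$, so that $\w^{(k)} = h(\w^{(k-1)})$ for all $k \ge 0$, with $\w^{(-1)} = 1$. Everything will follow from one elementary inequality: for every $x > 0$ one has $0 < h(x) < x$. The lower bound is immediate, since $\sqrt{x^2+4} > \sqrt{x^2} = x$ forces $h(x) > 0$. For the upper bound, $h(x) < x$ is equivalent to $\sqrt{x^2+4} - x < 2$, i.e. $\sqrt{x^2+4} < x+2$; as both sides are positive, I can square to obtain the equivalent statement $x^2 + 4 < x^2 + 4x + 4$, i.e. $0 < 4x$, which holds.

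For the first claim ($\w^{(k)} \in (0,1)$ for all $k \ge 0$) I would simply invoke the already-proved Sub-lemmas~\ref{lem:sk} and \ref{lem:sk2}, which together give $0 < \tfrac{1}{k+2} \le \w^{(k)} \le \tfrac{4}{k+6} \le \tfrac{2}{3} < 1$ for every $k \ge 0$. (If one prefers a self-contained route, this also follows by a one-line induction from $0 < h(x) < x$: $\w^{(0)} = h(1) \in (0,1)$, and if $\w^{(k-1)} \in (0,1)$ then $\w^{(k)} = h(\w^{(k-1)}) \in (0,\w^{(k-1)}) \subseteq (0,1)$.)

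For the second claim ($\w^{(k)} < \w^{(k-1)}$ for all $k \ge 0$), it suffices to note that $\w^{(k-1)} > 0$ for every $k \ge 0$: when $k = 0$ this is just $\w^{(-1)} = 1 > 0$, and when $k \ge 1$ it is part of the first claim (or Sub-lemma~\ref{lem:sk2}). Applying the key inequality with $x = \w^{(k-1)}$ then yields $\w^{(k)} = h(\w^{(k-1)}) < \w^{(k-1)}$, as desired.

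There is essentially no obstacle here; the only step needing a moment of care is the squaring used to establish $h(x) < x$, which is legitimate precisely because $\sqrt{x^2+4}$ and $x+2$ are both positive. I would present the argument via Sub-lemmas~\ref{lem:sk}--\ref{lem:sk2} for brevity, keeping the short induction from $0 < h(x) < x$ as a fallback in case one wants the proof independent of those bounds.
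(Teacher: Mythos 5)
Your proof is correct and follows essentially the same route as the paper: the paper also obtains positivity from Sub-lemma~\ref{lem:sk2} and monotonicity from the observation that $\tfrac{1}{2}(\sqrt{x^2+4}-x) < 1$ for all $x > 0$, which is exactly your inequality $h(x) < x$. You simply spell out the squaring step that the paper treats as immediate.
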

\begin{proof}
The fact that $\w^{(k)} > 0$ follows from Lemma \ref{lem:sk2}. To show the rest, we simply observe that $\ff{1}{2}(\sqrt{x^2+4}-x) < \ff{2}{2} = 1$ for all $x > 0$; as $\w^{(-1)} = 1$ and $\w^{(k)} = \ff{1}{2}(\sqrt{(\w\ind{k-1})^2+4}-\w\ind{k-1}) \cdot \w\ind{k-1}$ for all $k \ge 0$, the result follows.
\end{proof}

\begin{sublemma}
\label{lem:sk_real}
Define $s^{(k)} = 1+\su{i=0}{k-1} \f{1}{\w^{(i)}}$. Then, $\left( s^{(k)}\right)^{-1} \le \f{8}{(k+2)^2}$ for all $k \ge 0$.
\end{sublemma}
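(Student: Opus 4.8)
The plan is to lower-bound $s^{(k)}$ directly, using the upper bound on $\w^{(i)}$ already established in \Cref{lem:sk}, and then compare the resulting quadratic in $k$ against $(k+2)^2/8$. Since $\w^{(i)} > 0$ for every $i \ge 0$ by \Cref{lem:wint}, the bound $\w^{(i)} \le \tfrac{4}{i+6}$ from \Cref{lem:sk} is equivalent to $\tfrac{1}{\w^{(i)}} \ge \tfrac{i+6}{4}$, and this is the only input I would need.

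Summing this estimate over $i = 0, \dots, k-1$ and using the definition of $s^{(k)}$, I would obtain
\[
s^{(k)} = 1 + \sum_{i=0}^{k-1} \frac{1}{\w^{(i)}} \;\ge\; 1 + \frac{1}{4}\sum_{i=0}^{k-1}(i+6) \;=\; 1 + \frac{1}{4}\left(\frac{k(k-1)}{2} + 6k\right) \;=\; \frac{k^2 + 11k + 8}{8}.
\]
The remaining step is the elementary observation that $k^2 + 11k + 8 \ge k^2 + 4k + 4 = (k+2)^2$ for all $k \ge 0$, since the difference is $7k + 4 \ge 0$; hence $s^{(k)} \ge (k+2)^2/8 > 0$, and taking reciprocals gives $\left(s^{(k)}\right)^{-1} \le \tfrac{8}{(k+2)^2}$. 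The base case $k = 0$, where the sum is empty and $s^{(0)} = 1$, is handled directly by $(0+2)^2/8 = 1/2 \le 1$. (Alternatively, one could run the same estimate as a one-line induction on $k$, feeding $s^{(k)} = s^{(k-1)} + 1/\w^{(k-1)} \ge (k+1)^2/8 + (k+5)/4$.)

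There is essentially no serious obstacle here: the only thing to check is that \Cref{lem:sk} is strong enough, and the polynomial comparison above confirms it, with slack to spare. In fact one can even replace the constant $8$ by $4$: squaring the recurrence shows $(\w^{(k)})^2 = (1-\w^{(k)})(\w^{(k-1)})^2$, so with $b_k \defeq 1/\w^{(k)}$ one gets $b_k^2 - b_{k-1}^2 = b_k$, whence telescoping (using $b_{-1} = 1$) yields $s^{(k)} = (\w^{(k-1)})^{-2}$, and the increment bound $b_k - b_{k-1} = b_k/(b_k+b_{k-1}) \ge \tfrac12$ gives $b_{k-1} \ge (k+2)/2$. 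For the stated bound, however, the direct summation argument is the cleanest route.
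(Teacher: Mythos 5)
Your proof is correct and matches the paper's argument essentially line for line: both lower-bound $s^{(k)}$ by summing the estimate $1/\w^{(i)} \ge (i+6)/4$ from the preceding sublemma, arrive at $s^{(k)} \ge (k^2+11k+8)/8$, and conclude by the elementary comparison $k^2+11k+8 \ge (k+2)^2$. The closing aside about the exact identity $s^{(k)} = (\w^{(k-1)})^{-2}$ and the sharper constant $4$ is a nice observation but not needed for the stated bound.
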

\begin{proof}
Applying Lemma \ref{lem:sk}, $s^{(k)} \ge 1 + \su{i=0}{k-1} \lt \f{i+6}{4} \rt = \f{k(k+11)+8}{8} \ge \f{k(k+4)+4}{8} = \ff{1}{8}(k+2)^2$, and so $\left( s^{(k)} \right)^{-1} \le \f{8}{(k+2)^2}$.
\end{proof}

\begin{sublemma}
\label{lem:ak_ind}
$\f{1}{(\w\ind{k})^2} - \f{1}{\w\ind{k}} = \su{i=-1}{k-1} \f{1}{\w\ind{i}} = s\ind{k}$ for all $k \ge 0$.
\end{sublemma}
\begin{proof}
Notice that $(\w^{(k)})^2 = (1-\w^{(k)})(\w^{(k-1)})^2$ for all $k \ge 0$, by definition of the sequence $\{\w\ind{k}\}$.
Thus, since $w\ind{k} \in (0,1)$ for all $k \ge 0$, $\f{1}{(\w\ind{k})^2}-\f{1}{\w\ind{k}} = \f{1}{(\w\ind{k-1})^2}$.
This proves the base case $k = 0$, since $\w\ind{-1}=1$.
Now, for $k \ge 0$ define $B\ind{k} = \f{1}{(\w\ind{k})^2} - \f{1}{\w\ind{k}}$.
Then for all $k \ge 0$, $B\ind{k+1}- \lt B\ind{k} + \f{1}{\w\ind{k}} \rt =
\f{1}{(\w\ind{k+1})^2} - \f{1}{\w\ind{k+1}} - \f{1}{(\w\ind{k})^2} = 0$.
Thus $B\ind{k+1} = B\ind{k} + \f{1}{\w\ind{k}} = \f{1}{\w\ind{k}}+ \su{i=-1}{k-1} \f{1}{\w\ind{i}}$ by the inductive hypothesis.
\end{proof}

{\renewcommand\footnote[1]{}\nonstrong*}

\begin{proof}
For simplicity of exposition, we present the proof in the case where $L\ind{k} = L$ for all $k$ (i.e., $L$ is known).
The general case can be handled by tightening the analysis,
similarly to the analysis of standard AGD with adaptive step size on convex functions.

In the non-strongly quasar-convex case, $\2 = 0$ and $\B = 1$.
For all $k$, $\eta^{(k)} = \ff{\1}{L\ind{k}\w^{(k)}} \ge \ff{\1}{L\ind{k}}$ since $\w^{(k)} \in (0,1)$ by Lemma \ref{lem:wint}.
Additionally, $\A^{(k)}$ is in $[0,1]$ and $(\A,x,y_{\alpha},v) = (\A^{(k)},x\ind{k},y\ind{k},v\ind{k})$ satisfies \eqref{eq:ak_existence_2} with $b = \ff{1-\B}{2\eta^{(k)}} = 0$, $c = \ff{L\ind{k}\eta^{(k)}-\1}{\B} = L\ind{k}\eta^{(k)}-\1$ by construction.
Lemmas \ref{lem:agd_one_step} and \ref{lem:agd_linesearch} thus imply that for all $k \ge 0$,
\begin{equation}
\label{eq:nonstrong_onestep}
2 ( \eta^{(k)})^2 L\ind{k} \ep^{(k+1)} + r^{(k+1)} \le
r^{(k)} + 2\eta^{(k)} \lt L\ind{k}\eta^{(k)} - \1 \rt \ep^{(k)} + 2\eta^{(k)} \tilde{\ep}~.
\end{equation}

 Define $A^{(k)} \defeq 2\left( \eta^{(k)}\right)^2 L\ind{k} - 2\eta^{(k)} \1$. So, $(A^{(k)} + 2 \eta^{(k)} \1)\ep^{(k+1)} + r^{(k+1)} \le A^{(k)} \ep^{(k)} + r^{(k)} +  2\eta^{(k)}\tilde{\ep}$.
\newline
Recall that $(\w^{(k+1)})^2 = (1-\w^{(k+1)})(\w^{(k)})^2$ and $\w^{(k)} \in (0,1)$ for all $k \ge 0$. So,
\begin{align*}
A^{(k+1)} - (A^{(k)} + 2\eta^{(k)}\1) &&=  \\
2( \eta^{(k+1)})^2 L\ind{k+1} - 2\eta^{(k+1)} \1 - 2( \eta^{(k)})^2 L\ind{k} &&= \\
2\lt \f{\1^2 L\ind{k+1}}{(L\ind{k+1})^2 ( \w^{(k+1)})^2} - \f{\1^2}{L\ind{k+1} \w^{(k+1)}} - \f{\1^2 L\ind{k}}{(L\ind{k})^2 ( \w^{(k)})^2} \rt &&= \\
2\1^2 \lt \f{1}{L\ind{k+1}} \cdot \f{1-\w^{(k+1)}}{( \w^{(k+1)})^2} - \f{1}{L\ind{k}} \cdot \f{1}{( \w^{(k)})^2} \rt &&= \\
2\1^2 \lt \f{1}{L\ind{k+1}} \cdot \f{1}{(\w^{(k)})^2} - \f{1}{L\ind{k}} \cdot \f{1}{( \w^{(k)})^2} \rt &&\le 0~. \\
\end{align*}
The final inequality comes from the fact that $L\ind{k+1} \ge L\ind{k}$, by definition of the sequence $\{L\ind{k}\}$ in Algorithm~\ref{alg:nonstrong_agd}.
So, $A^{(k+1)} = \ff{L\ind{k}}{L\ind{k+1}} (A\ind{k} + 2 \eta^{(k)} \1) \le A^{(k)} + 2 \eta^{(k)} \1$
and thus $A\ind{k+1}\ep\ind{k+1} + r\ind{k+1} \le 
(A\ind{k}+2\eta\ind{k}\1)\ep\ind{k+1}+r\ind{k+1} \le A\ind{k}\ep\ind{k} + r\ind{k} + 2\eta\ind{k}\epp$.
Applying \eqref{eq:nonstrong_onestep} repeatedly, we thus have
\begin{equation}
\label{eq:nonstrong_induct}
A^{(k)} \ep^{(k)} + r^{(k)} \le
A^{(k-1)} \ep^{(k-1)} + r^{(k-1)} + 2\eta^{(k-1)} \tilde{\ep} \le \dots \le
A^{(0)} \ep^{(0)} + r^{(0)} + 2\tilde{\ep}\su{i=0}{k-1}\eta^{(i)}.
\end{equation}
By Lemma~\ref{lem:ak_ind}, $A^{(k)} = 2( \eta^{(k)})^2 L\ind{k} - 2\eta^{(k)} \1 =
\f{2\1^2}{L\ind{k}} \lt \f{1}{(\w\ind{k})^2} - \f{1}{\w\ind{k}}\rt =
\f{2\1^2}{L\ind{k}} s^{(k)}$, where $s^{(k)} \defeq \lt 1 + \su{i=0}{k-1} \f{1}{\w^{(i)}} \rt$. Since $0 < L\ind{k} < 2L$
for all $k \ge 0$, we thus have $A\ind{k} \ge \f{\1^2}{L} s^{(k)}$.

Also, $A^{(0)} = 2( \eta^{(0)})^2 L\ind{0} - 2 \eta^{(0)} \1 = 2\f{\1^2}{L\ind{0} ( \w^{(0)})^2} - 2\f{\1^2}{L\ind{0} \w^{(0)}} = \f{2\1^2}{L\ind{0}}$, as $\w^{(0)} = \f{\sqrt{5}-1}{2}$.
\newline
So, as $r^{(k)} \ge 0$ and (by our simplifying assumption) $L\ind{k} = L$,
\begin{align*}
\ep^{(k)} &\le (A^{(k)})^{-1} \lt A^{(0)} \ep^{(0)} + r^{(0)} \rt + 2(A^{(k)})^{-1} \tilde{\ep}\su{i=0}{k-1}\eta^{(i)} & \\
&\le \f{L}{\1^2} (s^{(k)})^{-1} \lt \f{2\gamma^2}{L\ind{0}} \ep^{(0)} + r^{(0)} \rt +
\f{2\epp L}{\1} (s\ind{k})^{-1} \lt \su{i=0}{k-1} \eta^{(i)}\rt
\end{align*}

Then, the previous expression becomes $(s^{(k)})^{-1} \lt 2 \ep^{(0)} + \f{L}{\gamma^2}r^{(0)}\rt + \gamma^{-1} \tilde{\ep}$.
$\tilde{\ep} = \f{\1 \ep}{2}$ by definition and $\left( s^{(k)}\right)^{-1} \le \f{8}{(k+2)^2}$ by Lemma \ref{lem:sk_real}, which proves the bound on $\ep^{(k)}$.

For the iteration bound, we simply require $K$ large enough such that $\ff{8}{(K+2)^2}\lt \ep^{(0)} + \ff{L}{2\gamma^2}r^{(0)}\rt \le \ff{\ep}{2}$. Observe that as $f(x^{(0)}) \le f(\qx) + \ff{L}{2}\norm{x^{(0)}-\qx}^2$ by Fact \ref{fact:upper_bd}, $2\ep^{(0)} \le L r^{(0)} \le \ff{L}{\1^2} r^{(0)}$.

So, it suffices to have $\ff{8}{(K+2)^2} \lt \ff{2L}{\1^2}r^{(0)}\rt \le \ff{\ep}{2}$. Rearranging, this is equivalent to $K+2 \ge 8\1^{-1}L^{1/2}R\ep^{-1/2}$, as $r^{(0)} = R^2$. As $K$ must be a nonnegative integer, it suffices to have $K \ge \floor{8\1^{-1}L^{1/2}R\ep^{-1/2}}$.
\end{proof}

\nonstrongruntime*
\begin{proof}
Lemma \ref{lem:nonstrong_converge} implies $O(\1^{-1}L^{1/2}R\ep^{-1/2})$ iterations are needed to get an $\ep$-optimal point.
Lemma \ref{lem:linesearch} implies that each line search uses
$O \lt \log^+ \lt (1+c) \min \left\{ \ff{L\norm{x\ind{k}-v\ind{k}}^2}{\tilde{\ep}}, \ff{L^3}{b^3} \right\} \rt \rt$ function and gradient evaluations.
Again, for simplicity we focus on the case where $L\ind{k} = L$ for all $k \ge 0$; the analysis for the general case proceeds analogously.
In this case,
$b = 0$, $c = L\eta\ind{k} - \1 = \1 \lt \ff{1}{\w^{(k)}} - 1 \rt$, and $\tilde{\ep} = \ff{\1 \ep}{2}$.
By Lemma \ref{lem:sk2} and \ref{lem:wint}, $1 < \ff{1}{\w^{(k)}} \le k+2$ for all $k \ge 0$.
Thus, the number of function and gradient evaluations required for the line search at iteration $k$ of Algorithm \ref{alg:nonstrong_agd} is
$O\lt \log^+ \lt (\1 k+1) \ff{L\norm{x^{(k)}-v^{(k)}}^2}{\1 \ep} \rt\rt$.

Now, we bound $\norm{x\ind{k} - v\ind{k}}^2$. To do so, we first bound $\norm{v\ind{k} - \qx}^2 = r\ind{k}$. Recall that equation \eqref{eq:nonstrong_induct} in the proof of Lemma \ref{lem:nonstrong_converge} says that
$A\ind{k}\ep\ind{k} + r\ind{k} \le A^{(0)} \ep^{(0)} + r^{(0)} + 2\tilde{\ep}\sum\limits_{i=0}^{k-1}\eta^{(i)}$, where $A\ind{j} \defeq \ff{2\1^2}{L} \lt 1 + \sum\limits_{i=0}^{j-1} \ff{1}{\w^{(i)}}\rt$. As $A\ind{k}, \ep\ind{k} \ge 0$, this means that
\begin{align*}
r\ind{k} \le A\ind{0}\ep\ind{0} + r\ind{0} + 2\epp\su{i=0}{k-1} \eta\ind{i} = \f{2\1^2}{L} \ep\ind{0} + r\ind{0} + \f{\1^2\ep}{L}\su{i=0}{k-1}\f{1}{\w\ind{i}}~,\end{align*}
using that $\eta\ind{i} = \ff{\1}{L\w\ind{i}}$, $\epp = \ff{\1\ep}{2}$, and $A\ind{0} = \ff{2\1^2}{L}$ (as previously shown in the proof of Lemma \ref{lem:nonstrong_converge}).
Now, by Lemma \ref{lem:sk2} we have that $\sum\limits_{i=0}^{k-1}\ff{1}{\w\ind{i}} \le \sum\limits_{i=0}^{k-1} (i+2) = \ff{k(k+3)}{2}$, and by $L$-smoothness of $f$ and Fact \ref{fact:upper_bd} we have that $\ep\ind{0} \le \ff{L}{2}r\ind{0} \le \ff{L}{2\1^2}r\ind{0}$. Thus, for all $k \ge 1$, we have
\begin{align*}
r\ind{k} \le 2r\ind{0} + \ff{\1^2\ep k(k+3)}{2L} \le 2 (R^2 + \ff{\1^2 \ep k^2}{L})~,
\end{align*}
as $r\ind{0} = R^2$ and $k+3 \le 4k$ for all $k \ge 1$. In fact, the above holds for $k = 0$ as well, because $r\ind{k}$ is simply $r\ind{0}$ in this case.

By the triangle inequality, $\norm{v\ind{k} - v\ind{k-1}} \le \norm{v\ind{k}-\qx} + \norm{v\ind{k-1}-\qx} \le 2\sqrt{2(R^2 + \ff{\1^2\ep k^2}{L})}$.
Since $\B = 1$, we have that $v\ind{k-1} - \eta\ind{k-1} \G f(y\ind{k-1})$ and so $\norm{v\ind{k} - v\ind{k-1}} = \eta\ind{k-1} \norm{\G f(y\ind{k-1})}$.
Thus, \begin{equation}
\label{eq:gradbound}
\norm{\G f(y\ind{k-1})} \le (\eta\ind{k-1})^{-1} \cdot 2\sqrt{2(R^2 + \ff{\1^2\ep k^2}{L})} = L\w\ind{k-1}\1^{-1}\sqrt{8(R^2 + \ff{\1^2\ep k^2}{L})}~.
\end{equation}

\noindent Now, by definition of $x\ind{k}$, $v\ind{k}$, and $y\ind{k-1}$,
\begin{align*}
x\ind{k}-v\ind{k} &= y\ind{k-1} - \ff{1}{L} \G f(y\ind{k-1}) - v\ind{k} \\
&= \A\ind{k-1} x\ind{k-1} + (1-\A\ind{k-1})v\ind{k-1} - \ff{1}{L} \G f(y\ind{k-1}) - v\ind{k} \\
&= \A\ind{k-1} x\ind{k-1} + (1-\A\ind{k-1})v\ind{k-1} - \ff{1}{L} \G f(y\ind{k-1}) - \lt v\ind{k-1} - \eta\ind{k-1}\G f(y\ind{k-1})\rt\\
&= \A\ind{k-1} (x\ind{k-1}-v\ind{k-1}) + (\eta\ind{k-1}-\ff{1}{L})\G f(y\ind{k-1})~.
\end{align*}
\noindent Therefore,
\begin{align*}
\norm{x\ind{k}-v\ind{k}} &\le \A\ind{k-1} \norm{x\ind{k-1}-v\ind{k-1}} + \left|\eta\ind{k-1}-\ff{1}{L}\right| \cdot \norm{\G f(y\ind{k-1})}
\\&\le \norm{x\ind{k-1}-v\ind{k-1}} + \left(\eta\ind{k-1}+\ff{1}{L}\right) \cdot \norm{\G f(y\ind{k-1})}
\\&\le \norm{x\ind{k-1}-v\ind{k-1}} + \ff{2}{L\w\ind{k-1}} \cdot \norm{\G f(y\ind{k-1})}
\\&\le \norm{x\ind{k-1}-v\ind{k-1}} + \1^{-1}\sqrt{32(R^2 + \ff{\1^2\ep k^2}{L})}
\\&\le \norm{x\ind{k-1}-v\ind{k-1}} + \sqrt{32}\1^{-1}\lt R + \1 k\sqrt{\ff{\ep}{L}}\rt~,
\end{align*}
where the first inequality is the triangle inequality, the third inequality uses that $\eta\ind{k-1} = \ff{\1}{L\w\ind{k-1}}$ and that $\1, \w\ind{k-1} \in (0,1]$, the fourth inequality uses \eqref{eq:gradbound}, and the final inequality uses that $\sqrt{a+b} \le \sqrt{a}+\sqrt{b}$ for any $a,b \ge 0$.

As this holds for all $k \ge 1$, we have by induction that for all $k \ge 0$,
\begin{align*}
\norm{x\ind{k}-v\ind{k}} \le \norm{x\ind{0}-v\ind{0}} + \su{j=1}{k} \sqrt{32}\1^{-1}\lt R + \1 j\sqrt{\ff{\ep}{L}}\rt = \sqrt{32}\1^{-1}\su{j=1}{k} \lt R + \1 j\sqrt{\ff{\ep}{L}}\rt~,
\end{align*}
since $x\ind{0}=v\ind{0}$. Simplification yields $\norm{x\ind{k}-v\ind{k}} \le \sqrt{32}k\1^{-1}R + \sqrt{8}k(k+1)\sqrt{\ff{\ep}{L}}$. For all $k \ge 1$, it is the case that $k+1 \le 2k$, so $\norm{x\ind{k}-v\ind{k}} \le \sqrt{32}\lt k\1^{-1}R + k^2\sqrt{\ff{\ep}{L}}\rt$; this inequality holds for $k = 0$ as well, as $\norm{x\ind{0}-v\ind{0}} = 0$ in this case.

\noindent Suppose $k \le \floor{4\1^{-1}L^{1/2}R\ep^{-1/2}}$. Then
\begin{align*}
\norm{x\ind{k}-v\ind{k}} &\le \sqrt{32}\lt 4\1^{-1}L^{1/2}R\ep^{-1/2} \cdot \1^{-1}R + 16\1^{-2}LR^2\ep^{-1}\cdot \sqrt{\ff{\ep}{L}}\rt
\\ &= 80\sqrt{2} \cdot \1^{-2}L^{1/2}R^2\ep^{-1/2}~.
\end{align*}

Recall that the line search at iteration $k$ requires $O\lt \log^+ \lt (\1 k+1) \ff{L\norm{x^{(k)}-v^{(k)}}^2}{\1 \ep} \rt\rt$
function and gradient evaluations.
$(\1 k+1) \ff{L\norm{x^{(k)}-v^{(k)}}^2}{\1 \ep} \le (4L^{1/2}R\ep^{-1/2} + 1) \cdot 12800 (\1^{-5}L^{2}R^4\ep^{-2})$.
Therefore, each line search indeed requires $O\lt \log^+ \lt  \1^{-1} L^{1/2} R \ep^{-1/2} \rt\rt$ function and gradient evaluations.

As the number of iterations $k$ is $O(\1^{-1}L^{1/2}R\ep^{-1/2})$, the total number of function and gradient evaluations required is thus
$O \lt \1^{-1} L^{1/2}R \ep^{-1/2} \log^+ \lt \1^{-1} L^{1/2} R \ep^{-1/2} \rt \rt$, as claimed.

As in the strongly convex case, the algorithm may continue to run if the specified number of $\text{iterations}$ $K$ is larger; however, this theorem combined with Lemma \ref{lem:nonstrong_converge} shows that
$x\ind{k}$ will be $\ep$-optimal if ${k = \floor{4\1^{-1}L^{1/2}R\ep^{-1/2}}}$, and this $x\ind{k}$ will be produced using \\
$O \lt \1^{-1} L^{1/2}R \ep^{-1/2} \log^+ \lt \1^{-1} L^{1/2} R \ep^{-1/2} \rt \rt$ function and gradient evaluations. (Future iterates $x\ind{k'}$ with $k' > \floor{4\1^{-1}L^{1/2}R\ep^{-1/2}}$ will also be $\ep$-optimal.)
\end{proof}

\begin{remark}
If $f$ is $L$-smooth and $\1$-quasar-convex with $\1 \in (0,1]$ and $\norm{x^{(0)}-x^*} \le R$, then
gradient descent with step size $\ff{1}{L}$ returns a point $x$ with $f(x) \le f(\qx) + \ep$ after
$O\lt \1^{-1}LR^2 \ep^{-1} \rt$ function and gradient evaluations.
\end{remark}
\begin{proof}
\,See Theorem 1 in \citep{guminov2017accelerated}.
\end{proof}

\subsection{Comparisons with Standard AGD}
We have described how our algorithms relate to standard (convex) AGD. We briefly comment on the difference between the analysis of our algorithms and that of standard AGD, and on the use of a line search ``initial guess'' inspired by the setting of $\A\ind{k}$ in standard AGD.
\subsubsection{Analysis}
\label{app:agd-analysis}
Concretely, a key step in the proof of convergence of algorithms that extend Algorithm \ref{alg:agd} (including our algorithms as well as standard AGD) is to bound $Q\ind{k}$; this bound is then combined with Lemma \ref{lem:agd_one_step} to get the final convergence bound. In standard AGD, we bound $Q\ind{k}$ by setting $\A\ind{k}$ to a specific predetermined value. For instance, in the non-strongly convex case (where $\beta= 1$), $\A\ind{k}$ is set such that $\f{\A\ind{k}}{1-\A\ind{k}} = L\ind{k}\eta\ind{k}-1$.\footnote{As $\eta\ind{k} = \ff{1}{L\ind{k}\w\ind{k}}$, this implies that $\A\ind{k} = 1-\w\ind{k}$.} We then have $Q\ind{k} = 2\eta\ind{k} \cdot \f{\A\ind{k}}{1-\A\ind{k}}\G f(y\ind{k})^\top{(x\ind{k}-y\ind{k})} = 2\eta\ind{k}{(L\ind{k}\eta\ind{k}-1)} \G f(y\ind{k})^\top (x\ind{k}-y\ind{k}) $, and then we use convexity to obtain that $Q\ind{k} \le \\2\eta\ind{k}{(L\ind{k}\eta\ind{k}-1) } {(f(x\ind{k})-f(y\ind{k}))} =  2\eta\ind{k}(L\ind{k}\eta\ind{k}-1)  (\ep\ind{k}-\ep_y\ind{k})$. By contrast, for our algorithms, we bound $Q\ind{k}$ using Lemma \ref{lem:agd_linesearch}.

\subsubsection{Line Search Initial Guess}
\label{sec:guess}
In special cases, specifying an ``initial guess'' for $\A$ in the binary line search (Algorithm \ref{alg:linesearch}) can speed up our algorithms, by allowing the line search to be circumvented a large portion of the time.
For instance, at each step $k$ we can use the $\A^{(k)}$ prescribed by the standard version of AGD as a guess: this value is $\ff{\sqrt{L\ind{k} / \mu}}{1+\sqrt{L\ind{k} / \mu}}$ in the strongly convex case (Algorithm \ref{alg:strongly_agd}), and $1 - \w^{(k)}$ in the non-strongly convex case (Algorithm \ref{alg:nonstrong_agd}). Thus, when $f$ is convex or strongly convex (and thus $\1 = 1$), our respective algorithms using the initial guess are equivalent to standard AGD (as described in \citep{Nesterov04}), since this initial guess always satisfies the necessary condition \eqref{eq:ak_existence_2} by convexity [in fact, it satisfies the stronger \eqref{eq:ak_existence}] and will thus be chosen as the value of $\A^{(k)}$.
Moreover, even when $f$ is nonconvex, checking this initial guess costs at most one extra function and gradient evaluation each per invocation of Algorithm \ref{alg:linesearch}.
So, when $\1 = 1$ we can interpret the overall algorithm as a ``robustified'' version of standard AGD---each iteration is identical to that of standard AGD unless a ``convexity violation'' between $x\ind{k}$ and $v\ind{k}$ is detected, in which case we fall back to the binary search.

\subsubsection{Analysis Techniques}
We remark that our analysis can also be recast in the framework of estimate sequences (for instance, following \citep{Nesterov04}), by generalizing the analysis for standard AGD.
The analysis presented in this work is an adaptation of a somewhat different style of analysis of standard AGD, based on analyzing the one-step decrease in the more general potential function presented in Lemma~\ref{lem:agd_one_step}.
Indeed, as mentioned, the standard AGD algorithms for both convex and strongly convex minimization
are also specific instances of the framework presented in Algorithm~\ref{alg:agd}.

\section{The Structure of Quasar-Convex Functions}
\label{sec:quasar-structure}
In this section, we prove various properties of quasar-convex functions. First, we state a slightly more general definition of quasar-convexity on a convex domain.

\begin{defn}
Let $\X \subseteq \R^n$ be convex. Furthermore, suppose that either $\X$ is open or $n = 1$.
Let $\1 \in (0,1]$ and let $\xStar \in \X$ be a minimizer of the differentiable function $f : \X \rightarrow \R$.
The function $f$ is \emph{$\1$-quasar-convex} on $\X$ with respect to $x^*$ if for all $x \in \X$,
\begin{equation*}
f(\xStar) \ge  f(x) + \frac{1}{\1} \grad f(x)^\top (\xStar-x).
\end{equation*}
Suppose also $\2 \ge 0$. The function $f$ is \emph{$(\1,\2)$-strongly quasar-convex} on $\X$ if for all $x \in \X$,
\begin{equation*}
f(\xStar) \ge f(x) + \frac{1}{\1} \grad f(x)^\top (\xStar-x) + \frac{\2}{2} \norm{ \xStar -x }^2.
\end{equation*}
If $\X$ is of the form $[a,b] \subseteq \R$, then $\G f(a)$ and $\G f(b)$ here denote $\lim\limits_{h \ra 0^+} \ff{f(a+h)-f(a)}{h}$ and $\lim\limits_{h \ra 0^-} \ff{f(b+h)-f(b)}{h}$, respectively. Differentiability simply means that $\G f(x)$ exists for all $x \in \X$.
\label{defn:gen-defns}
\end{defn}
Definition~\ref{defn:gen-defns} is exactly the same as Definition~\ref{defn:defns} if the domain $\X = \R^n$. We remark that it is possible to generalize Definition~\ref{defn:gen-defns} even further to the case where $\X$ is a \textit{star-convex} set with star center $\qx$.

\subsection{Proof of Observation~\ref{obs:unimodalImpliesQuasar}}\label{sec:unimodal-quasar}

\unimodalImpliesQuasar*

\begin{proof}
First, we prove that if $f$ is continuously differentiable and unimodal with nonzero derivative except at minimizers, then $f$ is $\1$-quasar-convex for some $\1 > 0$.

Let $\qx$ be a minimizer of $f$ on $[a,b]$, and let $x \in [a,b]$ be arbitrary. Define $g_x(t) = f((1-t)\qx+tx)$. By unimodality of $f$, $g_x$ is differentiable and increasing on $[0,1]$, so $g_x'(t) \ge 0$ for $t \in [0,1]$, and
\[
f(x) - f(\qx) = g_x(1) - g_x(0) = \I{0}{1} g_x'(t) \,dt ~.
\]
Also, $g_x'(1) = f'(x)(x-\qx) \ne 0$ by assumption for all $x$ with $f(x) > f(\qx)$. Note that if $f(x) = f(\qx)$, then $g_x(t)$ is constant on $[0, 1]$ by unimodality and so $g_x'(t) = 0$ for all $t \in [0,1]$.

Define $C_{\qx} = \sup\limits_{x \in [a,b]}\sup\limits_{t \in [0,1]} \f{g_x'(t)}{g_x'(1)}$, where we define the inner supremum to be 1 if $f(x) = f(x^*)$. By continuity of each $g_x'$ over $[0,1]$ and the fact that $g_x'(1) > 0$ for all $x \in [a,b]$ with $f(x) > f(x^*)$, $\sup_{t \in [0,1]} \ff{g_x'(t)}{g_x'(1)}$ is a continuous function of $x$. Thus as the outer supremum is over the compact interval $[a,b]$, \,$C_{\qx}$ indeed exists; note that $C_{\qx}  \in [1,\infty)$.

For any $x \in [a,b]$ with $f(x) > f(x^*)$, we thus have $\f{f(x)-f(\qx)}{f'(x)(x-\qx)} = \f{\int_{0}^{1}g_x'(t) \, dt}{g_x'(1)} \le C_{\qx}$, meaning $f(\qx) \ge f(x) + C_{\qx} (f'(x)(\qx-x))$. This also holds for all $x$ such that $f(x) = f(\qx)$, as either $x = \qx$ or $f'(x) = 0$ in these cases. Thus, $f$ is $\ff{1}{C_{\qx}}$ quasar-convex on $[a,b]$ with respect to $x^*$. Finally, if we define $C_{\max} = \max\limits_{\qx \in \argmin_{x \in [a,b]} f(x)} C_{\qx}$, we have that $f$ is $\ff{1}{C_{\max}}$ quasar-convex on $[a,b]$ where $\ff{1}{C_{\max}} \in (0,1]$ is a constant depending only on $f$, $a$, and $b$. This completes the proof.

Now, we prove the other direction (which is much simpler). Suppose that $f : [a,b] \ra \R$ is differentiable and quasar-convex for some $\1 \in (0, 1]$. Then $\ff{1}{\1}f'(x)(x-\qx) \ge f(x) - f(\qx) \ge 0$. If $x$ is not a minimizer of $f$, then the last inequality is strict; otherwise, either $x \in \{a,b\}$ or $f'(x) = 0$. In other words, assuming $x$ is not a minimizer, when $x < \qx$ [i.e. to the left of $\qx$], $f' < 0$ and so $f$ is strictly decreasing, while when $x > \qx$ [i.e. to the right of $\qx$], $f' > 0$ and so $f$ is strictly increasing. This implies that $f$ is unimodal.

Finally, suppose $h : \R^n \ra \R$ is $\1$-quasar-convex with respect to a minimizer $\qx$, suppose $d \in \R^n$ has $\norm{d} = 1$, and define $f(\theta) \defeq h(\qx + \theta d)$. Note that $f'(\theta) = d^\top \G h(\qx + \theta d)$ and that $\theta = 0$ minimizes $f$. By $\1$-quasar-convexity of $h$ with respect to $\qx$, we have for all $\theta \in \R$ that
\begin{align*}
f(0) = h(\qx) \ge h(\qx + \theta d) + \ff{1}{\1}\G h(\qx + \theta d)^\top (\qx - (\qx + \theta d)) = f(\theta) + \ff{1}{\1} f'(\theta)(0-\theta)~,
\end{align*}
meaning that $f$ is $\1$-quasar-convex.
\end{proof}

\subsection{Characterizations of Quasar-Convexity}
\label{sec:equivs}
\begin{lem}
\label{lem:star_char}
Let $f : \X \ra \R$ be differentiable with a minimizer $x^* \in \X$, where the domain $\X \subseteq \R^n$ is open and convex.\footnote{We remark that this lemma still holds if $\X$ is open and star-convex with star center $\qx$, or if $\X$ is any subinterval of $\R.$} Then, the following two statements:
\begin{equation}
\label{eq:star_def_undiff}
f(tx^*+(1-t)x) + t\left(1-\f{t}{2-\1}\right)\f{\1\2}{2}\norm{x^*-x}^2 \le \1 t f(x^*) + (1-\1 t)f(x)\,\, \forall x \in \X, \,t \in [0,1]
\end{equation}
\begin{equation}
\label{eq:star_diff}
f(x^*) \ge f(x) + \f{1}{\1}\G f(x)^\top(x^*-x) + \f{\2}{2} \norm{x^*-x}^2 \,\,\forall x \in \X
\end{equation}
are equivalent for all $\2 \ge 0$, $\1 \in (0,1]$.
\end{lem}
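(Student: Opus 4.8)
The plan is to prove the two implications separately. The direction \eqref{eq:star_def_undiff}$\,\Rightarrow\,$\eqref{eq:star_diff} is immediate: fixing $x \in \X$, subtract $f(x)$ from both sides of \eqref{eq:star_def_undiff}, divide by $t \in (0,1]$, and let $t \to 0^+$; the difference quotient $t^{-1}(f(x+t(x^*-x))-f(x))$ converges to $\nabla f(x)^\top(x^*-x)$ while the factor $1-\tfrac{t}{2-\gamma}$ converges to $1$, so dividing by $\gamma$ recovers \eqref{eq:star_diff}. (In the interval case this uses the one-sided-derivative convention of Definition~\ref{defn:gen-defns}.)

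For the reverse direction \eqref{eq:star_diff}$\,\Rightarrow\,$\eqref{eq:star_def_undiff}, the idea is to restrict $f$ to the segment $[x,x^*]$ and integrate. Fix $x \in \X$ and set $\phi(s) \defeq f(x + s(x^*-x))$, so that $\phi(0)=f(x)$, $\phi(1)=f(x^*)$, and $\phi'(s) = \nabla f(x+s(x^*-x))^\top(x^*-x)$; this is differentiable on a neighborhood of $[0,1]$ because $\X$ is open (again with one-sided derivatives at endpoints in the interval case). I would apply \eqref{eq:star_diff} at the point $x+s(x^*-x)$, which lies in $\X$ by convexity, and use $x^* - (x+s(x^*-x)) = (1-s)(x^*-x)$ to obtain, for $s \in [0,1)$, the differential inequality $(1-s)\phi'(s) + \gamma(\phi(s)-\phi(1)) \le -\tfrac{\gamma\mu}{2}\norm{x^*-x}^2(1-s)^2$. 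Multiplying by $(1-s)^{-\gamma-1}$ identifies the left-hand side as $\tfrac{d}{ds}\bigl[(1-s)^{-\gamma}(\phi(s)-\phi(1))\bigr]$, and integrating from $0$ to $t$ (writing $D \defeq \norm{x^*-x}^2$ and $a \defeq f(x)-f(x^*) \ge 0$) yields
\[
\phi(t) - f(x^*) \;\le\; (1-t)^\gamma a \;-\; \frac{\gamma\mu D}{2(2-\gamma)}\bigl((1-t)^\gamma - (1-t)^2\bigr),
\]
valid for $t \in [0,1]$ (the endpoint $t=1$ by continuity).

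The main obstacle is that this integrated bound, for $\gamma<1$ and $\mu>0$, is strictly weaker than the target \eqref{eq:star_def_undiff}, and closing that gap is the crux. A short algebraic computation shows that the right-hand side of \eqref{eq:star_def_undiff} minus the right-hand side of the displayed bound equals $\bigl((1-\gamma t)-(1-t)^\gamma\bigr)\bigl(a - \tfrac{\gamma\mu D}{2(2-\gamma)}\bigr)$. The first factor is nonnegative by Bernoulli's inequality $(1-t)^\gamma \le 1-\gamma t$ (valid for $\gamma\in(0,1]$, $t\in[0,1]$), so it remains to establish the ``distance to optimality'' estimate $a \ge \tfrac{\gamma\mu D}{2(2-\gamma)}$. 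I would obtain this by bootstrapping from the displayed bound itself: since $x^*$ minimizes $f$ we have $\phi(t)-f(x^*)\ge 0$, so the bound forces $(1-t)^\gamma a \ge \tfrac{\gamma\mu D}{2(2-\gamma)}\bigl((1-t)^\gamma-(1-t)^2\bigr)$, i.e.\ $a \ge \tfrac{\gamma\mu D}{2(2-\gamma)}\bigl(1-(1-t)^{2-\gamma}\bigr)$, for every $t\in[0,1)$; letting $t\to1^-$ gives $a \ge \tfrac{\gamma\mu D}{2(2-\gamma)}$. Plugging this back in makes the gap nonnegative, so $\phi(t)$ is bounded above by the right-hand side of \eqref{eq:star_def_undiff}, which is exactly the claim. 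The remaining details (the cases $t\in\{0,1\}$, justifying the integration of a differential inequality via ``differentiable with nonnegative derivative implies nondecreasing'', and the interval domain) are routine.
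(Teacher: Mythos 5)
Your proof is correct, and the ``hard'' direction \eqref{eq:star_diff}$\,\Rightarrow\,$\eqref{eq:star_def_undiff} is handled by a genuinely different argument from the paper's. The paper first establishes the $\mu=0$ case by contradiction: assuming $g(\alpha) > \ell(\alpha)$ for some $\alpha$ (with $g(t) = f(x_t) - f(x^*)$ and $\ell$ the linear target), it locates the next crossing point $\beta$ and derives a contradiction by comparing two integrals of $g$. It then handles $\mu>0$ by a reduction: it shows the auxiliary function $h(x) = f(x) - \frac{\gamma\mu}{2(2-\gamma)}\norm{x^*-x}^2$ satisfies the $\mu=0$ hypothesis and unwinds the substitution. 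You instead treat both cases uniformly by recognizing the integrating factor $(1-s)^{-\gamma-1}$, integrating the differential inequality in closed form, and observing that the gap to the target factors as $\bigl((1-\gamma t)-(1-t)^\gamma\bigr)\bigl(a - \tfrac{\gamma\mu D}{2(2-\gamma)}\bigr)$; Bernoulli's inequality makes the first factor nonnegative and a bootstrap from the same integrated bound (sending $t\to 1^-$) makes the second nonnegative. I verified the algebra: the factorization is exact, and the bootstrap correctly yields $a \ge \tfrac{\gamma\mu D}{2(2-\gamma)}$ from $(1-t)^\gamma a \ge \tfrac{\gamma\mu D}{2(2-\gamma)}\bigl((1-t)^\gamma-(1-t)^2\bigr)$. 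Your approach buys a more explicit and unified derivation — it produces the (sharper) intermediate estimate $\phi(t)-f(x^*) \le (1-t)^\gamma a - \tfrac{\gamma\mu D}{2(2-\gamma)}((1-t)^\gamma - (1-t)^2)$ as a by-product, and it also delivers Corollary~\ref{rem:distbound} directly along the way rather than as a downstream consequence. The paper's contradiction argument avoids any closed-form integral but requires the separate $\mu>0$ reduction. The easy direction \eqref{eq:star_def_undiff}$\,\Rightarrow\,$\eqref{eq:star_diff} is the same in both: divide by $t$ and send $t\to 0^+$.
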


\begin{proof}
First, we prove that \eqref{eq:star_diff} implies \eqref{eq:star_def_undiff}.

Suppose \eqref{eq:star_diff} holds and $\2 = 0$. Let $x \in \X$ be arbitrary and for all $t\in [0,1]$ let $x_t \defeq (1-t)x^* + t x$ and let $g(t) \defeq f(x_t) - f(x^*)$. Since $g'(t) = \grad f(x_t)^\top (x - x^*)$ and $x^* - x_t = - t (x^* - x)$, substituting these equalities into  $\eqref{eq:star_diff}$ yields that 
$g(t) \le \frac{t}{\1} g'(t)$ for all $t \in [0,1]$. 

Rearranging, we see that the inequality in \eqref{eq:star_def_undiff} [for fixed $x$] is equivalent to the condition
that $g(t) \le \ell(t)$ for all $t \in [0,1]$, where ${\ell(t) \defeq (1 - \1 (1 - t)) g(1)}$. We proceed by contradiction: suppose that for some $\alpha \in [0, 1]$ it is the case that $g(\alpha) > \ell(\alpha)$. Note that $\A > 0$ necessarily. Let $\beta$ be the minimum element of the set $\{ t \in [\alpha, 1] : g(t) = \ell(t) \}$. Since $g(1) = \ell(1)$, such a $\beta$ exists with $\alpha < \beta$. Consequently, for all $t \in (\alpha, \beta)$ we have $g(t) \geq \ell(t)$ and so
\begin{equation}
\label{eq:g1}
\int_{\alpha}^{\beta} g'(t) \,dt
= g(\beta) - g(\alpha)
< \ell(\beta) - \ell(\alpha)
= \1 (\beta - \alpha) g(1)
\end{equation}
and
\begin{equation}
\label{eq:g2}
(\beta - \alpha) g(1)
= \int_{\alpha}^{\beta}  \frac{\ell(t)}{1 - \1 (1 - t)} \,dt
\leq \int_{\alpha}^{\beta}  \frac{g(t)}{1 - \1 (1 - t)} \,dt ~.
\end{equation}
Combining \eqref{eq:g1} and \eqref{eq:g2} and using that $g(t) \le \ff{t}{\1}g'(t)$, we have

\[
\int_{\alpha}^{\beta}
\left[
\frac{1}{t}
- \frac{1}{1 - \1 (1 - t)}
\right]
g(t) \,dt
\le
\int_{\alpha}^{\beta} \f{g'(t)}{\1} \,dt - \int_{\alpha}^{\beta}  \frac{g(t)}{1 - \1 (1 - t)} \,dt 
< 0
\]
As $g(t) = f(x_t)-f(x^*) \ge 0$ and $1/t \ge 1/(1-\1 (1 - t))$ for all $t \in [\A,\B] \subset (0,1]$, we have a contradiction.

Now, suppose $\2 > 0$. Define $h(x) \triangleq f(x) - \f{\1\2}{2(2-\1)} \norm{x^*-x}^2$. Observe that $h(x^*) = f(x^*)$, $\G h(x) = \G f(x) - \f{\1\2}{2-\1} (x-x^*)$, and $\G h(x)^\top(x^*-x) = \G f(x)^\top(x^*-x) + \f{\1\2}{2-\1} \norm{x^*-x}^2$. Thus, by algebraic simplification and then application of \eqref{eq:star_diff} by assumption,
\begin{flalign*}
h(x) + \f{1}{\1} \G h(x)^\top(x^*-x) &= f(x) - \f{\1\2}{2(2-\1)} \norm{x^*-x}^2 + \f{1}{\1}\G f(x)^\top(x^*-x) + \f{\2}{2-\1} \norm{x^*-x}^2 && \\
&= f(x) + \f{1}{\1}\G f(x)^\top(x^*-x) + \f{\2}{2}\norm{x^*-x}^2 \left(- \f{\1}{2-\1} +  \f{2}{2-\1} \right)&& \\
&=  f(x) + \f{1}{\1}\G f(x)^\top(x^*-x) + \f{\2}{2}\norm{x^*-x}^2 &&\\
&\le f(x^*) = h(x^*)~.
\end{flalign*}

\noindent As we earlier showed that \eqref{eq:star_diff} implies \eqref{eq:star_def_undiff} in the $\mu = 0$ case, we have that
\[h(tx^* + (1-t)x) \le \1 t h(x^*) + (1-\1 t)h(x)~.\]

\noindent Substituting in the definition of $h$:
\begin{flalign*}
&f(tx^* + (1-t)x) - \f{\1\2}{2(2-\1)} \norm{x^*-tx^*-(1-t)x}^2 \\ 
\le\,\,& \1 t f(x^*) + (1-\1 t)f(x) - (1-\1 t)\f{\1\2}{2(2-\1)}\norm{x^*-x}^2~.
\end{flalign*}

\noindent Rearranging terms and simplifying yields
\begin{flalign*}
&f(tx^* + (1-t)x) + \f{\1\2}{2(2-\1)} \left( (1-\1 t) \norm{x^*-x}^2 - (1-t)^2\norm{x^*-x}^2\right)
\\
\le\,\,& \1 t f(x^*) + (1-\1 t)f(x)~.
\end{flalign*}
Finally, $(1-\1t) - (1-t)^2 = t((2-\1)-t)$, which gives the desired result.

\noindent Now, we prove that \eqref{eq:star_def_undiff} implies \eqref{eq:star_diff}.

This time, define $g(t) \triangleq f(tx^* + (1-t)x)$. For $t \in [0,1)$, $g'(t) = \G f(tx^* + (1-t)x)^\top(x^*-x)$. By assumption, $g(t) + t\left(1-\f{t}{2-\1}\right)\f{\1\2}{2}\norm{x^*-x}^2 \le \1 tg(1) + (1-\1 t)g(0)$ for all $t \in [0,1]$, so $g(1) \ge g(0) + \f{g(t) - g(0)}{\1 t} + \left(1-\f{t}{2-\1}\right)\f{\2}{2}\norm{x^*-x}^2$ for all $t \in (0,1]$. Taking the limit as $t \downarrow 0$ yields $f(x^*) = g(1) \ge g(0) + \f{1}{\1}g'(0) + \f{\2}{2}\norm{x^*-x}^2 = f(x) + \f{1}{\1}\G f(x)^\top(x^*-x) + \f{\2}{2}\norm{x^*-x}^2$.
\end{proof}

\begin{rem}
\label{rem:star_char_gen}
A modified version of Lemma \ref{lem:star_char} holds if $x^*$ is replaced with any point $\hat{x} \in \X$,
where either $\1 = 1$ or \eqref{eq:star_def_undiff} and \eqref{eq:star_diff} hold for all $x \in \X$ with $f(x) \ge f(\hat{x})$.
If $f$ satisfies either of these equivalent properties, we then say that $f$ is ``$(\1,\2)$-strongly quasar-convex with respect to $\hat{x}$.''
\end{rem}

\begin{rem}\label{eq:quasar-convex-more-general}
Using Remark \ref{rem:star_char_gen}, we can show that even if $\hat{x}$ is not a minimizer of the function $f$, Algorithms~\ref{alg:strongly_agd} and \ref{alg:nonstrong_agd} can still be applied to efficiently finding a point that has an objective value of at most $f(\hat{x}) + \ep$;
the respective runtime bounds are the same, and the proofs remain essentially unchanged.
\end{rem}

\noindent Note that when $\1 = 1, \2 = 0$, and \eqref{eq:star_def_undiff} is required to hold for \textit{all} minimizers of $f$, it becomes the standard definition of star-convexity \citep{nesterov2006cubic}.

\begin{coro}
\label{rem:distbound}
If $f$ is $(\1,\2)$-strongly quasar-convex with minimizer $\qx$, then
$$f(x) \ge f(x^*) + \f{\1\2}{2(2-\1)}\norm{x^*-x}^2,~ \forall x$$
\end{coro}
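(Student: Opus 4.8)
The plan is to derive this corollary directly from the characterization in Lemma~\ref{lem:star_char} (in the form extended by Remark~\ref{rem:star_char_gen}, although here $\qx$ is the minimizer so the basic Lemma suffices). The key observation is that inequality \eqref{eq:star_def_undiff} holds for $f$ with its minimizer $\qx$, and the left-hand side contains exactly the quadratic term we want, so it remains only to choose the parameter $t$ to extract the cleanest bound.

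First I would invoke the equivalence in Lemma~\ref{lem:star_char}: since $f$ is $(\1,\2)$-strongly quasar-convex with minimizer $\qx$, the differential inequality \eqref{eq:star_diff} holds for all $x$, hence the line-segment inequality \eqref{eq:star_def_undiff} holds as well, i.e.
\[
f(tx^*+(1-t)x) + t\left(1-\tfrac{t}{2-\1}\right)\tfrac{\1\2}{2}\norm{x^*-x}^2 \le \1 t f(x^*) + (1-\1 t)f(x)
\]
for all $x$ and all $t \in [0,1]$. Next I would use that $\qx$ is a global minimizer, so $f(tx^*+(1-t)x) \ge f(\qx) = f(x^*)$. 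Substituting this lower bound on the first term and rearranging gives
\[
\left[\1 t f(x^*) + (1-\1 t)f(x)\right] - f(x^*) \ge t\left(1-\tfrac{t}{2-\1}\right)\tfrac{\1\2}{2}\norm{x^*-x}^2,
\]
i.e. $(1-\1 t)\left(f(x) - f(x^*)\right) \ge t\left(1-\tfrac{t}{2-\1}\right)\tfrac{\1\2}{2}\norm{x^*-x}^2$. Dividing through by $(1-\1 t)$ (valid for $t \in [0,1)$, and also for $t=1$ if $\1 < 1$) yields
\[
f(x) - f(x^*) \ge \frac{t\left(1-\tfrac{t}{2-\1}\right)}{1-\1 t}\cdot\frac{\1\2}{2}\norm{x^*-x}^2.
\]

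It then remains to pick $t$ to make the scalar coefficient equal to $\tfrac{1}{2-\1}$. The natural choice is $t=1$: the coefficient becomes $\tfrac{1-1/(2-\1)}{1-\1} = \tfrac{(1-\1)/(2-\1)}{1-\1} = \tfrac{1}{2-\1}$, giving exactly the claimed bound $f(x) \ge f(x^*) + \tfrac{\1\2}{2(2-\1)}\norm{x^*-x}^2$. The only minor subtlety is the degenerate case $\1 = 1$, where the denominator $1-\1 t$ vanishes at $t=1$; there one should instead take the limit $t \uparrow 1$ (or observe that $\tfrac{t(1-t)}{1-t} = t \to 1$), which gives the same conclusion since at $\1=1$ the claimed bound is just $f(x) \ge f(x^*) + \tfrac{\2}{2}\norm{x^*-x}^2$, which also follows immediately from \eqref{eq:star_diff} with $\grad f(x^*) = 0$. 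I do not anticipate any real obstacle here; the main thing to get right is the algebraic bookkeeping in the choice of $t$ and handling the $\1 = 1$ edge case cleanly.
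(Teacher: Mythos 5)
Your proof is essentially the same as the paper's, which simply plugs $t=1$ into \eqref{eq:star_def_undiff} and rearranges; the intermediate step of bounding $f(tx^*+(1-t)x) \ge f(x^*)$ for general $t$ is unnecessary (at $t=1$ it is an equality) but does no harm. In fact, your treatment of the $\1=1$ edge case via $t \uparrow 1$ is \emph{more} careful than the paper's, which divides by $1-\1$ without comment. One small error in a side remark: the $\1=1$ case does \emph{not} "follow immediately from \eqref{eq:star_diff} with $\grad f(x^*)=0$" --- inequality \eqref{eq:star_diff} involves $\grad f(x)$ at the evaluation point $x$, not at $x^*$, so knowing the gradient vanishes at the minimizer gives you nothing directly. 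Your limit argument is the correct way to handle that case, so just drop the parenthetical.
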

\begin{proof}
Plug in $t = 1$ to \eqref{eq:star_def_undiff} to get
\[f(x^*) + \left(1-\f{1}{2-\1}\right)\f{\1\2}{2}\norm{x^*-x}^2 \le \1 f(x^*) + (1-\1)f(x)~.\]
Simplifying yields
\[f(x) \ge f(x^*) + \left(1-\f{1}{2-\1}\right)\f{\1\2}{2(1-\1)}\norm{x^*-x}^2 = f(x^*) + \f{\1\2}{2(2-\1)}\norm{x^*-x}^2~. \tag*{\qedhere}\]
\end{proof}

\begin{fact}
\label{fact:upper_bd}
If $f : \X \ra \R$ is $L$-smooth, $x^*$ is a minimizer of $f$, and the domain $\X \subseteq \R^n$ is open and star-convex with star center $x^*$, then $f(y) \le f(x^*) + \ff{L}{2}\norm{y-x^*}^2$ for all $y \in \X$.
\end{fact}
\begin{proof}
This is a simple and well-known fact that is true of any $L$-smooth function (whether or not it is quasar-convex); for completeness, we provide the proof.

Define $g(t) \defeq f((1-t)x^* + ty)$, for $t \in [0, 1]$. So, $g'(t) = {\G f((1-t)x^* + ty)^\top} (y-x^*)$, $g(0) = f(x^*)$, and $g(1) = f(y)$. Since $g'(0) = 0$ and $f$ is $L$-smooth, $\norm{\G f((1-t)x^* + ty)} \le L \norm{(1-t)x^* + ty - x^*} = L t \norm{y - x^*}$. So, $g'(t) \le |g'(t)| \le Lt \norm{y-x^*}^2$, and thus $f(y) = g(1) = \I{0}{1} g'(t) \, dt + g(0) \le \I{0}{1} Lt \norm{y-x^*}^2\,dt + g(0) = \ff{L}{2} \norm{y-x^*}^2 + f(x^*)$.
\end{proof}

\begin{observation}
\label{obs:l_vs_mu}
If $f$ is $(\1,\2)$-strongly quasar-convex, then $f$ is not $L$-smooth for any $L < \ff{\1\2}{2-\1}$.
\end{observation}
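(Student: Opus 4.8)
The plan is to sandwich $f(x)-f(x^*)$ between the quadratic lower bound furnished by strong quasar-convexity and the quadratic upper bound furnished by $L$-smoothness, and to observe that the two are incompatible unless $L \ge \tfrac{\gamma\mu}{2-\gamma}$. The degenerate case $\mu = 0$ is vacuous, since then $\tfrac{\gamma\mu}{2-\gamma} = 0$ and there is no $L < 0$ for which $f$ could fail to be $L$-smooth; so I would assume $\mu > 0$ (in which case $x^*$ is the unique minimizer of $f$, although uniqueness will not actually be needed).

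First I would invoke \Cref{rem:distbound}, which gives $f(x) \ge f(x^*) + \tfrac{\gamma\mu}{2(2-\gamma)}\norm{x^*-x}^2$ for all $x \in \R^n$. Then, supposing toward a contradiction that $f$ were $L$-smooth for some $L < \tfrac{\gamma\mu}{2-\gamma}$, the second half of \Cref{fact:smooth_decr} (whose hypothesis that $x^*$ is a minimizer of $f$ holds by the definition of strong quasar-convexity) would give $f(x) \le f(x^*) + \tfrac{L}{2}\norm{x-x^*}^2$ for all $x$. Chaining these two inequalities yields $\tfrac{\gamma\mu}{2(2-\gamma)}\norm{x^*-x}^2 \le \tfrac{L}{2}\norm{x^*-x}^2$ for every $x$.

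Finally I would pick any $x \ne x^*$, so that $\norm{x^*-x}^2 > 0$, and cancel it to conclude $\tfrac{\gamma\mu}{2-\gamma} \le L$, contradicting the assumption $L < \tfrac{\gamma\mu}{2-\gamma}$. There is no genuine obstacle here; the only points that require a moment's care are separating out the trivial $\mu = 0$ case and verifying that the minimizer hypothesis of \Cref{fact:smooth_decr} is met, which is immediate from the definition of strong quasar-convexity.
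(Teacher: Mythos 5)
Your proof is correct and takes essentially the same route as the paper's: both combine the lower bound from \Cref{rem:distbound} with the upper bound from \Cref{fact:smooth_decr} and cancel $\norm{x^*-x}^2$ to conclude $L \ge \tfrac{\gamma\mu}{2-\gamma}$. The extra remarks about the vacuous $\mu = 0$ case and the non-necessity of uniqueness are fine but not needed.
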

\begin{proof}
If $f$ is $(\1,\2)$-strongly quasar-convex, Corollary \ref{rem:distbound} says that $f(x) \ge f(x^*) + \ff{\1\2}{2(2-\1)}\norm{x^*-x}^2$ for all $x$. If $f$ is $L$-smooth, Fact \ref{fact:upper_bd} says that $f(x) \le f(x^*) + \ff{L}{2}\norm{x^*-x}^2$ for all $x$.

Thus, if $f$ is $(\1,\2)$-strongly quasar-convex and $L$-smooth, we have $\ff{\1\2}{2(2-\1)}\norm{x^*-x}^2 \le \ff{L}{2}\norm{x^*-x}^2$ for all $x$, which means that we must have $L \ge \ff{\1\2}{2-\1}$.
\end{proof}

\begin{observation}
\label{obs:minimizers}
If $f$ is $\1$-quasar convex, the set of its minimizers is star-convex.
\end{observation}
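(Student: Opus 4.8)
The plan is to show that the set of minimizers $M \defeq \{x \in \R^n : f(x) = f(\qx)\}$ is star-convex with the quasar-convex point $\qx$ as a valid star center; that is, for every minimizer $y \in M$ and every $t \in [0,1]$, the point $t\qx + (1-t)y$ again lies in $M$. This clearly implies the observation, since $\qx \in M$ and $M$ is nonempty.

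The main step is to invoke the ``line segment'' characterization of quasar-convexity from \Cref{lem:star_char}. Since $f$ is $\1$-quasar-convex with respect to $\qx$ and $\R^n$ is open and convex, inequality \eqref{eq:star_diff} holds with $\2 = 0$, and therefore so does \eqref{eq:star_def_undiff} with $\2 = 0$; the quadratic term vanishes, leaving
\[
f(t\qx + (1-t)x) \le \1 t\, f(\qx) + (1-\1 t) f(x) \qquad \forall x \in \R^n,\ t \in [0,1].
\]
Now fix any $y \in M$, so that $f(y) = f(\qx)$, and apply the displayed inequality with $x = y$: for every $t \in [0,1]$,
\[
f(t\qx + (1-t)y) \le \1 t\, f(\qx) + (1-\1 t) f(y) = f(\qx).
\]
Because $f(\qx)$ is the minimum value of $f$, this inequality must in fact be an equality, so $t\qx + (1-t)y \in M$. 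As $y \in M$ and $t \in [0,1]$ were arbitrary, $M$ is star-convex with star center $\qx$, which completes the argument.

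I do not expect a genuine obstacle here: the substantive work — converting the gradient inequality \eqref{eq:qc} into the chord-type inequality \eqref{eq:star_def_undiff} — is already done in \Cref{lem:star_char}, and the rest is the one-line observation that a value bounded above by $f(\qx)$ along the whole segment cannot undershoot the minimum. The only points to check carefully are that \Cref{lem:star_char} is being applied in the $\2 = 0$ case and that $\R^n$ satisfies its openness/convexity hypothesis. (As an alternative, one could instead restrict $f$ to the line through $\qx$ and $y$: by the second part of \Cref{obs:unimodalImpliesQuasar} this restriction is $\1$-quasar-convex, hence unimodal, and since it attains the same value at $\qx$ and at $y$ it must be constant on the segment between them; this yields the same conclusion.)
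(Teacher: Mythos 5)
Your proof is correct and matches the paper's argument essentially verbatim: both invoke the chord inequality \eqref{eq:star_def_undiff} from \Cref{lem:star_char} with $\2 = 0$ at a minimizer $y$ to conclude $f(t\qx + (1-t)y) \le f(\qx)$, hence equality by minimality. The alternative one-dimensional unimodality argument you sketch in parentheses is a nice sanity check but not needed.
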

\begin{proof}
Recall that a set $S$ is termed \textit{star-convex} (with star center $x_0$) if there exists an $x_0 \in S$ such that for all $x \in S$ and $t \in [0,1]$, it is the case that $tx_0 + (1-t)x \in S$ \citep{munkres}.

Suppose $f : \X \ra \R$ is $\1$-quasar-convex with respect to a minimizer $\qx \in \X$, where $\X$ is convex. Suppose $y \in \X$ also minimizes $f$. Then for any $t \in [0,1]$, equation \eqref{eq:star_def_undiff} implies that $f(t\qx + (1-t)y) \le \1t f(\qx) + (1-\1t)f(y) = \1 t f(\qx) + (1-\1t)f(\qx) = f(\qx)$. So, $t\qx + (1-t)y$ is in $\X$ and also minimizes $f$. Thus, the set of minimizers of $f$ is star-convex, with star center $\qx$.
\end{proof}

\begin{observation}
\label{obs:unique}
If $f$ is $(\1,\2)$-strongly quasar-convex with $\2 > 0$, $f$ has a unique minimizer.
\end{observation}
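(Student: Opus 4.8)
The plan is to deduce this immediately from Corollary~\ref{rem:distbound}. By hypothesis, $f$ is $(\1,\2)$-strongly quasar-convex with respect to \emph{some} minimizer $\qx$, and $\2 > 0$. First I would invoke Corollary~\ref{rem:distbound}, which gives the strict-growth-type bound
\[
f(x) \ge f(\qx) + \frac{\1\2}{2(2-\1)}\norm{\qx - x}^2 \qquad \text{for all } x.
\]
Since $\1 \in (0,1]$ we have $2 - \1 \in [1,2)$, so the coefficient $\frac{\1\2}{2(2-\1)}$ is strictly positive (here we use $\2 > 0$).

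Next I would argue by contradiction (or directly): suppose $y$ is any minimizer of $f$ with $y \ne \qx$. Then $\norm{\qx - y}^2 > 0$, so the displayed inequality applied at $x = y$ yields $f(y) > f(\qx)$, contradicting the assumption that $y$ minimizes $f$ (a minimizer has $f(y) = f(\qx) \le f(x)$ for all $x$, in particular $f(y) = f(\qx)$). Hence no minimizer other than $\qx$ exists, and $f$ has a unique minimizer, namely $\qx$.

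There is essentially no obstacle here; the entire content was front-loaded into Corollary~\ref{rem:distbound} (which in turn rests on Lemma~\ref{lem:star_char}). The only thing to be careful about is the quantifier in the definition: strong quasar-convexity is defined \emph{with respect to} a specific minimizer, so one should note at the outset that such a minimizer $\qx$ is assumed to exist, and the conclusion is that it is the only one.
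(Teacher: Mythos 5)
Your proof is correct and follows exactly the same route as the paper: invoke Corollary~\ref{rem:distbound} to get $f(x) > f(\qx)$ whenever $x \ne \qx$ and $\2 > 0$, hence $\qx$ is the unique minimizer. The extra detail you give (checking the coefficient is positive, spelling out the contradiction) is just a more verbose rendering of the paper's one-line argument.
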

\begin{proof}
\,By Corollary \ref{rem:distbound}, $f(x) > f(x^*)$ if $\mu > 0$ and $x \ne x^*$, implying that $x$ minimizes $f$ iff $x = x^*$.
\end{proof}

\begin{observation}
\label{obs:tradeoff}
Suppose $f$ is differentiable and $(\1,\2)$-strongly quasar-convex. Then $f$ is also $(\theta \1,\2/\theta)$-strongly quasar-convex for any $\theta \in (0,1]$.
\end{observation}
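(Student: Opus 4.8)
The plan is to verify the defining inequality \eqref{eq:sqc} directly, with $(\theta\gamma,\mu/\theta)$ substituted for $(\gamma,\mu)$ and the same minimizer $x^*$. First I would observe that the new parameters are admissible: $\theta\gamma\in(0,1]$ since $\gamma,\theta\in(0,1]$, and $\mu/\theta\ge 0$ since $\mu\ge 0$ and $\theta>0$.

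The main idea is that shrinking $\gamma$ to $\theta\gamma$ enlarges the coefficient multiplying the (possibly negative) linear term $\nabla f(x)^\top(x^*-x)$, but this enlargement is exactly paid for by the fact that $f(x^*)-f(x)\le 0$, as $x^*$ is a global minimizer. Concretely, I would rewrite the hypothesis \eqref{eq:sqc} at an arbitrary $x$ as
\[
\nabla f(x)^\top(x^*-x)\ \le\ \gamma\big[f(x^*)-f(x)\big]-\tfrac{\gamma\mu}{2}\|x^*-x\|^2,
\]
then, using $0<\theta\gamma\le\gamma$ together with $f(x^*)-f(x)\le 0$, note $\gamma\big[f(x^*)-f(x)\big]\le\theta\gamma\big[f(x^*)-f(x)\big]$, so that
\[
\nabla f(x)^\top(x^*-x)\ \le\ \theta\gamma\big[f(x^*)-f(x)\big]-\tfrac{\gamma\mu}{2}\|x^*-x\|^2.
\]
Dividing by $\theta\gamma>0$ and rearranging yields $f(x^*)\ge f(x)+\tfrac{1}{\theta\gamma}\nabla f(x)^\top(x^*-x)+\tfrac{\mu}{2\theta}\|x^*-x\|^2$, which is exactly \eqref{eq:sqc} for the pair $(\theta\gamma,\mu/\theta)$, since $\tfrac{\gamma\mu}{2\theta\gamma}=\tfrac{\mu}{2\theta}$.

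Since every step is an elementary manipulation of scalar inequalities, there is essentially no obstacle; the only point that needs a moment's care is getting the inequality direction correct when multiplying the nonpositive quantity $f(x^*)-f(x)$ by the two positive constants $\gamma$ and $\theta\gamma$.
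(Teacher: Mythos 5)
Your proof is correct and uses essentially the same idea as the paper's: both arguments hinge on the minimizer property $f(x^*)-f(x)\le 0$ to show the strong quasar-convexity inequality only becomes weaker when $\gamma$ is decreased, with the $\mu$-term rescaled accordingly. The paper multiplies the chain $0\ge f(x^*)-f(x)\ge \tfrac{1}{\gamma}\nabla f(x)^\top(x^*-x)+\tfrac{\mu}{2}\|x^*-x\|^2$ by $\tfrac{1}{\theta}-1\ge 0$ and adds, whereas you replace $\gamma$ by $\theta\gamma$ in front of the nonpositive quantity and then divide; these are just two bookkeepings of the same elementary manipulation.
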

\begin{proof}
$(\1,\2)$-strong quasar-convexity states that $0 \ge f(\xStar) - f(x) \ge \frac{1}{\1} \grad f(x)^\top (\qx - x) + {\frac{\2}{2} \norm{\qx - x}^2}$ for some $\qx$ and all $x$ in the domain of $f$. Multiplying by $\ff{1}{\theta}-1 \ge 0$, it follows that \newline
$f(\qx) \ge f(x) + \frac{1}{\1} \grad f(x)^\top (\xStar - x) + \frac{\2}{2} \norm{ x - \xStar }^2 \ge f(x) + \frac{1}{\1 \theta} \grad f(x)^\top (\xStar - x) + \frac{\2}{2\theta} \norm{\qx - x}^2$.

Note that any $(\1,\2)$-strongly quasar-convex function is also $(\1,\tilde{\2})$-strongly quasar-convex for any $\tilde{\2} \in [0,\2]$. Thus, the restriction $\1 \in (0,1]$ in the definition of quasar-convexity may be made without any loss of generality compared to the restriction $\1 > 0$.
\end{proof}

\begin{observation}
\label{obs:scaling}
The parameter $\1$ is a dimensionless quantity, in the sense that if $f$ is $\1$-quasar-convex on $\R^n$, the function $g(x) \defeq a \cdot f(b x)$ is also $\1$-quasar-convex on $\R^n$, for any $a \ge 0, b \in \R$.
\end{observation}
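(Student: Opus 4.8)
The plan is to reduce the assertion about $g$ directly to the defining inequality \eqref{eq:qc} for $f$, via the chain rule and a linear change of variables, after first disposing of the degenerate cases.

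First I would treat the cases $a = 0$ or $b = 0$. In either case $g$ is a constant function, hence convex; and a convex function with minimizer $\xStar$ satisfies $\grad g(x)^\top(\xStar - x) \le g(\xStar) - g(x) \le 0$, so since $\tfrac{1}{\1} \ge 1$ for $\1 \in (0,1]$ we get $g(\xStar) \ge g(x) + \grad g(x)^\top(\xStar - x) \ge g(x) + \tfrac{1}{\1}\grad g(x)^\top(\xStar - x)$, i.e.\ $g$ is $\1$-quasar-convex. (Concretely, since $g$ is constant, any point serves as $\xStar$ and both sides simply equal $g(x)$.)

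For the main case $a > 0$, $b \ne 0$, let $\xStar$ be a minimizer with respect to which $f$ is $\1$-quasar-convex, and set $z^{*} \defeq \xStar / b$. Since $a \ge 0$, we have $g(x) = a f(bx) \ge a f(\xStar) = g(z^{*})$ for every $x \in \R^n$, so $z^{*}$ is a minimizer of $g$. By the chain rule $\grad g(x) = a b\,\grad f(bx)$, and since $z^{*} - x = \tfrac{1}{b}(\xStar - bx)$, substituting into the right-hand side of \eqref{eq:qc} for $g$ at $z^{*}$ gives
\[
g(x) + \tfrac{1}{\1}\grad g(x)^\top(z^{*}-x) = a\Bigl( f(bx) + \tfrac{1}{\1}\grad f(bx)^\top(\xStar - bx)\Bigr) \le a\,f(\xStar) = g(z^{*}),
\]
where the inequality is exactly the $\1$-quasar-convexity of $f$ evaluated at the point $bx \in \R^n$ (a valid instance of \eqref{eq:qc}, since that inequality is assumed to hold at every point of $\R^n$). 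Hence $g$ is $\1$-quasar-convex with respect to $z^{*}$.

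I expect no genuine obstacle: the computation is one application of the chain rule plus a rearrangement, and the parameter $\1$ is untouched throughout. The only points requiring care are recalling that constant functions are $\1$-quasar-convex (needed when $a=0$ or $b=0$) and correctly rescaling the minimizer to $z^{*} = \xStar/b$. The identical argument, now also tracking the term $\tfrac{\2}{2}\|\xStar - bx\|^2 = \tfrac{\2 b^2}{2}\|z^{*} - x\|^2$, shows more generally that if $f$ is $(\1,\2)$-strongly quasar-convex then $g$ is $(\1,\, a b^{2}\2)$-strongly quasar-convex.
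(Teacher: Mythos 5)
Your proof is correct. It takes a genuinely different route from the paper's. The paper first invokes the equivalent non-differential ``line-segment'' characterization of quasar-convexity from Lemma~\ref{lem:star_char} (inequality~\eqref{eq:star_def_undiff}) and then checks that inequality directly for $g$ after the change of variables, never touching gradients at all. You instead work straight from the gradient-based Definition~\ref{defn:defns}, applying the chain rule $\grad g(x) = ab\,\grad f(bx)$ and observing that the factor $b$ from the chain rule cancels the factor $1/b$ from $z^{*}-x=(\xStar-bx)/b$. Your argument is more self-contained (it does not depend on Lemma~\ref{lem:star_char}, which the paper develops separately), while the paper's approach is slightly shorter once that lemma is in hand and avoids any differentiation. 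Both handle the degenerate $a=0$ or $b=0$ case by noting $g$ is constant, and your extension to the $(\1,\2)$-strongly quasar-convex case, with the new parameter $ab^{2}\2$, is also correct.
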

\begin{proof}
If $a$ or $b$ is 0, then $g$ is constant so the claim is trivial. Now suppose $a,b \ne 0$.
Let $x^*$ denote the quasar-convex point of $f$. Observe that as $x^*$ minimizes $f$, $x^* / b$ minimizes $g$.
By \eqref{eq:star_def_undiff}, for all $x \in \R^n$ we have
\begin{flalign*}
\ff{1}{a}g((tx^*+(1-t)x)/b) &= f(tx^*+(1-t)x) \\
&\le \1 t f(x^*) + (1-\1 t)f(x) \\
&= \1 t \cdot \ff{1}{a} g(x^*/ b) + (1-\1 t) \cdot \ff{1}{a} g(x / b)~.
\end{flalign*}
Multiplying by $a$, we have
$g(t(x^* / b) + (1-t)(x/b)) \le \1 t g(x^* / b) + (1-\1 t)g(x / b)$ for all $x \in \R^n$.
Since $x/b$ can take on any value in $\R^n$, this means that $g$ is $\1$-quasar-convex with respect to $x^* / b$.
\end{proof}

\subsection{Construction of Quasar-Convex Functions}
\label{app:quasar_construction}
We now briefly describe some basic ``building blocks'' and closure properties of the family of quasar-convex functions,
inspired by the analogous discussion for star-convex functions in Appendix A of \cite{lee2016optimizing}.
(Recall also that star-convex functions, such as the examples in \cite{lee2016optimizing}, are quasar-convex with $\gamma=1$.)

\newcommand{\bfm}{\mathbf{M}}
\begin{enumerate}
\item Suppose $f : \R^n \ra \R$ is $(\1,\2)$-quasar-convex with respect to $x^* = \mathbf{0}$. Let $a \ge 0, c \in \R$ be scalars, $\bfm \in \R^{m \times n}$, and $b \in \R^m$. Then $g(x) = a \cdot f(\bfm(x + b)) + c$ is $(\1, \2\cdot \sigma_{\text{min}}^2(\bfm))$-quasar-convex, where $\sigma_{\text{min}}(\bfm)$ denotes the smallest singular value of $\bfm$.
\begin{itemize}
\item \emph{Proof}: It is easy to see that adding the constant $c$ does not affect the quasar-convexity properties, and that $g(x-b)$ has the same quasar-convexity properties as $g(x)$; so, by Observation \ref{obs:scaling}, it suffices to prove the claim for $a = 1, b = \mathbf{0}, c =0$. We have $f(\mathbf{0}) \ge f(x) - \ff{1}{\1} \G f(x)^\top x + \ff{\2}{2} \norm{x}^2$ for all $x \in \R^n$, by $(\1,\2)$-quasar-convexity of $f$ with respect to $x^* = \mathbf{0}$. So $g(\mathbf{0}) = f(\mathbf{0}) \ge f(\bfm y) - \ff{1}{\1} \G f(\bfm y)^\top (\bfm y) + \ff{\2}{2} \norm{\bfm y}^2 =  f(\bfm y) - \ff{1}{\1} \bfm \G f(\bfm y)^\top y + \ff{\2}{2} \norm{\bfm y}^2 \ge f(\bfm y) - \ff{1}{\1} \bfm \G f(\bfm y)^\top y + \ff{\2\sigma_{\text{min}}^2(\bfm)}{2} \norm{y}^2 = \linebreak g(y) + \ff{1}{\1} \G g(y)^\top (\mathbf{0} - y) +  \ff{\2\sigma_{\text{min}}^2(\bfm)}{2} \norm{\mathbf{0} - y}^2$ for all $y \in \R^m$, which proves the claim.

\end{itemize}
\item If $f,g$ are $(\1_1,\2_1)$ and $(\1_2,\2_2)$ quasar-convex respectively with respect to the same minimizer $x^*$, then $h(x) = f(x) + g(x)$ is $(\min\{\1_1,\1_2\}, \,\2_1 + \2_2)$ quasar-convex with respect to the same minimizer $x^*$.
\begin{itemize}
\item \emph{Proof}:
$h(x^*) = f(x^*) + g(x^*) \ge f(x) + \ff{1}{\1_1} \G f(x)^\top (x^* - x) + \ff{\2_1}{2} \norm{x^* - x}^2 +  g(x) + \ff{1}{\1_2} \G g(x)^\top (x^* - x) + \ff{\2_1}{2} \norm{x^* - x}^2 = 
h(x) + \ff{\2_1 + \2_2}{2} \norm{x^* - x}^2 + \ff{1}{\1_1} \G f(x)^\top (x^* - x) + \ff{1}{\1_2} \G g(x)^\top (x^* - x) \ge h(x) + \ff{\2_1 + \2_2}{2} \norm{x^* - x}^2 + \ff{1}{\min\{\1_1,\1_2\}} (\G f(x) + \G g(x))^\top (x^* - x)$ as desired, since $\G f(x)^\top (x^* - x) \le f(x^*) - f(x) \le 0$ and similarly $\G g(x)^\top (x^* - x) \le 0$.
\end{itemize}
\item Suppose $f, g$ are $\1$-quasar-convex with respect to the same point $x^*$, and $f(x^*) = g(x^*) = 0$. Then $h(x) = f(x)g(x)$ is also $\1$-quasar-convex with respect to $x^*$.
\begin{itemize}
\item \emph{Proof}:
Using Lemma \ref{lem:star_char} and the fact that $f, g$ are nonnegative,
$h(tx^* + (1-t)x) = f(tx^* + (1-t)x)g(tx^* + (1-t)x) \le (1-\gamma t) f(x) \cdot (1-\gamma t) g(x) = (1 + (\gamma t)^2 - 2\gamma t) h(x)$ for all $t \in [0,1]$. As $\gamma t \in [0,1]$, $(\gamma t)^2 \le \gamma t$, so $(1 + (\gamma t)^2 - 2\gamma t) h(x) \le (1 - \gamma t) h(x)$ by nonnegativity of $h$. Applying Lemma \ref{lem:star_char} yields the result.
\end{itemize}
\item 
Let $\mathcal{X}$ be a bounded star-convex set with $C^1$ boundary and star center $x^* = \mathbf{0}$. Let $f : \R \ra \R$ be $\gamma$-quasar-convex with respect to the point 0, with $f(0) = 0$, and let $g(x)$ be an arbitrary $C^1$ nonnegative function defined on the boundary of $\mathcal{X}$. For each point $x \ne x^* \in \X$, let $P(x)$ be the (unique) intersection of the boundary of $\X$ with the ray from $x^*$ to $x$, and let $P(x^*) = x^*$. Then the function $h(x) = f(\norm{x}) \cdot g\lt P(x) \rt$ is nonnegative, $C^1$, and $\gamma$-quasar-convex on $\mathcal{X}$ with respect to $x^* = \mathbf{0}$. (Note: The rightmost function plotted in Figure \ref{fig:examples} was constructed in this manner, where $\X$ is the unit circle [so $P(x) = \ff{x}{\norm{x}}$], $f(x) = \ff{x^2}{1+x^2}$, and $g$ was a randomly generated linear combination of exponentiated high-frequency trigonometric functions.)
\begin{itemize}
\item \emph{Proof}: Nonnegativity of $f$ and $g$ implies that of $h$. The properties of $\X$ imply that $\lim\limits_{x \ra \mathbf{0}} h(x) = 0 = h(\mathbf{0})$, so the fact that $f,g \in C^1$ implies that $h$ is also $C^1$. Also, for any $x \ne x^*$ and any $t \in [0,1)$, $h(t x^* + (1-t)x) = h((1-t)x) = f(\norm{(1-t)x}) \cdot g(P((1-t)x)) =
\linebreak
f(t\cdot 0 + (1-t) \cdot \norm{x}) \cdot g(P(x)) \le (1- \gamma t) f(\norm{x}) \cdot g(P(x)) = (1-\gamma t) h(x) = \gamma t h(x^*) + (1- \gamma t) h(x)$. To obtain the preceding inequalities, we used Lemma \ref{lem:star_char} for $f$, and the fact that $P(tx) = P(x)$ for any $t \in (0, 1]$, since the ray from $x^*$ to $tx$ also passes through $x$.
Finally, it is trivially true that $h(t x^* + (1-t)x) \le \gamma t h(x^*) + (1- \gamma t) h(x)$ when $x = x^*$ or when $t = 1$, so  applying Lemma \ref{lem:star_char} yields the result.
\end{itemize}
\end{enumerate}

\section{Lower Bound Proofs}
\label{sec:lb-app}

In this section, we use $\mathbf{0}$ to denote a vector with all entries equal to 0, and $\mathbf{1}$ to denote a vector with all entries equal to 1.

\subsection{Proof of Lemma~\ref{lem:main-lb-quasar}} \label{sec:lem-lb-proof}

Before we prove Lemma~\ref{lem:main-lb-quasar}, we prove two useful results related to the properties of $q$ and $\Upsilon$. For convenience, these functions are restated below:
\funcdefs
\begin{observation}\label{obs:props-q}
$q$ is convex and $2$-smooth with minimizer $x^* = \mathbf{1}$. Also, for any $1 \le j_1 < j_2 \le T$,
$$
q(x) = \frac{1}{2} \grad q(x)^\top (x - x^{*})  \ge \max\left\{ \frac{1}{4} (x_1-1)^2, \frac{(x_{j_1} - x_{j_2})^2}{4 (j_2 - j_1)} \right\}.
$$
\end{observation}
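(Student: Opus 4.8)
The plan is to exploit that $q$ is a pure quadratic and dispatch the four assertions one at a time. First I would substitute $z \defeq x - \mathbf{1} = x - x^{*}$, so that $q(x) = \tfrac14\bigl(z_1^2 + \sum_{i=1}^{T-1}(z_i-z_{i+1})^2\bigr) = \tfrac14\, z^{\top} M z$, where $M$ is the fixed symmetric tridiagonal matrix with diagonal entries $(2,2,\dots,2,1)$ and all off-diagonal entries equal to $-1$. Convexity is then immediate: $q$ is a nonnegative combination of squares of affine functions, equivalently $M \succeq 0$ since $z^{\top}M z = z_1^2 + \sum_{i=1}^{T-1}(z_i-z_{i+1})^2 \ge 0$. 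The same expression shows $q \ge 0$ everywhere with $q(\mathbf{1}) = 0$, so $x^{*} = \mathbf{1}$ is a minimizer of $q$ (and, since $z^{\top}M z = 0$ forces $z = 0$, it is the unique one).

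For the smoothness claim, the key point is that $\nabla^2 q = \tfrac12 M$ is a \emph{constant} matrix, so the Lipschitz constant of $\nabla q$ is exactly $\tfrac12\|M\|_{\mathrm{op}}$. I would bound this operator norm by a standard estimate — e.g. Gershgorin's circle theorem, or by writing $M \preceq 2I - A$ where $A$ is the adjacency matrix of the path graph on $T$ vertices and using $\|A\|_{\mathrm{op}}\le 2$ — to obtain the stated bound. For the identity $q(x) = \tfrac12\nabla q(x)^{\top}(x - x^{*})$, note that $q$ is homogeneous of degree $2$ in $z$, so $\nabla q(x) = \tfrac12 M z$ and hence $\nabla q(x)^{\top}(x-x^{*}) = \nabla q(x)^{\top} z = \tfrac12 z^{\top} M z = 2q(x)$; this is just Euler's identity for degree-$2$ homogeneous functions.

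Finally, the two lower bounds follow by elementary manipulations of the explicit form $q(x) = \tfrac14(x_1-1)^2 + \tfrac14\sum_{i=1}^{T-1}(x_i - x_{i+1})^2$. Since this is a sum of nonnegative terms, discarding all but the first gives $q(x) \ge \tfrac14(x_1-1)^2$. For the coupling bound, fix $j_1 < j_2$, telescope $x_{j_1} - x_{j_2} = \sum_{i=j_1}^{j_2-1}(x_i - x_{i+1})$, and apply Cauchy--Schwarz: $(x_{j_1}-x_{j_2})^2 \le (j_2 - j_1)\sum_{i=j_1}^{j_2-1}(x_i - x_{i+1})^2 \le (j_2 - j_1)\sum_{i=1}^{T-1}(x_i - x_{i+1})^2 \le 4(j_2 - j_1)\,q(x)$, and rearranging yields $q(x) \ge \frac{(x_{j_1}-x_{j_2})^2}{4(j_2 - j_1)}$.

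I expect essentially no obstacles: each piece is a one-line computation, and the only place requiring a little care is the operator-norm estimate for the constant tridiagonal Hessian, which is needed solely to pin down the precise smoothness constant and which any standard eigenvalue bound handles.
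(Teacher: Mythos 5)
Your proof takes essentially the same route as the paper's for the convexity, minimizer, the identity $q(x) = \tfrac12 \nabla q(x)^\top (x - x^{*})$, and the two lower bounds: the paper likewise observes $q$ is a nonnegative convex quadratic vanishing at $\mathbf 1$, derives the identity from $\nabla^2 q$ being constant (your Euler's-identity framing is equivalent), and gets the coupling bound from Jensen/Cauchy--Schwarz applied to the telescoping sum (the paper's proof writes the inner sum with upper limit $j_2$, which should be $j_2-1$ as you have it). Those parts are all fine and are not a genuinely different argument.

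The one place your sketch would not close as written is the smoothness constant. With $\nabla^2 q = \tfrac12 M$, both Gershgorin and your bound $M \preceq 2I - A$ with $\|A\|_{\mathrm{op}} \le 2$ give $\|M\|_{\mathrm{op}} \le 4$, hence that $q$ is $2$-smooth, \emph{not} $\tfrac12$-smooth; if you carry the estimate through you do not ``obtain the stated bound,'' you obtain a constant four times larger. This is not a defect of your estimate: $q$ genuinely fails to be $\tfrac12$-smooth. Taking $z = (1,-1,1,-1,\dots)$ gives $z^\top M z / \|z\|^2 = 4 - 3/T$, so $\lambda_{\max}(\nabla^2 q) \to 2$ as $T \to \infty$, and already at $T=2$ one computes $\lambda_{\max}(\nabla^2 q) = (3+\sqrt 5)/4 \approx 1.31 > \tfrac12$. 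The constant $\tfrac12$ in Observation~\ref{obs:props-q} appears to be a slip (perhaps a confusion with the $\tfrac12$ prefactor in $\nabla^2 q = \tfrac12 M$); the downstream use in Lemma~\ref{lem:lb-unscaled} is unaffected in substance, since any $O(1)$ smoothness constant suffices and only the absolute constants in the final lower bound shift, but a proof that aims to verify $\tfrac12$ literally, as yours does, cannot succeed, and you should have flagged that your method does not reproduce the claimed number.
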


\begin{proof}
Convexity and $2$-smoothness of $q$ follow from definitions. It is easy to see that $q$ is always nonnegative and $q(\mathbf{1}) = 0$, so $\mathbf{1}$ minimizes $q$. In fact $\mathbf{1}$ is the unique minimizer, since $q$ is strictly positive for all nonconstant vectors and all vectors with $x_1 \ne 1$.

Notice that as $q$ is a convex quadratic, $q(x) = \frac{1}{2} (x-x^{*})^\top \grad^2 q(x) (x-x^{*})$ where $\grad^2 q(x)$ is a constant matrix. Therefore $\grad q(x) = \grad^2 q(x) (x - x^{*})$. It follows that $q(x) = \frac{1}{2} \grad q(x)^\top (x - x^{*})$.

By definition $q(x) \ge  \frac{1}{4} (x_1-1)^2$. Furthermore, $\frac{1}{j_2 - j_1} \sum_{i=j_{1}}^{j_{2}} (x_i - x_{i+1})^2 \ge \left( \frac{1}{j_2 - j_1} \sum_{i=j_1}^{j_2} (x_i - x_{i+1}) \right)^2 \linebreak = \frac{(x_{j_1} - x_{j_2})^2}{(j_2 - j_1)^2}$, where the inequality uses that the expectation of the square of a random variable is greater than the square of its expectation. The result follows.
\end{proof}

\noindent Properties of $\Upsilon$ that we will use are listed below.
\begin{lem}\label{lem:props-ups}
   The function $\Upsilon$ satisfies the following.
  \begin{enumerate}
  \item \label{item:ups-grad-zero}
  $\Upsilon'(0) = \Upsilon'(1) = 0$.
  \item \label{item:ups-quasi-convex}
    For all $\theta \le 1$, $\Upsilon'(\theta) \le 0$, and
    for all $\theta \ge 1$, $\Upsilon'(\theta) \ge 0$.
  \item \label{item:ups-min-one} For all $\theta \in \R$ we have $\Upsilon(\theta) \ge
    \Upsilon(1) = 0$, and $\Upsilon(0) \le 10$.
  \item \label{item:ups-large-grad}
  $\Upsilon'(\theta) < -1$ for all $\theta \in (-\infty,-0.1] \cup [0.1,0.9]$.
  \item \label{item:ups-smooth} $\Upsilon$ is $180$-smooth.
    \item \label{item:ups-theta} For all $\theta \in \R$ we have $\Upsilon(\theta) \le \min\{30 \theta^4 - 40 \theta^3 + 10, \,\, 60 (\theta-1)^2\}$,
    and $\Upsilon(0) \ge 5$.
    \item \label{item:ups-quasar} For all $\theta \not\in (-0.1,0.1)$ we have $40 (\theta - 1) \Upsilon'(\theta) \ge \Upsilon(\theta)$.
  \end{enumerate}
\end{lem}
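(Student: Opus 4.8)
The plan is to work from the closed forms, obtained by the fundamental theorem of calculus and the quotient rule,
\[
\Upsilon'(\theta)=120\,\frac{\theta^{2}(\theta-1)}{1+\theta^{2}},\qquad
\Upsilon''(\theta)=120\,\frac{\theta^{4}+3\theta^{2}-2\theta}{(1+\theta^{2})^{2}},
\]
and to read off items \ref{item:ups-grad-zero} and \ref{item:ups-quasi-convex} directly: the denominator is always positive and $\theta^{2}\ge 0$, so $\Upsilon'(\theta)$ vanishes exactly at $\theta\in\{0,1\}$ and has the sign of $\theta-1$. Item \ref{item:ups-quasi-convex} then shows $\Upsilon$ is decreasing on $(-\infty,1]$ and increasing on $[1,\infty)$, so $\theta=1$ is the global minimizer and $\Upsilon(\theta)\ge\Upsilon(1)=0$. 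For the remaining size estimates in items \ref{item:ups-min-one} and \ref{item:ups-theta}, I would write $\Upsilon(0)=120\int_{0}^{1}\frac{t^{2}(1-t)}{1+t^{2}}\,dt$ and sandwich it using $1\le 1+t^{2}\le 2$ on $[0,1]$ together with $\int_{0}^{1}t^{2}(1-t)\,dt=\tfrac1{12}$, giving $5\le\Upsilon(0)\le 10$.

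For item \ref{item:ups-large-grad}, on $[0.1,0.9]$ I would clear the positive denominator to reduce $\Upsilon'(\theta)<-1$ to the polynomial inequality $\phi(\theta)\defeq-120\theta^{3}+119\theta^{2}-1>0$; since $\phi'(\theta)=\theta(238-360\theta)$ has its only positive root inside $(0.1,0.9)$, $\phi$ is unimodal on that interval, so it suffices to check the two endpoints, where $\phi$ is positive. For $\theta\le-0.1$ I would instead bound directly, using that $\tfrac{\theta^{2}}{1+\theta^{2}}$ is increasing in $|\theta|$ and $\theta-1\le-1.1$, to get $\Upsilon'(\theta)\le120\cdot\tfrac{0.01}{1.01}\cdot(-1.1)<-1$. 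Item \ref{item:ups-smooth} amounts (since $\Upsilon$ is smooth) to $|\theta^{4}+3\theta^{2}-2\theta|\le\tfrac32(1+\theta^{2})^{2}$ for all $\theta$; expanding, the upper half reduces to $\tfrac12\theta^{4}+2\theta+\tfrac32\ge0$, whose derivative $2\theta^{3}+2$ vanishes only at $\theta=-1$, where the value is $0$ (a global minimum — this is exactly why the constant $180$ is tight), and the lower half reduces to $\tfrac52\theta^{4}+6\theta^{2}-2\theta+\tfrac32\ge0$, which holds because $6\theta^{2}-2\theta+\tfrac32=6(\theta-\tfrac16)^{2}+\tfrac43>0$.

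The two upper bounds in item \ref{item:ups-theta} I would prove by a single device: set $p(\theta)\defeq30\theta^{4}-40\theta^{3}+10$ and $r(\theta)\defeq60(\theta-1)^{2}$, and compute $(p-\Upsilon)'(\theta)=120\,\tfrac{\theta^{4}(\theta-1)}{1+\theta^{2}}$ and $(r-\Upsilon)'(\theta)=\tfrac{120(\theta-1)}{1+\theta^{2}}$, both of which have the sign of $\theta-1$; hence $p-\Upsilon$ and $r-\Upsilon$ are each minimized at $\theta=1$, where they vanish, so $\Upsilon\le\min\{p,r\}$ everywhere. Finally, for item \ref{item:ups-quasar} I would use the identity $40(\theta-1)\Upsilon'(\theta)=\tfrac{4800\,\theta^{2}(\theta-1)^{2}}{1+\theta^{2}}$ together with the bounds just proved. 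Whenever $79\theta^{2}\ge1$ — in particular for $|\theta|\ge0.12$ — one has $\tfrac{80\theta^{2}}{1+\theta^{2}}\ge1$, so $\Upsilon(\theta)\le60(\theta-1)^{2}\le\tfrac{80\theta^{2}}{1+\theta^{2}}\cdot60(\theta-1)^{2}=40(\theta-1)\Upsilon'(\theta)$; on the remaining sliver $0.1\le|\theta|\le0.12$ the bound $\Upsilon(\theta)\le p(\theta)$ keeps $\Upsilon(\theta)$ below $11$ while $\tfrac{4800\theta^{2}(\theta-1)^{2}}{1+\theta^{2}}$ stays above $11$, closing the case. The main obstacle is precisely this regime near $|\theta|=0.1$ in items \ref{item:ups-large-grad} and \ref{item:ups-quasar}: the clean quadratic bound $\Upsilon\le60(\theta-1)^{2}$ is slightly too weak to reach $\theta=\pm0.1$, so one is forced into the explicit cubic/quartic estimates and a bit of interval bookkeeping; everything else is a routine one-variable calculus check.
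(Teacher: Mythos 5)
Your proposal is correct, and it differs from the paper's proof in several interesting ways. The paper dispatches items \ref{item:ups-grad-zero}--\ref{item:ups-large-grad} by citing Lemma~2 of Carmon et al., whereas you prove them directly from the closed form $\Upsilon'(\theta)=120\,\theta^2(\theta-1)/(1+\theta^2)$; your arguments there are elementary and sound. For item \ref{item:ups-smooth} both proofs bound $|\Upsilon''|$, but the paper simply asserts the ratio is at most $\tfrac{3}{2}$; you actually verify the two polynomial inequalities and identify $\theta=-1$ as the place where the bound is tight, which is a genuine improvement in rigor. For item \ref{item:ups-theta} the paper integrates the pointwise bounds $\tfrac{1}{1+t^2}\le 1$ and $\tfrac{t^2}{1+t^2}\le 1$ directly; you instead show $p-\Upsilon$ and $r-\Upsilon$ have derivative with the sign of $\theta-1$ and vanish at $\theta=1$ --- a different but equally valid device. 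The most substantive divergence is item \ref{item:ups-quasar}: the paper splits at $|\theta|=1$ and on $[0.1,1]$ must establish $P(\theta)=1-237\theta^4+476\theta^3-240\theta^2\le 0$ via a root analysis of a quartic, while you observe that the weaker bound $\Upsilon\le 60(\theta-1)^2$ already closes the case once $79\theta^2\ge 1$, leaving only the narrow strip $0.1\le|\theta|\lesssim 0.113$ where a crude numerical sandwich (below 11 vs.\ above 11) suffices. Your split does considerably less polynomial bookkeeping than the paper's; the only thing you should make explicit when writing this up is the short calculation showing $\tfrac{4800\,\theta^2(\theta-1)^2}{1+\theta^2}>11$ on $|\theta|\in[0.1,0.12]$ (the minimum over that strip is attained at $\theta=0.1$, where the value is $\approx 38.5$), since as written it reads as an unverified claim.
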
 
\begin{proof}
Properties \ref{item:ups-grad-zero}-\ref{item:ups-large-grad} were proved in \cite[Lemma~2]{carmon2017lower2}.

\noindent \emph{Property \ref{item:ups-smooth}.} $|\Upsilon''(\theta)| = 120 \left|\frac{\theta (\theta^3 + 3 \theta - 2)}{(1 + \theta^2)^2}\right| \le 120 \cdot \ff{3}{2} = 180$ for all $\theta \in \R$. Thus, for any $\theta_1,\theta_2 \in \R$, $|\Upsilon'(\theta_1) - \Upsilon'(\theta_2)| \le \max\limits_{\theta \in [\theta_1,\theta_2]} |\Upsilon''(\theta)| \cdot |\theta_1 - \theta_2| \le 180 |\theta_1-\theta_2|$.

\noindent \emph{Property \ref{item:ups-theta}.} We have
$\Upsilon(0) = 120 \int_{0}^{1} \frac{t^2 (1-t)}{1 + t^2} \,dt \ge 120 \int_{0}^{1} \ff{t^2 (1-t)}{2} \,dt = \frac{120}{2 \cdot 12} = 5$. For all $\theta \in \R$ we have
$\Upsilon(\theta) = 120 \int_{1}^{\theta} \frac{t^2 (t-1) }{1 + t^2} \,dt \le 120 \int_{1}^{\theta} t^2 (t-1) \,dt = 120 ( (\theta^4/4 + \theta^3/3) -  (1/4 - 1/3) ) = 30 \theta^4 - 40 \theta^3 + 10$. In addition, since $\ff{t^2}{1+t^2} \le 1$ for all $t$, we have for all $\theta \in \R$ that
$\Upsilon(\theta) \le 120 \int^{\theta}_1 (t-1) \,dt = 120 (\theta-1)^2/2$.

\noindent \emph{Property \ref{item:ups-quasar}.}
If $\theta \in (\infty, -1.0] \cup [1.0, \infty)$ then $\ff{\theta^2}{1+\theta^2} \ge \ff{1}{2}$, so by property \ref{item:ups-theta} we have
\begin{flalign*}
\Upsilon(\theta) + 40 (1 - \theta) \Upsilon'(\theta) &\le
60 (\theta-1)^2 - 40 \cdot 120 \frac{\theta^2 (\theta-1)^2}{1 + \theta^2} \\
&\le 60 (\theta - 1)^2 - 40 \cdot 60 (\theta-1)^2 \\
&= -60 \cdot 39 (\theta-1)^2 \\
&\le 0. \tag*{\qedhere}
\end{flalign*}

Alternatively, if $\theta \in [-1.0,-0.1] \cup [0.1, 1.0]$ then $\ff{1}{1+\theta^2} \ge \ff{1}{2}$, so by property \ref{item:ups-theta} we have
\begin{flalign*}
\Upsilon(\theta) + 40 (1 - \theta) \Upsilon'(\theta) &\le 10 + 30 \theta^4 - 40 \theta^3  - 40 \cdot 120 \frac{\theta^2 (\theta-1)^2 }{1 + \theta^2} \\
&\le 10 \left( 1 + \theta^2 \left( 3 \theta^2 - 4 \theta - 240 (\theta-1)^2 \right) \right) \\
&= 10 \left( 1 - 237 \theta^4 + 476 \theta^3 - 240 \theta^2 \right) \\
&= 10P(\theta)~,
\end{flalign*}
where we define $P(\theta) \defeq 1 - 237 \theta^4 + 476 \theta^3 - 240 \theta^2$.
Observe that $P'(\theta) = {-12 \theta (40 - 119 \theta + 79 \theta^2)}$ has exactly three roots: at $\theta = 0, \theta = 1$ and $\theta = 40/79$. Furthermore, at $\theta = 1$, $\theta = 40/79$ and $\theta = 0.1$ we have $P(\theta) \le 0$, which implies $P(\theta) \le 0$ for $\theta \in [0.1,1]$. We conclude that $\Upsilon(\theta) + 40 (1 - \theta) \Upsilon'(\theta) \le 0$ for $\theta \in [0.1,1]$. In addition, $P(\theta)$ is negative while $P'(\theta)$ is positive for $\theta = -0.1$, which means that $P(\theta)$ and thus $\Upsilon(\theta) + 40 (1 - \theta) \Upsilon'(\theta)$ are also negative on $[-1.0, -0.1]$.
\end{proof}

In order to prove Lemma~\ref{lem:main-lb-quasar}, we first prove an ``unscaled version'' in Lemma~\ref{lem:lb-unscaled}. This is the critical and most difficult part of the proof of the result;
some intuition is provided in Section \ref{sec:lb}.
\lemUnscaledLB*

\begin{proof}
Since $\sigma \in (0, 10^{-4}]$, $\Upsilon$ is $180$-smooth, and $q$ is $2$-smooth, we deduce $\fB$ is $3$-smooth. By Observation \ref{obs:props-q} and Lemma \ref{lem:props-ups}.\ref{item:ups-min-one} we deduce $\fB(\mathbf{1}) = 0 < \fB(x)$ for all $x \ne \mathbf{1}$. Therefore, $x^{*} = \mathbf{1}$ is the unique minimizer of $\fB$.

Now, we will show $\fB$ is $\frac{1}{100 T \sqrt{\sigma}}$-quasar-convex, i.e. that $\G \fB(x)^\top(x-\mathbf{1}) \ge \ff{\fB(x)-\fB(\mathbf{1})}{100 T \sqrt{\sigma}}$ for all $x \in \R^T$. Define
\begin{flalign*}
\mathcal{A} &\defeq \{ i : x_i \in (-\infty, -0.1] \cup (0.9,\infty) \} \\
\mathcal{B} &\defeq \{ i : x_i \in (-0.1,0.1) \} \\
\mathcal{C} &\defeq \{ i : x_i \in [0.1,0.9] \}.
\end{flalign*}

\noindent First, we derive two useful inequalities. By Observation~\ref{obs:props-q} and the fact that $\Upsilon'(x_i) \le 0$ for $i \in \mathcal{B}$,
\begin{flalign}
\grad \bar{f}_{T,\sigma}(x)^\top (x - \mathbf{1}) &= \G q(x)^\top(x - \mathbf{1}) + \sigma \su{i \in \mathcal{A} \cup \mathcal{B} \cup \mathcal{C}}{} (x_i-1)\Upsilon'(x_i) \nonumber \\
&\ge 2 q(x) + \sigma \sum_{i \in \mathcal{A} \cup \mathcal{C} } (x_i - 1) \Upsilon'(x_i)~. \label{ineq-grad-ups}
\end{flalign}

By Lemma~\ref{lem:props-ups}.\ref{item:ups-quasi-convex} and \ref{lem:props-ups}.\ref{item:ups-theta} we deduce $\sum_{i \in \mathcal{B} \cup \mathcal{C}}\Upsilon(x_i) \le | \mathcal{B} \cup \mathcal{C} | \Upsilon(-0.1) \le 11 T$, so it follows that $\fB(x) \le q(x) + 11T\sigma + \sigma\sum_{i \in \mathcal{A}} \Upsilon(x_i)$, and therefore using $T \ge \sigma^{-1/2}$ and nonnegativity of $\Upsilon$ and $q$, we have
\begin{flalign}
 \frac{\fB(x) - \fB(\mathbf{1})}{100 T \sqrt{\sigma}} &= \frac{\fB(x)}{100 T \sqrt{\sigma}} \nonumber \\
 &\le \frac{11T\sigma}{100T\sqrt{\sigma}} + \frac{\sigma}{100T\sqrt{\sigma}} \sum_{i \in \mathcal{A}} \Upsilon(x_i)  + \frac{1}{100T\sqrt{\sigma}} q(x) \nonumber \\
 &\le \frac{11}{100} \sigma^{1/2} + \frac{\sigma}{100} \sum_{i \in \mathcal{A}} \Upsilon(x_i)  + \frac{1}{100} q(x) \nonumber \\
 &\le \frac{11}{100} \sigma^{1/2} + \frac{\sigma}{40} \sum_{i \in \mathcal{A}} \Upsilon(x_i)  + q(x) \label{ineq-func-ups}
\end{flalign}
We now consider three possible cases for the values of $x$.

\begin{enumerate}
\item
Consider the case that $x_1 \not\in [0.9,1.1]$. We have
\begin{flalign*}
\grad \fB(x)^\top (x - \mathbf{1}) &\ge  2 q(x)  +  \frac{\sigma}{40} \sum_{i \in \mathcal{A} \cup \mathcal{C} } \Upsilon(x_i) \\
& \ge \frac{0.1^2}{4} + q(x)  +  \frac{\sigma}{40} \sum_{i \in \mathcal{A} \cup \mathcal{C} } \Upsilon(x_i) \\
& = \frac{1}{\sqrt{10^4\sigma}} \cdot \frac{ \sqrt{\sigma}}{4} + \frac{\sigma}{40} \sum_{i \in \mathcal{A} \cup \mathcal{C} } \Upsilon(x_i) + q(x) \\
& \ge \frac{ \sqrt{\sigma}}{4} + \frac{\sigma}{40} \sum_{i \in \mathcal{A} \cup \mathcal{C} } \Upsilon(x_i) + q(x) \\
&\ge \frac{\fB(x) - \fB(\mathbf{1}) }{100 T \sqrt{\sigma} }
\end{flalign*}
where the first inequality uses \eqref{ineq-grad-ups} and Lemma~\ref{lem:props-ups}.\ref{item:ups-quasar}, the second inequality uses Observation~\ref{obs:props-q} and $x_1 \not\in [0.9,1.1]$, the penultimate inequality uses $\sigma \in (0, 10^{-6}] \subset (0,10^{-4}]$, and the final inequality uses \eqref{ineq-func-ups} and nonnegativity of $\Upsilon$.
\item
Consider the case that $\mathcal{B} = \emptyset$. By Lemma~\ref{lem:props-ups}.\ref{item:ups-quasar} and convexity of $q(x)$,
\begin{flalign*}
\grad \bar{f}_{T,\sigma}(x)^\top(x - \mathbf{1}) &= \G q(x)^\top(x-\mathbf{1}) + \sigma \su{i \in \mathcal{A} \cup \mathcal{C}}{} (x_i-1)\Upsilon'(x_i) \\
 &\ge q(x) - q(\mathbf{1}) + \f{\sigma}{40} \su{i \in \mathcal{A} \cup \mathcal{C}}{} \Upsilon(x_i) \\
 &= \f{1}{40} \lt q(x) + \sigma \su{i=1}{T} \Upsilon(x_i)\rt - \fB(\mathbf{1}) + \f{39}{40} q(x) \\
 &\ge \frac{\fB(x) - \fB(\mathbf{1})}{40} \\
 &\ge \frac{\fB(x) - \fB(\mathbf{1})}{100T\sqrt{\sigma}}.
\end{flalign*}
\item
Suppose cases 1-2 do not hold, i.e., $x_1 \in [0.9,1.1]$ and $\mathcal{B} \neq \emptyset$. Then there exist some $m \ge 1$ and $j \in \{1,\dots,T-m\}$ such that $x_{j} \ge 0.9$, $x_{j + m} \le 0.1$, and $x_i \in \mathcal{C}$ for all $i \in \{j+1, \dots, j+m-1\}$. Then,
\begin{flalign*}
\grad \bar{f}_{T,\sigma}(x)^\top (x - \mathbf{1} ) &\ge q(x) + \sigma \sum_{i \in \mathcal{A} \cup \mathcal{C}} (x_i - 1) \Upsilon'(x_i) + q(x) \\
&\ge \frac{0.8^2}{4 m} + \sigma \sum_{i \in \mathcal{C}} (x_i - 1) \Upsilon'(x_i) + \sigma \sum_{i \in \mathcal{A}} (x_i - 1) \Upsilon'(x_i) + q(x) \\
&\ge \frac{0.8^2}{4 m} + 0.1 \sigma (m - 2) + \frac{\sigma}{40} \sum_{i \in \mathcal{A}} \Upsilon(x_i) + q(x)  \\
&\ge \f{0.16}{\sqrt{1.6}} \sigma^{1/2} + \frac{\sigma }{40} \sum_{i \in \mathcal{A}} \Upsilon(x_i) + q(x)  \\
&\ge \frac{\fB(x) - \fB(\mathbf{1})}{100T \sqrt{\sigma}}
\end{flalign*}
where the the first inequality holds by \eqref{ineq-grad-ups}, the second inequality uses Observation~\ref{obs:props-q}, the third inequality uses Lemma~\ref{lem:props-ups}.\ref{item:ups-large-grad} and \ref{lem:props-ups}.\ref{item:ups-quasar}, the fourth inequality uses that $m = \sqrt{1.6} \sigma^{-0.5} \ge 2$ minimizes the previous expression, and the final inequality uses \eqref{ineq-func-ups} [and the fact that $0.16 / \sqrt{1.6} > 0.11$].
\end{enumerate}

Finally, suppose $x_t = 0$ for all $t = \ceil{T/2}, \dots, T$. Then we have $\fB(x) - \fB(\mathbf{1}) = \fB(x) \ge \sigma \ceil{T / 2} \Upsilon(0) \ge 2 T \sigma$, where the first inequality uses that $\Upsilon \ge 0$ and $q \ge 0$, and the last inequality uses that $T \ge 1$ and $\Upsilon(0) \ge 5$.
\end{proof}

With Lemma~\ref{lem:lb-unscaled} in hand, we are able to establish Lemma~\ref{lem:main-lb-quasar} which is a scaled version of Lemma~\ref{lem:lb-unscaled}.

\lemMainLBquasar*

\begin{proof}
We have $\sigma^{-1/2} = 10^{2} T \1 \le T$ and $\sigma = \frac{1}{10^4 T^2 \1^2} \le \frac{1}{(L^{1/2} R \epsilon^{-1/2})^2} \le 10^{-6}$, so $\bar{f}_{T,\sigma}$ satisfies the conditions of Lemma~\ref{lem:lb-unscaled}.

Let us verify the properties of $\hat{f}$.
The optimum of $\fB$ is $\mathbf{1}$, but after this rescaling it becomes $x^{*} = \frac{R}{\sqrt{T}} \mathbf{1}$, for which $\norm{x^{*}} = R$.
For all $x,y \in \R^T$, by $3$-smoothness of $\fB$ we have
\begin{flalign*}
\norm{ \grad \hat{f}(x) - \grad \hat{f}(y) } &=
\ff{1}{3} (L R^2 T^{-1}) \cdot (T^{1/2} R^{-1}) \norm{ \grad \fB(x T^{1/2} R^{-1}) - \grad \fB(y T^{1/2} R^{-1}) } \\
&\le (L R^2 T^{-1}) \cdot (T^{1/2} R^{-1})^2 \norm{ x-y } \\
&= L\norm{ x-y }~.
\end{flalign*}
Therefore $\hat{f}$ is $L$-smooth. By the definition of $\sigma$ we have $\frac{1}{100T\sqrt{\sigma}} = \1$, so $\fB$ is $\1$-quasar-convex. As quasar-convexity is invariant to scaling (Observation \ref{obs:scaling}), we deduce that $\hat{f}$ is $\1$-quasar-convex as well.
Finally, given $x^{(k)}_t = 0$ for $t = \ceil{T/2}, \dots, T$, we have
$$
\hat{f}(x^{(k)}) - \inf_z \hat{f}(z) \ge 2 T \sigma \cdot \frac{L R^2}{3T} = \ff{2}{3} L R^2 \sigma = \ff{2}{3} (10^{-2} \1^{-1} L^{1/2} R T^{-1})^2 \ge \ff{50}{3}\epsilon,
$$
where the first transition uses Lemma~\ref{lem:lb-unscaled}, the third transition uses that $\sigma = \frac{1}{10^4 T^2 \1^2}$, and the last transition uses that $T = \ceil{10^{-3} \1^{-1} L^{1/2} R \epsilon^{-1/2}} \le 2 \cdot 10^{-3} \1^{-1} L^{1/2} R\epsilon^{-1/2}$ since \mbox{$10^{-3}\1^{-1} (L^{1/2} R \epsilon^{-1/2}) \ge 1$.}
\end{proof}

\subsection{Proof of Theorem \ref{thm:main-lb-quasar}}
\label{sec:coro-lb-proof}
Before proving Theorem \ref{thm:main-lb-quasar} we recap definitions that were originally provided in \cite{carmon2017lower}.

\begin{defn}
A function $f$ is a first-order zero-chain if for every $x \in \R^n$,
$$
x_i = 0 \quad \forall i \ge t \quad \Rightarrow \quad \grad_i f(x) = 0 \quad \forall i > t.
$$
\end{defn}

\begin{defn}
An algorithm is a first-order zero-respecting algorithm (FOZRA) if, for all $i \in \{1, \dots, n \}$, its iterates $x\ind{0}, x\ind{1}, ... \in \R^{n}$ satisfy
$$
\grad_i f(x\ind{k}) = 0 \quad \forall k \le t \quad \Rightarrow \quad x\ind{t+1}_i = 0 
$$
for all $i \in \{1, \dots, n \}$.
\end{defn}

\begin{defn}
An algorithm $\mathcal{A}$ is a first-order deterministic algorithm (FODA) if there exists a sequence of functions $\mathcal{A}_k$ such the algorithm's iterates satisfy
$$
x\ind{k+1} = \mathcal{A}_k(x\ind{0}, \dots, x\ind{k}, \grad f(x\ind{0}),\dots, \grad f(x\ind{k}))
$$
for all $k \in \mathbb{N}$, input functions $f$, and starting points $x\ind{0}$.
\end{defn}

\begin{observation}\label{obs-zero-respecting}
Consider $\epsilon > 0$, a function class $\mathcal{F}$, and $K \in \mathbb{N}$. If $f : \R^{n} \rightarrow \R$ satisfies
\begin{enumerate}
\item $f$ is a first-order zero-chain,
\item $f$ belongs to the function class $\mathcal{F}$, i.e. $f \in \mathcal{F}$, and
\item $f(x) - \inf_{z} f(z) \ge \epsilon$ for every $x$ such that $x_t = 0$ for all $t \in \{K, K+1, \dots, n\}$;
\end{enumerate}
 then it takes at least $K$ iterations for any  FOZRA to find an $\epsilon$-optimal solution of $f$.
\end{observation}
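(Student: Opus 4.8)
The plan is to deduce the statement from the standard support property of zero-respecting methods applied to zero-chains. Concretely, when a first-order zero-respecting algorithm is run from the origin $x^{(0)} = \mathbf{0}$, I claim that its $k$-th iterate satisfies $x^{(k)}_t = 0$ for all $t > k$. Granting this, the conclusion is immediate: for every $k \le K-1$ we have $x^{(k)}_t = 0$ for all $t \ge k+1$, and since $k+1 \le K$ this gives $x^{(k)}_t = 0$ for every $t \in \{K, K+1, \dots, n\}$; the third hypothesis then yields $f(x^{(k)}) - \inf_z f(z) \ge \epsilon$, so none of $x^{(0)}, \dots, x^{(K-1)}$ is $\epsilon$-optimal, and hence at least $K$ iterations are needed.

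The only real work is the support claim, which I would prove by induction on $k$. The base case $k=0$ is trivial since $x^{(0)} = \mathbf{0}$. For the inductive step, suppose $x^{(j)}_\ell = 0$ for all $\ell > j$ and all $j \le k$, and fix $i \ge k+2$. For each $j \le k$ the inductive hypothesis gives $x^{(j)}_\ell = 0$ for all $\ell \ge j+1$, so applying the zero-chain property of $f$ with threshold $j+1$ yields $\grad_\ell f(x^{(j)}) = 0$ for all $\ell > j+1$; since $i \ge k+2 \ge j+2$, in particular $\grad_i f(x^{(j)}) = 0$. As this holds for every $j \le k$, the defining property of a zero-respecting algorithm forces $x^{(k+1)}_i = 0$, completing the induction. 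This is precisely Observation~1 of \citet{carmon2017lower}, so one could alternatively just cite it rather than reprove it.

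I do not anticipate a genuine obstacle, since the argument is a short self-contained induction; the one thing to be careful about is the interaction of the two off-by-one thresholds appearing in the definitions of zero-chain and of zero-respecting algorithm (the threshold $t$, the index $j+1$, and the index $k+1$), so that the quantifiers line up correctly. I would also state explicitly that, as is standard for zero-respecting methods and as is the case for the hard instances $\hat f$ constructed above, the algorithm is taken to start at $x^{(0)} = \mathbf{0}$, since the whole argument rests on this initialization.
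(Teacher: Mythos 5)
Your argument is correct and complete, and it handles the off-by-one quantifier interaction between the zero-chain and zero-respecting definitions precisely as needed: the inductive hypothesis $x^{(j)}_\ell = 0$ for $\ell \ge j+1$ feeds into the zero-chain property with threshold $t=j+1$ to give $\grad_i f(x^{(j)}) = 0$ for $i \ge j+2$, and since $i \ge k+2 \ge j+2$ for all $j \le k$, the zero-respecting condition with $t=k$ forces $x^{(k+1)}_i = 0$. You are also right to flag explicitly that the initialization $x^{(0)} = \mathbf{0}$ is essential; this assumption is implicit in the paper's statement (and in the downstream application via \Cref{lem:main-lb-quasar}, where the hard instance has minimizer at $\frac{R}{\sqrt{T}}\mathbf{1}$), but the statement itself does not say so.

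The one substantive difference is that the paper does not give a proof at all: it dismisses the observation as a ``cosmetic modification'' of the corresponding observation in \citet{carmon2017lower}. You instead reprove the support property (their Observation~1) from scratch and then derive the iteration lower bound from it. The two routes are mathematically the same; yours simply has the advantage of being self-contained and of surfacing the initialization assumption and the index-bookkeeping that the paper leaves to the reader to import from the cited reference.
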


\begin{proof}
Cosmetic modification of the proof of Observation~2 in \citep{carmon2017lower}.
\end{proof}

\coroMainLBquasar*

\begin{proof}
Applying Lemma~\ref{lem:main-lb-quasar} and Observation~\ref{obs-zero-respecting} implies this result for any first-order zero-respecting method. Applying Proposition~1 from \citep{carmon2017lower}, which states that lower bounds for first-order zero-respecting methods also apply to deterministic first-order methods, gives the result.
\end{proof}

\subsection{Lower Bounds via Reduction}

\begin{rem}\label{rem:quasar-approx}
If we have an algorithm that can approximately minimize a strongly quasar-convex function, we can use it to approximately minimize a quasar-convex function.
\end{rem}
\begin{proof}
This follows from the fact that if $f$ is $\1$-quasar-convex with respect to a minimizer $\xStar$, then the function $g_{\ep}(x) = f(x) + \ff{\epsilon}{2} \norm{x - x\ind{0}}^2$ is $(\1,\epsilon)$-strongly quasar-convex with respect to $\xStar$ (recall this terminology from Remark~\ref{rem:star_char_gen}). Note that $\xStar$ is not necessarily a minimizer of $g_\ep$, but
$g_\ep(x^{*}) \le f(x^*) + \epsilon R^2/2$, where $R = \norm{x\ind{0} - x^{*}}$. Therefore, if we obtain a point $\tilde{x}$ with
$g_\ep(\tilde{x}) \le \inf_x g(x) + \epsilon R^2/2$, then $f(\tilde{x}) \le g_\ep(\tilde{x}) \le g_\ep(x^*) + \ep / 2 \le f(x^*) + \epsilon R^2$.
\end{proof}

\begin{rem}\label{rem:sc-lower-bound}
Given any deterministic first-order method, there exists an $L$-smooth, $(\1,\2)$-strongly quasar-convex function such that the method requires at least
$\Omega(\max\{\1^{-1} L^{1/2} \mu^{-1/2}, \linebreak \1^{-1} L^{1/2} \mu^{-1/2} \log^+(\ep^{-1}) \})$
gradient evaluations to find an $\epsilon$-optimal point of $f$.
\end{rem}
\begin{proof}
Suppose there was a deterministic first-order method for minimizing $L$-smooth $(\1,\2)$-strongly quasar-convex functions which required $o(\1^{-1}\k^{-1/2})$ gradient evaluations to find an $\ep$-minimizer, where $\k = \ff{L}{\mu}$.
Let $f$ be an $L$-smooth function that is $\1$-quasar-convex with respect to a minimizer $x^*$, let $\ep > 0$, and let $R = \norm{x\ind{0}-x^*}$. Then, the function $g_{\ep/R^2}$ is $(L+\ff{\ep}{R^2})$-smooth and $(\1,\ff{\ep}{R^2})$-strongly quasar-convex with respect to $x^*$ as shown in Remark \ref{rem:quasar-approx}, so the condition number of $g_{\ep/R^2}$ is $\k = 1 + \ff{LR^2}{\ep}$. Thus, we could apply the method to find an $\ff{\ep}{2R^2}$-minimizer of $g_{\ep/R^2}$, and it would do so using $o(\1^{-1} \ceil{L^{1/2} R \ep^{-1/2}})$ gradient evaluations. But an $\ff{\ep}{2R^2}$-minimizer of $g_{\ep/R^2}$ is an $\ep$-minimizer of $f$, as argued in Remark \ref{rem:quasar-approx}; thus, this violates the lower bound on the complexity of minimizing quasar-convex functions shown in Theorem \ref{thm:main-lb-quasar}.

To prove the second part of the lower bound, we first note that any $(\1, \2)$-quasar-convex quadratic is also $(1, (\ff{2}{\1}-1)^{-1}\2)$-quasar-convex and thus $(1, \ff{\1\2}{2})$-quasar-convex, and in fact $\ff{\1\2}{2}$-strongly \emph{convex}; this follows from definitions.
Thus, direct application of the $\Omega((L/\mu)^{1/2} \log^+(\ep^{-1}))$ lower bound on the complexity of finding an $\ep$-minimizer of an $L$-smooth $\mu$-strongly convex quadratic with a deterministic first-order method \cite[Chapter~7]{nemirovsky1979} yields a lower bound of $\Omega (\1^{-1/2}(L/\mu)^{1/2} \log^+(\ep^{-1}))$ on the complexity of first-order minimization of $L$-smooth $(\1,\2)$-quasar-convex functions.
\end{proof}

\end{document}